\definecolor{shadecolor}{gray}{0.875}
\numberwithin{equation}{section}
\theoremstyle{plain}
\newtheorem{prop}{Proposition}[section]
\newtheorem{theo}[prop]{Theorem}
\newtheorem{coro}[prop]{Corollary}
\newtheorem{lemm}[prop]{Lemma}
\theoremstyle{definition}
\newtheorem{defi}[prop]{Definition}
\newtheorem{rema}[prop]{Remark}
\newtheorem{exam}[prop]{Example}
\newtheorem{clai}[prop]{Claim}
\def\ra{\rightarrow}
\def\bR{{\mathbb R}}
\def\bZ{{\mathbb Z}}
\def\Eff{\overline{\mathrm{Eff}}}
\def\Aut{\mathrm{Aut}}
\def\Hilb{\mathrm{Hilb}}
\def\ch{\mathrm{char}}
\def\Spec{\mathrm{Spec}}
\author{Roya Beheshti}
\address{Department of Mathematics \\
Washington University in St.~Louis \\
St. Louis, MO \, \, 63130}
\email{beheshti@wustl.edu}
\author{Brian Lehmann}
\address{Department of Mathematics \\
Boston College  \\
Chestnut Hill, MA \, \, 02467}
\email{lehmannb@bc.edu}
\author{Eric Riedl}
\address{Department of Mathematics \\
University of Notre Dame  \\
255 Hurley Hall \\
Notre Dame, IN 46556}
\email{eriedl@nd.edu}
\author{Sho Tanimoto }
\address{Graduate School of Mathematics, Nagoya University, Furocho Chikusa-ku, Nagoya, 464-8602, Japan}
\address{Institute for Advanced Research, Nagoya University}
\email{sho.tanimoto@math.nagoya-u.ac.jp}
\title[Rational curves on del Pezzo surfaces]{Rational curves on del Pezzo surfaces\\ in positive characteristic}
\begin{document}
\date{\today}

\begin{abstract}
We study the space of rational curves on del Pezzo surfaces in positive characteristic.  For most primes $p$ we prove the irreducibility of the moduli space of rational curves of a given nef class, extending results of Testa in characteristic $0$.  We also investigate the principles of Geometric Manin's Conjecture for weak del Pezzo surfaces. In the course of this investigation, we give examples of weak del Pezzo surfaces defined over $\mathbb F_2(t)$ or $\mathbb{F}_{3}(t)$ such that the exceptional sets in Manin's Conjecture are Zariski dense.
\end{abstract}

\maketitle

\section{Introduction}

Let $S$ be a del Pezzo surface over an algebraically closed field $k$. Let $\overline{M}_{0,0}(S)$ be the Kontsevich moduli space of stable maps of genus $0$ and let $\overline{\mathrm{Rat}}(S)$ denote the union of the irreducible components of $\overline{M}_{0,0}(S)$ which generically parametrize stable maps from irreducible domains. (Here we endow each component with its reduced structure.)  We are interested in the ``discrete'' invariants of $\overline{\mathrm{Rat}}(S)$: the number of irreducible components of a given degree, the dimension of the components, and so on.

In characteristic $0$, the behavior of these invariants is predicted by Geometric Manin's Conjecture as formulated in \cite{LTCompos}.  \cite{Testathesis} and \cite{Testa09} classified the components of $\overline{\mathrm{Rat}}(S)$ for ``most'' del Pezzo surfaces; in particular, Testa's work verifies Geometric Manin's Conjecture for such surfaces.  (As a secondary result, in this paper we extend Testa's classification to all del Pezzo surfaces in characteristic $0$.)

Our main focus is del Pezzo surfaces in characteristic $p$.   In particular, we would like to analyze whether the framework of Geometric Manin's Conjecture can be extended to cover such surfaces.  We classify the components of $\overline{\mathrm{Rat}}(S)$ for ``most'' del Pezzo surfaces in characteristic $p$ and verify that the principles of Geometric Manin's Conjecture hold in these examples.

\subsection{Summary of main results}
Our first statement addresses the components of $\overline{\mathrm{Rat}}(S)$ which have larger than the expected dimension.  We show that most weak del Pezzo surfaces do not carry any dominant families of this type.

\begin{theo} \label{theo:maintheorem1}
Let $S$ be a weak del Pezzo surface over an algebraically closed field such that a general member of $|-K_{S}|$ is smooth.  Then the only components of $\overline{\mathrm{Rat}}(S)$ which have greater than the expected dimension will parametrize multiple covers of rational curves $C$ satisfying $-K_{S} \cdot C \leq 1$.  In particular, there are no dominant families of rational curves with larger than the expected dimension.
\end{theo}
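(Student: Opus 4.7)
The plan is to split the argument into three parts: (1) a dimension comparison that reduces the theorem to the case of birational maps, (2) a deformation-theoretic criterion identifying when a birational component can have excess dimension, and (3) using the smooth anticanonical hypothesis to rule out the remaining cases.

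\textbf{Step 1 (Reduction to birational maps).} Let $M$ be a component of $\overline{\mathrm{Rat}}(S)$ of larger-than-expected dimension, with general element $f \colon \mathbb{P}^1 \to S$. Factoring $f$ through the normalization of its image yields $f = g \circ h$, with $g \colon \mathbb{P}^1 \to S$ birational onto its image $C$ of class $\beta$ and $h \colon \mathbb{P}^1 \to \mathbb{P}^1$ of degree $e \geq 1$. Let $M'$ be the component of $\overline{\mathrm{Rat}}(S)$ containing $g$; the fiber of $M \to M'$ over $[g]$ has dimension $2e - 2$ (degree-$e$ self-covers of $\mathbb{P}^1$ modulo source automorphisms), so
\[
\dim M = \dim M' + (2e - 2), \qquad \mathrm{exp}(M) = e(-K_S \cdot \beta) - 1.
\]
Rearranging,
\[
\dim M - \mathrm{exp}(M) = \bigl(\dim M' - \mathrm{exp}(M')\bigr) + (e - 1)\bigl(2 - (-K_S \cdot \beta)\bigr).
\]
If $M$ has excess while $M'$ does not, then $(e-1)(2 - (-K_S \cdot \beta)) > 0$, forcing $e \geq 2$ and $-K_S \cdot \beta \leq 1$: exactly the multiple-cover exception of the theorem. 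So it remains to show that no birational component $M'$ with $-K_S \cdot \beta \geq 2$ has larger-than-expected dimension.

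\textbf{Step 2 (Deformation-theoretic criterion).} For such a birational $g$, write $g^* T_S \cong \mathcal{O}(a) \oplus \mathcal{O}(b)$ with $a \geq b$ and $a + b = -K_S \cdot \beta$. If $b \geq -1$, then $H^1(\mathbb{P}^1, g^* T_S) = 0$, so $\mathrm{Mor}(\mathbb{P}^1, S)$ is smooth at $g$ of dimension $\chi(g^* T_S) = -K_S \cdot \beta + 2$, and consequently $\dim_{[g]} M' = -K_S \cdot \beta - 1$ equals the expected value. Therefore excess in $M'$ forces the general $g$ to satisfy $b \leq -2$; in that range $H^1(g^* T_S) \neq 0$ and the deformations of $g$ are genuinely obstructed, so the local dimension of $M'$ could in principle exceed the expected one.

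\textbf{Step 3 (Main obstacle).} The hard part is to rule out, under the hypothesis $-K_S \cdot \beta \geq 2$, the existence of a birational $g$ with $b \leq -2$ sitting in an excess-dimensional component. This is where the hypothesis that a general $D \in |-K_S|$ is smooth becomes essential: $D$ is a smooth genus-$1$ curve, and for general $g$ the preimage $g^{-1}(D) \subset \mathbb{P}^1$ is a reduced divisor of degree $-K_S \cdot \beta \geq 2$. Pulling back the short exact sequence
\[
0 \to \Omega_S^1 \to T_S \to T_S|_D \to 0
\]
(coming from $\Omega_S^1 \cong T_S \otimes K_S$ for the rank-$2$ bundle $T_S$ together with $D \sim -K_S$) along $g$, and using that $T_D$ has degree $0$ while $N_{D/S} \cong \mathcal{O}_D(-K_S)$ has positive degree on the elliptic curve $D$, I would aim to constrain the splitting type $(a, b)$ and force $b \geq -1$. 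The main technical challenge is carrying this out in positive characteristic, where inseparable behavior, Frobenius pullbacks, or the failure of Kodaira vanishing could otherwise interfere; the smoothness of a general anticanonical member is precisely the input that eliminates such pathologies geometrically. As a fall-back, one could instead attempt a bend-and-break argument: an excess-dimensional family of birational rational curves would pass through more general points than expected, and degenerating yields reducible configurations whose components are classified among the finitely many negative-intersection curves on a weak del Pezzo, once again forcing $-K_S \cdot \beta \leq 1$.
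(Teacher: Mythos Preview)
Your Steps 1 and 2 are fine, and Step 1 in particular is a clean reduction that the paper leaves implicit. The gap is in Step 3, where neither of your two suggestions is carried far enough to constitute a proof.

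The anticanonical restriction sequence does not constrain the splitting type. Pulling back $0 \to \Omega_S^1 \to T_S \to T_S|_D \to 0$ along a birational $g\colon \mathbb{P}^1 \to S$ whose image meets $D$ properly gives
\[
0 \to g^*T_S \otimes \mathcal{O}(-d) \to g^*T_S \to \mathcal{T} \to 0
\]
with $d = -K_S \cdot \beta$ and $\mathcal{T}$ torsion of length $2d$. If $g^*T_S = \mathcal{O}(a)\oplus\mathcal{O}(b)$, this reads $0 \to \mathcal{O}(a-d)\oplus\mathcal{O}(b-d) \to \mathcal{O}(a)\oplus\mathcal{O}(b) \to \mathcal{T} \to 0$, which is consistent for \emph{every} pair $(a,b)$ with $a+b=d$. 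The smoothness of $D$ tells you nothing about $(a,b)$ through this sequence; the information about $T_D$ and $N_{D/S}$ lives on $D$, not on the image of $g$, and does not survive the pullback.

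Your fall-back is in fact what the paper does, but the substance lies entirely in the base case, which you have not addressed. The induction via Bend-and-Break (Lemma~\ref{lemm:weakmbbfordpsurfaces} and Proposition~\ref{prop:expecteddim}) reduces everything to showing that every \emph{dominant} birational component in anticanonical degree $\leq 2$ has the expected dimension. For degree $0$ and $1$ this is a short Hodge Index/adjunction argument (Lemmas~\ref{lemm:(-2)-curve}, \ref{lemm:lines}), and the smoothness hypothesis on $|-K_S|$ enters already here to exclude a quasi-elliptic pencil when $d=1$. Degree $2$ is the real work: one must analyze rational members of $|-K_S|$ (degree $2$) and of $|-2K_S|$ (degree $1$), and in characteristic $2$ this is a lengthy explicit computation with the equations of the anticanonical model (Lemma~\ref{lemm:degree2curves}, several pages). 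The hypothesis that a general anticanonical curve is smooth is used precisely to rule out the pathological surfaces where this base case fails (Corollary~\ref{coro:lowdegreeexpectdimanticanonical}). Without that low-degree analysis the Bend-and-Break induction has no anchor, and your proposal does not supply it.
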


The weak del Pezzo surfaces $S$ for which Theorem \ref{theo:maintheorem1} does not apply -- that is, the surfaces $S$ such that every member of $|-K_{S}|$ is singular -- are classified by \cite{KN20}.  There are 3 infinite families and 11 sporadic examples; these examples only occur in characteristic $2$ or $3$ and when $S$ has degree at most $2$.   Note that for such a surface $S$ the curves in $|-K_{S}|$ are all rational and thus Theorem \ref{theo:maintheorem1} must fail.  We will discuss these examples in more depth in Section \ref{subsec:examples} and Section \ref{sect:increaseainv}.

Under more restrictive conditions, we show the stronger result that there are no dominant irreducible components of $\overline{\mathrm{Rat}}(S)$ yielding an inseparable family of rational curves.

\begin{theo} \label{theo:maintheorem2}
Let $S$ be a smooth del Pezzo surface of degree $d$ over an algebraically closed field $k$ of characteristic $p$. Assume that either $d \geq 2$ or $d = 1$ and $p \geq 11$.   When $d = 3$, we assume furthermore that $S$ is not the following exception:
\begin{enumerate}
\item $\ch(k)=2$ and $S$ is the Fermat cubic surface $x^{3}+y^{3}+z^{3} + w^3=0$.
\end{enumerate}
When $d = 2$, we assume furthermore that $S$ is not one of the following list of exceptions:
\begin{enumerate}
\setcounter{enumi}{1}
\item $\ch(k)=3$ and $S$ is the double cover of $\mathbb{P}^{2}$ ramified along the Klein quartic curve, i.e., the curve defined by $zx^{3} + xy^{3} + yz^{3}=0$.
\item $\ch(k)=2$ and $S$ is a double cover of $\mathbb{P}^{2}$ defined by the equation $w^{2} + wy^2 + g_{4}$ where $g_4$ is a homogeneous polynomial in $x, y, z$.
%\item $\ch(k)=2$ and $S$ is the blow-up of the Fermat cubic surface.
\end{enumerate}
Then every dominant component of $\overline{\mathrm{Rat}}(S)$ is separable and generically parametrizes free rational curves.
\end{theo}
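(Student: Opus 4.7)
My plan is to combine Theorem~\ref{theo:maintheorem1} with a Frobenius-descent argument. Under the hypotheses of the theorem $S$ has a smooth anticanonical member, so Theorem~\ref{theo:maintheorem1} applies: in particular every dominant component $M\subset\overline{\mathrm{Rat}}(S)$ has the expected dimension $-K_S\cdot\alpha - 1$, where $\alpha$ is the class of $M$, since multiple-cover components of a fixed rational curve cannot sweep out a surface. On a smooth surface, a dominant expected-dimensional family with separable general member is automatically generically free, so the entire content of the theorem is the separability of such dominant components.

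Assume for contradiction that $M$ is a dominant component whose general member $f\colon\mathbb{P}^1 \to S$ is inseparable, and factor $f = g \circ F^r$ with $g$ separable and $F$ the Frobenius of $\mathbb{P}^1$. The assignment $f\mapsto g$ identifies $M$ with an irreducible subvariety $N$ of $\overline{\mathrm{Rat}}(S, \alpha')$, where $\alpha' = \alpha/p^r$; its dimension $\dim N = \dim M = p^r(-K_S\cdot\alpha')-1$ strictly exceeds the expected dimension $-K_S\cdot\alpha'-1$ of any component in class $\alpha'$. Hence the containing component $M'$ of $N$ is larger than expected, so by Theorem~\ref{theo:maintheorem1} $M'$ must parametrize multiple covers of a rational curve $D$ with $-K_S\cdot D \leq 1$. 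Since the images $g(\mathbb{P}^1) = f(\mathbb{P}^1)$ sweep out $S$, the $D$'s must themselves move in a positive-dimensional family covering $S$.

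The remaining step is to classify smooth del Pezzo surfaces carrying a sweeping family of rational curves $D$ with $-K_S\cdot D = 1$ (equality, by ampleness of $-K_S$). Adjunction forces $D^2\in\{-1,1\}$: either $D$ is a $(-1)$-curve, which is rigid on a smooth del Pezzo and so cannot move, or $D$ has $D^2 = 1$ and arithmetic genus $1$, so $D$ is a singular (nodal or cuspidal) rational member of an arithmetic-genus-one pencil inside $|-K_S|$ (for $d \in \{1,2\}$) or inside a suitable conic pencil (for $d \in \{2,3\}$). The existence of such a sweeping pencil of rational curves translates into a Frobenius-type vanishing condition on the partial derivatives of the defining equation of $S$. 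Checking this case-by-case picks out exactly the exceptions listed in the statement: in characteristic $2$ the Fermat cubic for $d=3$ and the Artin--Schreier family $w^2+wy^2+g_4=0$ for $d=2$; in characteristic $3$ the Klein double cover for $d=2$; and for $d=1$ a list of small-characteristic exceptions that is eliminated by the hypothesis $p\geq 11$.

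The main obstacle I anticipate is the final classification: carrying out the partial-derivative computation on each smooth del Pezzo surface of degree $d \in \{1,2,3\}$ and verifying that no surface outside the listed exceptions admits a sweeping pencil of singular rational curves. This is especially delicate in degree $1$, where the anticanonical system has a base point and the interplay between the characteristic and the defining polynomial produces the full collection of sporadic failures; pinning down the sharp threshold $p\geq 11$ appears to require either a uniform Bogomolov--Miyaoka--Yau-type bound for $p$-closed foliations with rational leaves, or an exhaustive Frobenius-invariant vector-field analysis on each relevant degree-$1$ del Pezzo.
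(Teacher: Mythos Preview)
Your argument rests on a conflation of two different notions of ``separability.'' The statement that a dominant component $M$ is \emph{separable} means that the evaluation map from the universal family over $M$ to $S$ is separable; for a surface this is equivalent to the general curve parametrized by $M$ being free. It does \emph{not} mean that the general stable map $f\colon\mathbb{P}^1\to S$ is separable as a morphism. In fact, the general member of an inseparable dominant family is typically a \emph{birational} map onto a singular rational curve in $S$ (for instance a cuspidal anticanonical curve), and a birational map has degree~$1$ and so is trivially separable. Your Frobenius factorization $f = g\circ F^r$ therefore produces $r=0$ and $g=f$ and gives no information.

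Concretely, look at the Fermat cubic exception: the inseparable family consists of normalizations $f\colon\mathbb{P}^1\to C\subset S$ of cuspidal hyperplane sections. Each such $f$ is birational; the inseparability of the family is detected by the torsion in the normal sheaf $N_f\cong\mathcal{O}(-1)\oplus k[t]/(t^2)$, not by any Frobenius factor in $f$ itself. So the dimension count $\dim N = p^r(-K_S\cdot\alpha')-1$ never gets off the ground.

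The paper's route is quite different. One first proves separability (equivalently, freeness) directly for dominant families of anticanonical degree $\leq 3$: for $d\geq 2$ this is a case-by-case normal sheaf computation isolating exactly the listed exceptions (Lemmas~\ref{lemm:degree2curvessep} and~\ref{lemm:wdphighchar}), and for $d=1$ with $p\geq 11$ it uses a bound on the arithmetic genus of low-degree curves together with the deformation-theoretic input of \cite{IIL20} (Theorem~\ref{theo:separabilitylargechar}). One then runs a Bend-and-Break induction (Lemma~\ref{lemm:weakmbbfordpsurfaces} and Proposition~\ref{prop: free}): a dominant family of degree $\geq 4$ degenerates to a union of two general free curves of lower degree, and semicontinuity of $h^1$ propagates freeness upward. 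The role of the threshold $p\geq 11$ is to guarantee that the arithmetic genus of any $-K_S$-cubic on a degree-$1$ surface is below $(p-1)/2$, which is what \cite{IIL20} needs; it is not obtained from any foliation or BMY-type bound.
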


\begin{rema}
The surfaces described in the exceptions above do actually contain a dominant inseparable family of rational curves.
\end{rema}

\begin{rema}
It is interesting to note that the exceptional cases in Theorem \ref{theo:maintheorem2} are exactly the del Pezzo surfaces of degree $\geq 2$ that are not Frobenius split.  \cite[Example 5.5]{Hara98} shows that the Fermat cubic surface in characteristic $2$ is the unique smooth cubic surface that is not F-split.  \cite[Theorem 0.3]{Saito17} shows that the smooth degree 2 del Pezzos that are not F-split are the double cover of $\mathbb{P}^{2}$ branched over the Fermat quartic in characteristic $3$ and the double covers of $\mathbb{P}^{2}$ branched over the double line in characteristic $2$.  (The Fermat quartic in $\mathbb{P}^{2}$ in characteristic $3$ is projectively equivalent to the equation given above by \cite[Proposition 3.7]{Pardini86}. See also \cite[Formula (1.11)]{Elkies}.)
\end{rema}

If we impose some further restrictions on the characteristic then we can completely classify components of $\overline{\mathrm{Rat}}(S)$.  Let $\delta(d)$ be the function defined by
\begin{equation}
\label{delta}
\delta(d) = 
\begin{cases}
2 & \text{ if $d \geq 4$}\\
3 & \text{ if $d = 2, 3$}\\
11 & \text{ if $d = 1$}.
\end{cases}
\end{equation}
The following statement extends results of \cite{Testathesis} and \cite{Testa09} to positive characteristic: 

\begin{theo} \label{theo:maintheorem3}
Let $S$ be a smooth del Pezzo surface of degree $d$ over an algebraically closed field $k$ of characteristic $p$. Assume that $p \geq \delta(d)$. Furthermore when $d = 2$, we assume that $S$ is not isomorphic to the surface listed in Theorem~\ref{theo:maintheorem2}.(2).

Let $\beta$ be a nef class on $S$ satisfying $-K_S \cdot \beta \geq 3$.  Then:  
\begin{itemize}
\item If $\beta$ is not a multiple of a $-K_{S}$-conic, then there is a unique component $M$ of $\overline{M}_{0,0}(S,\beta)$ generically parametrizing stable maps with irreducible domains.  The general map parametrized by $M$ is a birational map onto a free curve.
\item If $\beta$ is a multiple of a smooth rational conic, then there is a unique component $M$ of $\overline{M}_{0,0}(S,\beta)$ generically parametrizing stable maps with irreducible domains.  The general map parametrized by $M$ is a finite cover of a smooth conic.
\item If $d=2$ and $\beta$ is a multiple of $-K_{S}$, or $d = 1$ and there is a contraction of a $(-1)$-curve $\phi: S \to S'$ such that $\beta$ is a multiple of the  pullback of $-K_{S'}$, then there are exactly two components of $\overline{M}_{0,0}(S,\beta)$ parametrizing stable maps with irreducible domains.  One component generically parametrizes birational maps onto free curves, the other generically parametrizes multiple covers of $-K_S$-conics.
\item If $d=1$ and $\beta$ is a multiple of $-2K_{S}$, then there are at least two components of $\overline{M}_{0,0}(S,\beta)$ parametrizing stable maps with irreducible domains.  There is a unique component that generically parametrizes birational maps onto free curves, and the other components generically parametrize multiple covers of $-K_S$-conics.
\end{itemize}
\end{theo}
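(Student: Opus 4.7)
The plan is to argue by induction on the anticanonical degree $a := -K_S \cdot \beta \geq 3$, decomposing components of $\overline{M}_{0,0}(S,\beta)$ into a dominant part (analyzed via movable bend-and-break) and a non-dominant part (analyzed by dimension count). Let $M$ be a component generically parametrizing stable maps with irreducible domain. If $M$ is dominant, then Theorem \ref{theo:maintheorem2} gives separability and free generic members, so $M$ has the expected dimension $a - 1$. If $M$ is non-dominant, its generic member factors through a proper rational subvariety $C \subset S$; an $m$-fold cover family over rational curves $C$ varying in an $e$-dimensional family has dimension $e + 2m - 2$, which matches the expected dimension $ma_C - 1$ (where $a_C := -K_S \cdot C$) precisely when $e = m(a_C - 2) + 1$. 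For $m \geq 2$ this forces $a_C = 2$ with $e = 1$, i.e., covers of rational $-K_S$-conic classes: conic fibers, the anticanonical class on $d = 2$ surfaces, its pullback $\phi^*(-K_{S'})$ on $d = 1$ surfaces, and the class $-2K_S$ on $d = 1$ surfaces (whose $|{-2K_S}|$-system has a $1$-dimensional locus of rational members). By Theorem \ref{theo:maintheorem1}, larger-than-expected non-dominant components come from covers of curves with $a_C \leq 1$; for nef $\beta$ these can only be rational members of $|-K_S|$ on a $d = 1$ surface, covered to high multiplicity.

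To prove uniqueness of the dominant component (first, third, and fourth bullets) or its nonexistence (second bullet), I would apply movable bend-and-break. A general free curve $f\colon \mathbb{P}^1 \to S$ in $M$ with $a \geq 4$ passing through two general points degenerates within $\overline{M}_{0,0}(S,\beta)$ to a nodal stable map $f_1 \cup_p f_2$ with classes $\beta_1 + \beta_2 = \beta$, each $\beta_i$ nef of positive anticanonical degree. Separability, supplied by Theorem \ref{theo:maintheorem2}, is exactly what permits this Mori-style degeneration in characteristic $p$. Conversely, the gluing of two free curves at a general point is unobstructed because both normal bundles have vanishing $H^1$, so it smooths to an irreducible free curve of class $\beta$. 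By induction each dominant component for $\beta_i$ is unique, so the locus of smoothable gluings is intrinsic to $\beta$; two dominant components of class $\beta$ therefore share this dense locus and coincide. The base case $a = 3$ is handled by explicit classification of $\beta$ with $-K_S \cdot \beta = 3$, which is either a twisted-cubic-type class or a small sum such as a $-K_S$-line plus a $-K_S$-conic.

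For the second bullet, $\beta = n\gamma$ with $\gamma^2 = 0$: an irreducible rational curve of class $\beta$ has self-intersection zero and therefore factors through the conic bundle projection associated with $|\gamma|$, ruling out a dominant component and leaving the multiple cover family as the unique component. For the third bullet, the dominant component exists by the induction, and the second component consists of multiple covers of the $1$-parameter family of rational members of $|-K_S|$ on $d = 2$, or of $|\phi^*(-K_{S'})|$ on $d = 1$. For the fourth bullet, the dominant component again exists by induction, while the additional components include $n$-fold covers of rational members of $|{-2K_S}|$ (an expected-dimension family) and $(2n)$-fold covers of rational members of $|-K_S|$ (an over-dimensional family permitted by Theorem \ref{theo:maintheorem1}).

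The hard part is making movable bend-and-break effective and enumerated correctly across all admissible nef splittings $\beta = \beta_1 + \beta_2$. One needs (i) separability at the general free member of $M$, which Theorem \ref{theo:maintheorem2} supplies under $p \geq \delta(d)$, and (ii) a splitting of $\beta$ into nef classes of strictly smaller anticanonical degree whose gluings deform into $M$ rather than into a multiple-cover component. Verifying (ii) is the main technical burden, particularly for the exceptional classes in bullets (2)--(4), where the nef cone meets the anticanonical and conic-bundle classes in a way that requires careful combinatorial control; one expects to argue that after possibly iterating the bend-and-break once more, two general free curves can be expressed as gluings of anticanonical-degree-$1$ or $-2$ pieces whose moduli are already classified (lines, conics, and the explicit degree-$3$ classes from the base case).
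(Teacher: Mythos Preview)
Your proposal follows a natural ``glue-and-smooth'' strategy, but it leaves open the exact step you flag at the end, and the paper resolves the irreducibility question by an entirely different route.

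The gap is in the uniqueness argument. Bend-and-break applied to a dominant component $M$ produces \emph{some} reducible stable map $f_1 \cup f_2$ with $\beta_1 + \beta_2 = \beta$, but it does not tell you \emph{which} splitting $(\beta_1,\beta_2)$ occurs. So even granting by induction that for each fixed pair $(\beta_1,\beta_2)$ the gluing locus is irreducible and lies in the smooth locus (hence in a single component), two distinct components $M,M'$ could a priori degenerate into gluing loci for different splittings. To close the argument you would need to connect all of these gluing loci to one another inside $\overline{M}_{0,0}(S,\beta)$ --- essentially a monodromy/connectedness statement on the boundary --- and that is not supplied. Your base case $a=3$ is also not as concrete as suggested: on a degree $1$ surface there are many nef classes of anticanonical degree $3$ (e.g.\ $-3K_S$ and the various $\phi^*(-K_{S'})+E$), and the paper does not dispose of these by direct classification.

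The paper avoids the splitting bookkeeping altogether by a specialization argument. Fixing $e-2$ general points, Lemma~\ref{curve-general_p} shows (by an induction whose engine is the normal-sheaf calculations of Theorem~\ref{thm-normalBundleNodalCurve} together with the transversality Lemmas~\ref{lemm: line_conic} and~\ref{lemm: conic_conic}) that the $1$-dimensional locus $B \subset \overline{M}^{bir}(S,\beta)$ of maps through those points lies entirely in the smooth locus of $\overline{M}_{0,0}(S,\beta)$. One then lifts $S$ over a complete local ring to characteristic $0$, where $\overline{M}^{bir}$ is already known to be irreducible (Testa for $d \geq 2$; Section~\ref{sect:dpchar0} for $d=1$). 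The family of such $1$-dimensional loci over the base has connected geometric generic fibre, all components dominate the base by a dimension count, and Stein factorization forces the special fibre $B$ to be connected as well; since $B$ sits in the smooth locus, $\overline{M}^{bir}(S,\beta)$ is irreducible. Existence of a free curve in each nef class is handled separately by Proposition~\ref{prop:existfreecurve}. This sidesteps the enumeration of splittings entirely, at the cost of importing the characteristic-$0$ classification.
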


Finally, we extend the results of \cite{Testathesis} to finish the classification of components of $\overline{\mathrm{Rat}}(S)$ of anticanonical degree $\geq 3$ on arbitrary del Pezzo surfaces $S$ of degree $1$ in characteristic $0$.  Theorem \ref{theo:dp1char0classification} finishes the proof of the analogue of Theorem \ref{theo:maintheorem3} in characteristic $0$; in particular:

\begin{theo}
\label{theo:degree1char0}
Let $S$ be a smooth del Pezzo surface of degree $1$ over an algebraically closed field of characteristic $0$.  Let $\alpha$ be a nef curve class on $S$ satisfying $-K_{S} \cdot \alpha \geq 3$ which is not a multiple of a smooth rational conic.  Then $\overline{\mathrm{Rat}}(S)$ contains a unique component generically parametrizing birational maps onto free curves of numerical class $\alpha$.
\end{theo}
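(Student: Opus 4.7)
The plan is to extend the inductive methodology of \cite{Testathesis,Testa09} to cover the degree $1$ classes in characteristic $0$ that fall outside the scope of Testa's original classification. I would argue by induction on the anticanonical degree $-K_S \cdot \alpha$, with the inductive step driven by a Movable Bend-and-Break argument on free rational curves of class $\alpha$. The base case is $-K_S \cdot \alpha = 3$, which requires a direct analysis using the elliptic fibration structure given by $|-K_S|$ and the configuration of $(-1)$-curves; in particular, one must handle the classes that pull back via $\phi^{*}$ from a blow-down $\phi \colon S \to S'$ of a $(-1)$-curve to a degree $2$ del Pezzo, invoking the analogue of Theorem~\ref{theo:maintheorem3} in characteristic $0$ on $S'$.

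For existence, given $\alpha$ with $-K_S \cdot \alpha \geq 4$, I would decompose $\alpha = \alpha_1 + \alpha_2$ as a sum of nef classes with $-K_S \cdot \alpha_i < -K_S \cdot \alpha$, each $\alpha_i$ representable by a free rational curve. When $-K_S \cdot \alpha_i$ is small enough to fall outside the inductive hypothesis (for example $-K_S \cdot \alpha_i = 1$ or $2$), one uses the explicit classification of free rational curves in low anticanonical degree. Gluing two general members at a point produces a nodal genus $0$ stable map; standard smoothing (using that the gluing point is general and each component is free) produces a free rational curve of class $\alpha$, establishing existence of a component of the desired type.

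For uniqueness, suppose $M_{1}, M_{2}$ are two components of $\overline{\mathrm{Rat}}(S)$ both generically parametrizing birational maps onto free curves of class $\alpha$. I would apply Movable Bend-and-Break to the general member of each $M_{i}$ to produce a codimension-one boundary locus in $M_{i}$ parametrizing nodal stable maps whose components have classes $\alpha_{1}, \alpha_{2}$ satisfying $\alpha_{1} + \alpha_{2} = \alpha$ and each $-K_S \cdot \alpha_{j}$ strictly smaller than $-K_S \cdot \alpha$. I would select the decomposition so that neither $\alpha_{j}$ is a multiple of a smooth rational conic nor falls into the excluded cases of the inductive hypothesis. The inductive hypothesis then shows that each $\alpha_{j}$ lies in a unique component generically parametrizing free curves, so the boundary locus determines $M_{i}$ uniquely, which forces $M_{1} = M_{2}$.

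The hard part is verifying that, for every $\alpha$ covered by the theorem, one can choose an admissible bend-and-break decomposition $\alpha = \alpha_{1} + \alpha_{2}$ satisfying all the required constraints simultaneously: both $\alpha_{j}$ should be nef and represented by free rational curves, neither should be a multiple of a conic class (unless already handled), and neither should coincide with the pathological classes (multiples of $-K_S$ or pullbacks of $-K_{S'}$) where Theorem~\ref{theo:maintheorem3} produces extra components of $\overline{\mathrm{Rat}}$. The combinatorics are most delicate when $\alpha$ itself is close to a conic multiple or close to a multiple of $-K_{S}$, and one must argue case by case using the rich structure of $(-1)$-curves and $-K_S$-conics on a degree $1$ del Pezzo that at least one admissible splitting always exists.
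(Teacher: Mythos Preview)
Your approach is fundamentally different from the paper's, and it has a real gap in the uniqueness argument.

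The paper does \emph{not} proceed by induction on $-K_S\cdot\alpha$ via Bend-and-Break decompositions. Instead it uses a specialization argument: Testa already proved the result for a \emph{general} degree $1$ del Pezzo surface, and the paper deforms from the general case to an arbitrary one. Concretely, one fixes $e-2$ general points on $S$ (where $e=-K_S\cdot\alpha$), considers the $1$-dimensional locus $B\subset\overline{M}^{bir}(S,\alpha)$ of curves through these points, and shows two things: (i) $B$ lies in the smooth locus of $\overline{M}_{0,0}(S,\alpha)$ (Lemma~\ref{curve-general}, a direct normal-sheaf computation), and (ii) $B$ is connected (Proposition~\ref{connected-curve}). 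Connectedness is the key step: one places $S$ in a family over the parameter space of $8$-point configurations in $\mathbb{P}^2$, knows connectedness over a general point by Testa plus the Fulton--Lazarsfeld connectedness theorem, and then uses Stein factorization to transport connectedness to every fibre. Since $B$ is connected and lies in the smooth locus, all components of $\overline{M}^{bir}(S,\alpha)$ coincide.

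Your uniqueness argument contains the phrase ``I would select the decomposition so that\ldots'', and this is where it breaks. Bend-and-Break does not let you select the numerical classes $\alpha_1,\alpha_2$ of the pieces: it hands you \emph{some} decomposition, and you must work with whatever it produces. If the pieces happen to be a $-K_S$-line or a multiple of a conic class, the inductive hypothesis does not apply and the argument stalls. To make the induction go through you would need to show that \emph{every} decomposition that Bend-and-Break can produce has pieces with unique free-curve components, or else argue that any two decompositions are linked through a chain of smooth points of $\overline{M}_{0,0}(S)$. You acknowledge this difficulty in your last paragraph but do not resolve it; on a degree $1$ del Pezzo the combinatorics of nef classes near the boundary of the nef cone make this a genuinely hard case analysis, and it is precisely what the paper's specialization argument is designed to avoid.
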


\subsection{Geometric Manin's Conjecture}  
In his unpublished notes \cite{Bat88}, Batyrev developed a heuristic for Manin's conjecture for Fano varieties over finite fields.  (This heuristic inspired Batyrev-Manin's conjecture over number fields as formulated in the series of papers \cite{BM}, \cite{Peyre}, \cite{BT98}, \cite{LST18}.)  Batyrev's heurstic relies on several geometric assumptions about the structure of the moduli space of curves on a Fano variety.  When working over an algebraically closed field of  characteristic $0$, these assumptions were further revised and were systematized as Geometric Manin's Conjecture in \cite{LTCompos}.

Our main motivation for this paper is to test whether the principles of Geometric Manin's Conjecture hold for surfaces in characteristic $p$.  In brief, Geometric Manin's Conjecture predicts that the discrete invariants of $\overline{\mathrm{Rat}}(S)$ -- the number of components and their dimensions -- are controlled by a geometric quantity known as the Fujita invariant. 

\begin{defi}
Suppose that $X$ is a smooth projective variety over an algebraically closed field equipped with a nef divisor $L$.  The Fujita invariant $a(X,L)$ is defined as follows.  If $L$ is not big, we set $a(X,L) = \infty$.  Otherwise, we define
$$
a(X, L) := \min \{ t\in \bR \mid K_{X} + tL \textrm{ is pseudo-effective} \}.
$$
\end{defi}

\begin{defi}
Let $X$ be a smooth weak Fano variety over an algebraically closed field.  We say that a generically finite morphism $f: Y \to X$ from a smooth projective variety $Y$ is a breaking map if $a(Y,-f^{*}K_{X}) > a(X,-K_{X})$.
\end{defi}

Suppose that $X$ is a weak Fano variety over an algebraically closed field of characteristic $0$.  \cite[Theorem 1.1]{LTCompos} proves that for any component $M$ of $\overline{\mathrm{Rat}}(X)$ with larger than the expected dimension, there is  a breaking map $f: Y \to X$ and a component $N$ of $\overline{\mathrm{Rat}}(Y)$ such that pushforward under $f$ maps $N$ birationally onto $M$.

The techniques used to prove \cite[Theorem 1.1]{LTCompos} do not work in characteristic $p$.  The main new obstruction is the existence of inseparable maps. On the one hand, inseparable maps provide new ``unexpected'' examples of dominant breaking maps.  On the other hand, inseparable maps provide new ``unexpected'' families of rational curves.  The key question is whether such phenomena match up to preserve the relationship between the two.

The following theorem verifies this correspondence for surfaces in characteristic $p$.

\begin{theo} \label{theo:breakingandhigherdim}
Let $S$ be a weak del Pezzo surface over an algebraically closed field of characteristic $p$.  Then the following are equivalent:
\begin{enumerate}
\item $S$ admits a dominant family of rational curves with larger than the expected dimension.
\item There is a dominant breaking map $f: Y \to S$.
\end{enumerate}
\end{theo}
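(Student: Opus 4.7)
I will verify the two implications separately. For $(2) \Rightarrow (1)$, suppose $f: Y \to S$ is a dominant breaking map and set $L = -f^*K_S$. First observe that $f$ must be inseparable: if $f$ were separable then $K_Y = f^*K_S + R$ with $R$ the effective ramification divisor, so $K_Y + L = R$ is effective, hence pseudo-effective, contradicting $a(Y,L) > 1$. Since $a(Y,L) > 0$ we also have that $K_Y$ is not pseudo-effective, so $Y$ is uniruled (Mori's theorem in positive characteristic applies for surfaces). Because $K_Y + L$ is not pseudo-effective and $L$ is nef, the Mori cone $\overline{NE}(Y)$ contains a $K_Y$-negative extremal ray $R_0$ with $(K_Y + L) \cdot R_0 < 0$. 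If $R_0$ is generated by a $(-1)$-curve $E$, then $-1 + L \cdot E < 0$ forces $L \cdot E = 0$, so $L$ descends under the contraction of $E$ and the procedure iterates. After finitely many steps we arrive at a smooth surface $Y'$ admitting a Mori fibration whose fibers give a covering family of rational curves $C$ on $Y'$ with $-K_{Y'} \cdot C > L' \cdot C$. Strict transforms of general fibers produce a covering family $\widetilde{C}$ on $Y$ with $-K_Y \cdot \widetilde{C} > L \cdot \widetilde{C}$, and since $f$ is generically finite, pushing this family forward along $f$ yields a dominant family of rational curves on $S$ of dimension at least $-K_Y \cdot \widetilde{C} - 1$, strictly exceeding the expected dimension $L \cdot \widetilde{C} - 1 = -K_S \cdot f_*\widetilde{C} - 1$.

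For $(1) \Rightarrow (2)$, assume $S$ carries a dominant component of $\overline{\mathrm{Rat}}(S)$ of larger than expected dimension. Theorem \ref{theo:maintheorem1} rules this out when a general member of $|-K_S|$ is smooth, so every member of $|-K_S|$ must be singular. The classification in \cite{KN20} places such an $S$ in one of three infinite families or eleven sporadic surfaces, all of degree $\leq 2$ in characteristic $2$ or $3$. For each surface on that list I would exhibit an explicit dominant purely inseparable cover $f: Y \to S$ -- arising, for instance, from the foliation defined by the tangent directions of the singular pencil $|-K_S|$, or from an inseparable double cover suggested by the defining equation of $S$ -- and compute $a(Y, -f^*K_S)$ directly to see that it exceeds $1$.

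The main obstacle is the case analysis required in $(1) \Rightarrow (2)$: there is no uniform recipe for the breaking cover across the sporadic examples of \cite{KN20}, so each surface must be analyzed individually using the examples developed in Section \ref{subsec:examples} and Section \ref{sect:increaseainv}, and for each one the Fujita invariant of $-f^*K_S$ on the constructed cover $Y$ must be shown to strictly exceed $1 = a(S, -K_S)$.
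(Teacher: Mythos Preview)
Your argument for $(2) \Rightarrow (1)$ is correct and is essentially the content of Proposition~\ref{prop:higherthanexpected} together with a standard MMP construction of a covering family on $Y$.  The paper organizes this direction differently: it first classifies all breaking maps via Theorem~\ref{theo:dominantaclassificationsurfaces}, observes that in each case $Y$ carries free curves with $\deg g^{*}(K_{Y}-a(Y,-f^{*}K_{S})f^{*}K_{S})=0$, and then invokes Proposition~\ref{prop:higherthanexpected}.  Your direct route avoids the classification and is slightly more economical; just be careful when $Y'\cong\mathbb{P}^{2}$, since a general line may meet the exceptional locus of $\phi:Y\to Y'$.  The cleanest fix is the one the paper uses in the proof of Theorem~\ref{theo:dominantaclassificationsurfaces}: because every $\phi$-exceptional curve has $L$-degree $0$, its $f$-image is a $(-2)$-curve, so $f$ descends to a finite map $Y'\to S'$ to the anticanonical model and one can push the family forward from $Y'$ directly.

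For $(1) \Rightarrow (2)$ your reduction via Theorem~\ref{theo:maintheorem1} to surfaces with every member of $|-K_{S}|$ singular is exactly right, but the case-by-case analysis you anticipate is not needed.  The paper's Section~\ref{subsec:examples} sorts these surfaces into three (overlapping) Types, and for each Type there is a \emph{uniform} construction of the breaking cover:
\begin{itemize}
\item Type~1 (degree $1$, quasi-elliptic $|-K_{S}|$): Frobenius base change of the quasi-elliptic fibration $\widetilde{S}\to\mathbb{P}^{1}$ produces a purely inseparable degree~$2$ cover $Y\to S$ with $a(Y,-f^{*}K_{S})=2$; this is carried out in the proof of Proposition~\ref{prop:type3istype1}.
\item Type~2 (degree $2$, $|-K_{S}|$ purely inseparable onto $\mathbb{P}^{2}$): the absolute Frobenius $\mathbb{P}^{2}\to\mathbb{P}^{2}$ factors through the anticanonical model $S'$, giving $f:\mathbb{P}^{2}\to S'$ with $a=3/2$ (Claim~\ref{clai:type2}(4)).
\item Type~3 (degree $1$, $|-2K_{S}|$ purely inseparable onto $Q$): the Frobenius on the quadric cone factors through $S'$, giving $f:Q\to S'$ with $a=2$ (Claim~\ref{clai:type3}(3)).
\end{itemize}
Corollary~\ref{coro:lowdegreeexpectdimanticanonical} combined with Proposition~\ref{prop:expecteddim} shows that any $S$ satisfying (1) falls into one of these Types, so no sporadic bookkeeping from \cite{KN20} is required.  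Your instinct about ``the foliation defined by the tangent directions of the singular pencil'' is precisely the Type~1 construction, and the ``inseparable double cover suggested by the defining equation'' is the Frobenius-factoring trick for Types~2 and~3; the point is that these two mechanisms already cover every case.
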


\begin{rema}
Note that Theorem \ref{theo:breakingandhigherdim} is slightly weaker than \cite[Theorem 1.1]{LTCompos} because it does not address whether each family of rational curves with larger than expected dimension factors through a breaking map.  We expect this to be the case.
\end{rema}

\begin{rema}
The example of Shioda hypersurfaces suggests that in higher dimensions the correct interpretation of Geometric Manin's Conjecture may be more subtle.  We plan to return to this question in future work.
\end{rema}

In fact, for a weak del Pezzo surface $S$ we can completely classify all breaking maps $f: Y \to S$.  The existence of such maps is closely related to the geometry of the linear series $|-K_{S}|$ and $|-2K_{S}|$.

\begin{theo} \label{theo:intro_dominantaclassificationsurfaces}
Let $S$ be a weak del Pezzo surface of degree $d$ and suppose that $f: Y \to S$ is a dominant generically finite morphism such that $a(Y,-f^{*}K_{S}) > a(S,-K_{S})$.  Then we are in one of the following situations:
\begin{enumerate}
\item $\ch(k) = 2$ or $3$, $d =1$, and $f$ is birationally equivalent to the base change of a quasi-elliptic fibration by a non-separable map to the target curve.
In this case $a(Y,-f^{*}K_{S}) = 2$ and the linear series $|-K_{S}|$ defines the quasi-elliptic fibration (after blowing-up the base point).
\item $\ch(k)=2$, $d=2$, and $f$ is birationally equivalent to a purely inseparable morphism of degree $2$ from $\mathbb{P}^{2}$ to the anticanonical model of $S$.
In this case, we have $a(Y, -f^*K_S) =  3/2$ and $|-K_S|$ defines a purely inseparable degree $2$ cover.
\item $\ch(k)=2$, $d = 1$, and $f$ is birationally equivalent to a purely inseparable morphism of degree $2$ from the quadric cone $Q$ to the anticanonical model of $S$.
In this case, we have $a(Y, -f^*K_S) = 2$ and $|-2K_S|$ defines a purely inseparable degree $2$ cover.
\item $\ch(k)=2$, $d = 1$, and $f$ is birationally equivalent to a non-separable morphism of degree $4$ from $\mathbb{P}^{2}$ to the anticanonical model of $S$.
In this case, we have $a(Y, -f^*K_S) = 3/2$ and  $|-2K_S|$ defines a purely inseparable degree $2$ cover.
\end{enumerate}
When $S$ is a del Pezzo surface then none of (1)-(4) can occur.
\end{theo}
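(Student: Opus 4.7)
The plan is first to reduce to the purely inseparable setting, then classify using foliation theory together with the geometry of $|-K_S|$ and $|-2K_S|$. After passing to a smooth resolution of $Y$, observe that for any \emph{separable} dominant morphism $f:Y\to S$ the usual ramification formula $K_Y=f^*K_S+R$ with $R\ge 0$ immediately gives $a(Y,-f^*K_S)\le 1=a(S,-K_S)$. Thus $f$ must have inseparable factors, so we factor it through a tower of purely inseparable morphisms of degree $p$. Each such step $g:Y'\to Z$ arises from a saturated $1$-foliation $\mathcal{F}\subset T_{Y'}$ via the Ekedahl/Rudakov--Shafarevich correspondence, and the canonical formula relates $K_{Y'}$ to $g^*K_Z$ by a term of the form $(p-1)\det\mathcal{F}$. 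The condition $a(Y',-g^*K_Z)>1$ then translates into $\det\mathcal{F}$ failing to be pseudo-effective, and the precise value of $a$ is controlled by how negative $\det\mathcal{F}$ is against the nef-and-big class $-g^*K_Z$.

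Next, using intersection-theoretic constraints on rank-1 subsheaves of $T_{Y'}$ coming from the weak del Pezzo geometry of $S$, the possible pairs $(S,\mathcal{F})$ with $\det\mathcal{F}$ sufficiently negative are sharply restricted: one is forced into $\deg S\in\{1,2\}$ and $p\in\{2,3\}$, and the surviving foliations admit explicit geometric interpretations. In case~(1), $\mathcal{F}$ is the vertical tangent foliation of the (quasi-)elliptic fibration induced by $|-K_S|$ on the blow-up of its base point; non-separable base changes of this fibration realize $f$, and a direct calculation on the resulting ruled surface gives $a=2$. In cases~(2) and~(3), in characteristic $2$, when $|-K_S|$ (resp.\ $|-2K_S|$) on a weak del Pezzo of degree $2$ (resp.\ $1$) defines a purely inseparable degree-$2$ map onto $\mathbb{P}^2$ (resp.\ onto the quadric cone), one computes $a$ directly from intersection theory: for case~(2), $-f^*K_S=\mathcal{O}_{\mathbb{P}^2}(2)$ and $K_{\mathbb{P}^2}+t\mathcal{O}(2)$ becomes pseudo-effective precisely at $t=3/2$; similarly case~(3) gives $a=2$. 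Case~(4) factors as the composition of (3) with a separable contraction, yielding $a=3/2$.

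For the final assertion, assume $S$ is a smooth del Pezzo. The key input is that for any smooth del Pezzo, a general member of $|-K_S|$ is smooth, so the exceptional surfaces of \cite{KN20} do not arise. In degree $1$ this forces the anticanonical fibration to be genuinely elliptic rather than quasi-elliptic, excluding~(1); in degrees $1$ and $2$ in characteristic $2$ the anticanonical and bi-anticanonical morphisms remain separable double covers, excluding~(2)--(4). The main obstacle will be the classification of admissible foliations in the middle step: combining $p$-closedness of $\mathcal{F}$ with the pseudo-effectivity analysis of $\det\mathcal{F}$ and explicit intersection-theoretic computations on weak del Pezzos of degree $1$ and $2$ in characteristics $2$ and $3$, one must show that the only surviving foliations come from the anticanonical or bi-anticanonical geometry described in (1)--(4).
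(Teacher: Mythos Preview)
Your approach via foliation theory is genuinely different from the paper's, and while the opening reduction (separable maps cannot raise $a$) is correct and shared, the core strategy diverges and your version has real gaps.

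The paper does not use foliations at all. Instead it first classifies, via the surface MMP, all smooth polarized surfaces $(Y,L)$ with $a(Y,L)>1$: the possible values are $a\in\{3,2,3/2\}$, and in each case there is a birational contraction $\phi:Y\to Y'$ to $\mathbb{P}^2$, a quadric, or (in the non-adjoint-rigid case) a $\mathbb{P}^1$-bundle, with $L=\phi^*L'$ for an explicit $L'$. Applying this with $L=-f^*K_S$, one shows the contracted curves also have trivial $-f^*K_S$-degree, so $f$ descends to $f':Y'\to S'$ where $S'$ is the anticanonical model. The single equation $\deg(f')\cdot K_{S'}^2=(f'^*K_{S'})^2=(L')^2$ then pins down both $\deg(f')$ and $\deg S$, and the remaining claims about $|-K_S|$ and $|-2K_S|$ are read off from the earlier classification of low-degree rational curves (the ``Type 1/2/3'' analysis). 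The del Pezzo exclusion follows because each Type forces the anticanonical model to be singular or $|-K_S|$ to be a quasi-elliptic pencil, neither of which happens on a smooth del Pezzo.

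Your foliation route has two substantive problems. First, the tower reduction is not justified: factoring $f$ into height-one steps $Y\to Y_1\to\cdots\to S$ gives a foliation on each $Y_i$, but the hypothesis $a(Y,-f^*K_S)>1$ is on the composite, and you give no argument isolating a single step on which $\det\mathcal{F}$ is forced to be non-pseudo-effective, nor do you control the geometry of the intermediate $Y_i$ (which are not weak del Pezzos). Second, even granting that, the phrase ``intersection-theoretic constraints on rank-1 subsheaves of $T_{Y'}$ coming from the weak del Pezzo geometry of $S$'' is doing all the work and is left as a black box; carrying out that classification is at least as hard as the paper's MMP argument, and you would still need to rule out $\mathbb{P}^1\times\mathbb{P}^1$ as a possible $Y'$ (the paper does this via the uniqueness of the one-parameter family of $-K_S$-lines). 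Finally, your description of case~(4) as ``the composition of (3) with a separable contraction'' is not correct: a contraction is birational, so composing with one cannot raise the degree from $2$ to $4$, and in any event $\mathbb{P}^2$ does not admit a degree-$2$ map to the quadric cone in the required direction.
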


As shown by Theorem \ref{theo:breakingandhigherdim}, the possible weak del Pezzo surfaces $S$ in Theorem \ref{theo:intro_dominantaclassificationsurfaces} are the same as the weak del Pezzo surfaces classified by \cite[Theorem 1.4]{KN20}.  These examples are quite interesting; they show that over a function field there can be a Zariski dense set of rational points which outpaces the exponential term in the rate predicted by Manin's Conjecture.  (Conjecturally there is no Zariski dense set with the analogous property over a number field.  From the perspective of the Fujita invariant, this discrepancy is predicted by the fact that \cite[Theorem 1.1]{HJ16} fails in positive characteristic even in dimension $2$. This is in contrast to positive results for surfaces obtained in \cite{LTT14} and \cite{LT16}.)   

\begin{exam}[{\cite[Table 2]{KN20b}}]
\label{exam:intro_halpen}
Let the ground field $k$ be $\mathbb F_3$.
Let $S'$ be the surface in the weighted projective space $\mathbb P(1, 1, 2, 3)_{(x:y:z:w)}$ defined by 
\[
w^2 + z^3 - x^2y^2(x+y)^2 = 0.
\]
Then this is a du Val del Pezzo surface with four $A_2$ singularities. (\cite[Table 2]{KN20b})
We denote its minimal resolution by $S$ so that $S$ is a weak del Pezzo surface of degree $1$.

Let $\beta : \widetilde{S} \to S$ be the blow up of the base point for $|-K_S|$. Then $|-K_{\widetilde{S}}|$ defines a quasi-elliptic fibration, i.e., a fibration $\pi : \widetilde{S} \to B = \mathbb P^1$ such that a general fiber is a cuspidal rational curve. To construct the component of $\overline{M}_{0,0}(S)$ parametrizing fibers of $\pi$, one needs to take a purely inseparable base change by the Frobenius map $F : B' = \mathbb P^1 \to B = \mathbb P^1$. Set $Y = \widetilde{S} \times_B B'$ and let $\widetilde{Y} \to Y$ be the normalization map. We denote by $\rho: \widetilde{Y} \to B'$ the induced generically smooth fibration and by $f: \widetilde{Y} \to S$ the induced inseparable generically finite map. 

Now we take the base change to the field $K = \mathbb F_3(t)$.  We are interested in the asymptotic growth of the number of $K$-rational points of $S_{K}$ of bounded height.  We claim that the images of the points on $\widetilde{Y}_{K}$ under $f_{K}$ yield a Zariski dense set which grows faster than the expected growth rate (even in the exponential term).

Let $C_K$ be a geometrically integral fiber of $\rho_K: \widetilde{Y}_K \to B'_{K}$ defined over $K$ so that $C_{K} \cong \mathbb{P}^{1}_{K}$. Note that we have $a(C_K, -f_{K}^{*}K_{S_K}) = 2 > a(S_K, -K_{S_K}) = 1$.  Thus the points on each such fiber $C_{K}$ grow at faster than the expected rate: there will be $\sim q^{2d}$ points on $C_{K}$ of anticanonical height $\leq d$ compared to the ``expected'' number $\sim q^{d} d^{\rho(S_{K})-1}$ for $S_{K}$.

Since $B'_K \cong \mathbb{P}^{1}_K$ there will be a Zariski dense set of deformations of $C_K$ defined over the ground field. Also note that $C_K(K)$ is Zariski dense as $C_K$ is isomorphic to $\mathbb P^1_K$. Thus we need to remove a Zariski dense set of rational points on $S_K$ in order to obtain the desired growth rate for rational points.
(Although the set we must remove is Zariski dense, it is a thin set since it comes from $f_K :  Y_K  \to S_K$.)
\end{exam}

\begin{exam} \label{exam:CasciniTanaka} 
We work over $\mathbb{F}_2$.  We will recall an example of a surface considered in \cite[end of Section 9]{KM99} and in \cite{CT18} due to the failure of the Kawamata-Viehweg vanishing theorem.

Suppose we blow-up $\mathbb{P}^{2}$ at all seven $\mathbb{F}_{2}$-points.  We will obtain a weak del Pezzo surface $S$ of degree $2$.  \cite[Proposition 5.3]{CT18} shows that the $(-2)$-curves on $S$ will be precisely the strict transforms of the seven $\mathbb{F}_{2}$-lines on $\mathbb{P}^{2}$.  \cite[Theorem 4.1]{CT18} shows that $|-K_{S}|$ defines a purely inseparable degree $2$ map to $\mathbb{P}^{2}$.  This map factors through the anticanonical model $S'$ of $S$ which has seven $A_{1}$-singularities.

Let $w^2 = f_4(x, y, z)$ be the defining equation of $S'$ in the weighted projective space $\mathbb P(1,1, 1, 2)$ where $f_4$ is a homogenous polynomial of degree $4$.
By construction $f_4$ has coefficients in $\mathbb F_2$. We define the morphism
\[
f : \mathbb P^2 \to S': (s:t:u) \mapsto (x : y : z: w) = (s^2: t^2: u^2 : f_4(s, t, u)).
\]
Then the Frobenius map $F : \mathbb P^2 \to \mathbb P^2$ factors through $f$.

Since $-K_{S'}$ is the pullback of $\mathcal{O}(1)$ under the map $S' \to \mathbb{P}^{2}$, we see that $-f^{*}K_{S'} = \mathcal{O}(2)$.  Thus $a(\mathbb P^2,-f^{*}K_{S'}) = \frac{3}{2}$ while $a(S', -K_{S'}) = a(S,-K_{S}) = 1$. Again working over $K = \mathbb F_2(t)$, the exceptional set for $S_K$ must contain a Zariski dense subset of rational points $f_{K}(\mathbb P_{K}^2(K))$ which have an asymptotic growth rate of $q^{\frac{3}{2}d}$.
\end{exam}

\subsection{Our methods}
To prove Theorem~\ref{theo:maintheorem1}, first we check that components parametrizing stable maps of anticanonical degree $\leq 2$ have expected dimension by classifying these low degree rational curves. (Lemma~\ref{lemm:(-2)-curve}, Lemma~\ref{lemm:lines}, Lemma~\ref{lemm:degree2curves}) Then by employing Bend and Break argument (Lemma~\ref{lemm:weakmbbfordpsurfaces}), we prove that every dominant component of $\overline{\mathrm{Rat}}(S)$ has expected dimension by using an inductive proof on the degree of rational curves. (Proposition~\ref{prop:expecteddim}). 

 A proof of Theorem~\ref{theo:maintheorem2} is similar. We first analyze separability of families of $-K_S$-conics and cubics in Section~\ref{sect:lowdegree} then we use Bend and Break argument (Lemma~\ref{lemm:weakmbbfordpsurfaces}) to prove that every dominant component of $\overline{\mathrm{Rat}}(S)$ parametrizes a free rational curve. (Proposition~\ref{prop: free}).

To achieve Theorem~\ref{theo:maintheorem3}, we first prove that under the assumption on $\mathrm{char}(k)$ every dominant family of rational curves of anticanonical degree $\leq 3$ contains a free rational curve using some deformation theory of rational curves in positive characteristic proved in \cite{IIL20}. (Theorem~\ref{theo:separabilitylargechar}) To this end, one needs to bound the arithmetic genus of rational curves of low degree on a del Pezzo surface $S$ and this is the main reason why our assumption on the characteristic of the ground field depends on the degree of $S$. Then we look at 1-dimensional loci of stable maps of anticanonical degree $e$ passing through $e-2$ general points, and prove that these loci are contained in the smooth locus of $\overline{M}_{0,0}(S)$ using an inductive argument whose base case is settled by Theorem~\ref{theo:separabilitylargechar}. (Lemma~\ref{curve-general_p}) Finally we lift everything to characteristic $0$ and use a specialization argument combined with \cite{Testa09} to conclude our main theorem. Theorem~\ref{theo:degree1char0} is also obtained using a similar idea: the irreducibility is known for general del Pezzo surfaces of degree $1$ by \cite{Testathesis} and we use a specialization argument to obtain the main theorem. We believe that this specialization argument is new, and it has potential to be applicable to problems on the space of rational curves on other Fano varieties.

Finally to obtain Theorem~\ref{theo:intro_dominantaclassificationsurfaces}, we use the 2 dimensional Minimal Model Program and classify smooth projective polarized surfaces with higher $a$-invariants (Theorem~\ref{theo:largeainvsurfaces}). Then we use this result to deduce Theorem~\ref{theo:intro_dominantaclassificationsurfaces}. Finally we found examples of weak del Pezzo surfaces satisfying Theorem~\ref{theo:intro_dominantaclassificationsurfaces} in \cite{KN20} and \cite{KN20b} which classified pathological examples of Du Val del Pezzo surfaces.
Theorem~\ref{theo:breakingandhigherdim} follows from Theorem~\ref{theo:intro_dominantaclassificationsurfaces} and the analysis of low degree rational curves in Section~\ref{sect:expdimlowdegree}.

\subsection{Previous works}

There is a vast literature studying the space of rational curves on various Fano varieties in characteristic $0$. The most relevant results to this paper are \cite{Testathesis} and \cite{Testa09} which classified components of $\overline{\mathrm{Rat}}(S)$ for most del Pezzo surfaces $S$ in characteristic $0$. \cite{LT21} addressed this problem for curves of genus $\geq 1$ on del Pezzo surfaces.  Readers interested in other classification results should consult \cite{BLRT20} and the references therein. 

Let us focus on results in positive characteristic. First of all, there are many papers which study the separable rational connectedness of smooth Fano varieties in characteristic $p$ (for example \cite{Shen10}, \cite{Zhu11}, \cite{CZ14}, \cite{GLPSTZ15}, \cite{Tian15}, \cite{CR19}, \cite{ST19}, and \cite{CS21}). J\'anos Koll\'ar asked whether any smooth Fano variety is separably rationally connected, but this question is wide open at this moment.  On the other hand there are only a few results on the classification of irreducible components of moduli spaces of rational curves on Fano varieties in characteristic $p$. \cite{BS18} discussed the irreducibility of moduli spaces of rational curves on low degree hypersurfaces in positive characteristic using a function field version of the circle method. Moduli spaces of rational curves on toric varieties are classified by Bourqui in \cite{Bourqui} using the Cox ring method. 

Examples of (weak) Fano varieties such that the exceptional set for Manin's conjecture is Zariski dense are well-documented over number fields. The first example was found by Batyrev and Tschinkel in \cite{BT96}, and recently more examples have been found and proved in \cite{LRud} and \cite{BHB20}. \cite{LST18} proposed a geometric description of these exceptional sets over number fields and proved that they are thin sets using the Minimal Model Program. The analogue of \cite{LRud} in positive characteristic has been studied in \cite{Man19}.

\

\noindent
{\bf Notation:} We will work throughout over a field denoted by $k$; usually $k$ will be algebraically closed.  A variety over $k$ is an integral separated scheme of finite type over $k$. 

For a smooth projective variety $X$ over $k$, $N^1(X)_{\bZ}$ denotes the space of divisors up to numerical equivalence and $N_1(X)_\bZ$ denotes the space of integral $1$-cycles up to numerical equivalence. For a projective morphism $f : X \to Y$ of schemes, $N^1(X/Y)_{\bZ}$ denotes the relative numerical N\'eron-Severi group of $X$ over $Y$.

For a scheme $X$, a component of $X$ means an irreducible component of $X$ endowed with its reduced structure.

Let $X$ be a smooth projective variety over $k$ and $L$ be an ample line bundle on $X$. An $L$-line ($L$-conic, or $L$-cubic) is a birational stable map $f : \mathbb P^1 \to X$ such that $\deg f^*L = 1$ (resp. $= 2$, or $3$).

\

\noindent
{\bf Acknowledgements:}
The authors would like to thank Jason Starr for explaining the deformation theory of stable maps in characteristic $p$. The authors would also like to thank Damiano Testa for discussing his results \cite{Testathesis} and \cite{Testa09}.
The authors would like to thank Yusuke Nakamura for answering our questions regarding del Pezzo surfaces in positive characteristic and Tim Browning for letting us know about the results in \cite{BS18} on rational curves on low degree smooth hypersurfaces in characteristic $p$. The authors would like to thank Damiano Testa for comments on an early draft of this paper.

This project was started at the SQuaRE workshop ``Geometric Manin's Conjecture in characteristic $p$'' at the American Institute of Mathematics. The authors would like to thank AIM for the generous support.

Roya Beheshti was supported by NSF grant 2101935. Brian Lehmann was supported by NSF grant 1600875. Eric Riedl was supported by NSF CAREER grant DMS-1945944.  Sho Tanimoto was partially supported by Inamori Foundation, by JSPS KAKENHI Early-Career Scientists Grant number 19K14512, by JSPS Bilateral Joint Research Projects Grant number JPJSBP120219935, by MEXT Japan, Leading Initiative for Excellent Young Researchers (LEADER), and by JST FOREST program Grant number JPMJFR212Z.

\section{Preliminaries}

We restrict our attention to dimension $\leq 3$.  In these dimensions we have resolutions of singularities  over any perfect field of characteristic $p$ by \cite{Abh56}, \cite{CP08}, and \cite{CP09}. We also can run the Minimal Model Program freely in dimension $2$ by \cite{Mumford}, \cite{BM77}, and \cite{BM76}, and in dimension $3$ if $p > 5$ by \cite{HX15}, \cite{CTX15}, \cite{Bir16}, and \cite{BW17}.

\subsection{Classes of singularities}
\label{subsec:node_cusp}

Let $p$ be a closed point of a reduced (possibly reducible) curve $C$ over an algebraically closed field $k$.  We say that $p$ is a node if we have a formal-local equivalence
\begin{equation*}
\widehat{\mathcal{O}_{C,p}} \cong k[[x,y]]/(xy)
\end{equation*}
If $f: Z \to X$ is a stable map which is birational onto its image and the image is a nodal curve then the normal sheaf $N_{f/Z}$ is locally free.

We say that $p$ is a cusp if $C$ is unibranch at $p$ and we have a formal-local equivalence
\begin{equation*}
\widehat{\mathcal{O}_{C,p}} \cong k[[x,y]]/(y^{2} + g_{3}(x,y))
\end{equation*}
for some homogeneous cubic $g_{3}$. If the characteristic is not $2$, then every cusp is formally-locally equivalent to the cusp defined by $y^{2} = x^{3}$.  
If the characteristic is equal to $2$, then the family of cusps has moduli.

Suppose that $f: Z \to X$ is a birational map from an irreducible smooth curve $Z$ and $p \in Z$ maps to a cusp in $f(Z)$.  If the characteristic is not $2$, then the normal sheaf $N_{f/X}$ has a torsion subsheaf of length $1$ at $p$.  If the characteristic is equal to $2$, then the normal sheaf $N_{f/X}$ has a torsion subsheaf of length $2$ at $p$.  Indeed, the curve defined by the equation $y^{2} + ax^{3} + bx^{2}y + cxy^{2} + dy^{3}$ has rational parametrization
\begin{equation*}
x = \frac{t^{2}}{a + bt + ct^{2} + dt^{3}} \qquad \qquad y = \frac{t^{3}}{a + bt + ct^{2} + dt^{3}}
\end{equation*}
and thus $dx$ is either $0$ or divisible by $t^{2}$ and $dy$ is divisible by $t^{2}$.

\begin{rema}
Note that an irreducible arithmetic genus $1$ curve $C$ in a smooth surface can only have nodes and cusps as singularities.  Indeed, a cohomological argument shows that the normalization of  $C$ must have genus $0$, $C$ can have at most one singular point $p$, and the preimage of $p$ under the normalization map must have length $2$.  Letting $\nu: \mathbb{P}^{1} \to C$ denote the normalization map, there is a three-dimensional subspace of $|\mathcal{O}(3)|$ which is constant on $\nu^{-1}(p)$.  This subspace defines a map $\mathbb{P}^{1} \to \mathbb{P}^{2}$ whose image is a cubic isomorphic to $C$.
\end{rema}

\subsection{Deformation theory of stable maps}
Fix an algebraically closed field $k$ and let $X$ be a smooth projective variety defined over $k$. We denote the Kontsevich moduli space of stable maps of genus $0$ by $\overline{M}_{0,0}(X)$. (See \cite{BF97}, \cite{B97}, and \cite{BM96} for the foundational theory of this coarse moduli space.)

Much of the theory of normal bundles to maps in characteristic $0$ goes through in characteristic $p$. We highlight here some useful previous results. Suppose that $C$ is a nodal arithmetic genus $0$ curve mapping to $X$ via a birational morphism $f$ which is a local immersion at each node of $C$. Under these hypotheses, the normal sheaf is defined as an extension
\[
0 \rightarrow \mathcal Ext_{\mathcal O_C}^1(Q, \mathcal O_C) \to  N_{f/X} \to \mathcal Hom_{\mathcal O_C}(K, \mathcal O_C) \to 0,
\]
where $K$ and $Q$ are the kernel and cokernel of $f^*\Omega_X^1 \to \Omega^1_C$. 
When $C$ is irreducible, the normal sheaf $N_{f/X}$ is simply the cokernel of $T_C \to f^*T_X$.
The space $H^0(C, N_{f/X})$ is the tangent space to the moduli space $\overline{M}_{0,0}(X)$ at the point corresponding to $f$ and $H^1(C, N_{f/X})$ is the obstruction space for the moduli space at $[f]$ (\cite{BF97}, \cite{B97}, and \cite{BM96}). In particular, the expected dimension of $\overline{M}_{0,0}(X)$ at $C$ is given by 
\[
h^0(C, N_{f/X}) - h^1(C, N_{f/X}) = -K_X.C + \dim X - 3,
\]
and this is always a lower bound for the dimension of an irreducible component containing $C$. 
It is natural to study $N_{f/X}$ by comparing it to the normal sheaves of the restriction of $f$ to the components of $C$. We have the following theorem which we use frequently:

\begin{theo}[\cite{GHS03} Lemma 2.6]
\label{thm-normalBundleNodalCurve}
Let $C$ be a nodal curve of arithmetic genus $0$ mapping to $X$ via a birational morphism $f$ that is a local immersion at every node. Let $g$ be the restriction of $f$ to a component $C_i$. Then sections of the normal sheaf $N_{f/X}$ restricted to a component $C_i$ are sections of $N_{g/X}$ with simple poles allowed at each node point in the direction of the other component. In particular, if $N_{g/X}$ is rank $1$ (i.e.~if $X$ is a surface) then $N_{f/X}|_{C_i}$ will simply be $N_{g/X}$ with the degree of the free part increased by the number of components meeting $C_i$.
\end{theo}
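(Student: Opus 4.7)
The plan is to reduce the global statement to a local analysis at each node by comparing $N_{f/X}$ with the normal sheaf on a partial normalization. Let $\pi : \widetilde{C} \to C$ separate the branches at each node; on $\widetilde{C}$ the component $C_i$ lifts to a smooth rational curve $\widetilde{C}_i$ and $\widetilde{f} := f \circ \pi$ restricted to $\widetilde{C}_i$ is the immersion $g$. In particular, $\pi_*N_{\widetilde{f}/X}$ is naturally a sheaf on $C$ whose restriction to the smooth locus of each component coincides with $N_{g/X}$, so the problem is to measure the discrepancy between $N_{f/X}$ and $\pi_*N_{\widetilde{f}/X}$ at the nodes.

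First I would construct a natural map $N_{f/X} \to \pi_*N_{\widetilde{f}/X}$ and analyze its cokernel. Working from the defining extension
\[
0 \to \mathcal{H}om_{\mathcal{O}_C}(K, \mathcal{O}_C) \to N_{f/X} \to \mathcal{E}xt^1_{\mathcal{O}_C}(Q, \mathcal{O}_C) \to 0
\]
together with the analogous extension for $N_{\widetilde{f}/X}$, one sees that $K$ and its counterpart on $\widetilde{C}$ agree away from the nodes, so the two normal sheaves only differ at nodal points. At a node $p$ where $C_1$ meets $C_2$, the local immersion hypothesis lets us choose coordinates $(x,y)$ on $X$ in which the two branches are $\{x=0\}$ and $\{y=0\}$. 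A direct local computation then identifies the torsion contribution $\mathcal{E}xt^1_{\mathcal{O}_C}(Q, \mathcal{O}_C)$ with a one-dimensional space at each node, and shows that the corresponding extra section of $N_{f/X}$ restricts to $C_1$ as a section of $N_{g/X}$ with a simple pole at $p$ whose residue lies in the line $T_{X,p}/T_{C_1,p}$, i.e.\ in the direction of $C_2$.

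Next, in the surface case the computation simplifies: $N_{g/X}$ is a line bundle, and $T_{X,p}/T_{C_1,p}$ is one-dimensional and canonically identified with $T_{C_2,p}$ by the local immersion hypothesis, so the ``direction of the other component'' is unambiguous. Thus $N_{f/X}|_{C_1}$ coincides with $N_{g/X}$ twisted up by the divisor formed by the nodes of $C$ lying on $C_1$, which increases the degree of the free part by exactly the number of those nodes --- precisely the asserted conclusion.

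The main obstacle is the bookkeeping in the local computation at each node: one must verify that the map $N_{f/X} \to \pi_*N_{\widetilde{f}/X}$ constructed from the universal properties of the normal sheaf is injective with the expected skyscraper cokernel, and that the residue identification is compatible with the two branches (so that a single section of $N_{f/X}$ really corresponds to two sections with matching residues, not independently chosen ones). All of this is characteristic-free and no subtlety arises in characteristic $p$, so the proof of \cite[Lemma 2.6]{GHS03} carries over verbatim.
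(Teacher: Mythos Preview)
Your overall plan---reduce to a local analysis at each node and compare with the normal sheaves on the individual branches---is exactly right, and in fact is more detailed than the paper's proof, which simply observes that the problem is local, reduces to the case of an embedded LCI curve, and then defers to the discussion in \cite{HT08}.

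There is, however, a genuine slip in your comparison map. You assert that $N_{f/X} \to \pi_*N_{\widetilde{f}/X}$ is injective with skyscraper cokernel, but this is numerically impossible: already for two lines $C = L_1 \cup L_2$ meeting at one point $p$ in $\mathbb{P}^2$ one has $\chi(N_{C/\mathbb{P}^2}) = \chi(\mathcal{O}_C(2)) = 5$ whereas $\chi(\pi_*N_{\widetilde{f}/X}) = \sum_i \chi(N_{L_i/\mathbb{P}^2}) = 2\chi(\mathcal{O}_{\mathbb{P}^1}(1)) = 4$. The natural map goes the other way. In the embedded local model with $I_C = (xy)$ and $I_{C_1} = (y)$, the inclusion $I_C \subset I_{C_1}$ induces $(I_C/I_C^2)|_{C_1} \to I_{C_1}/I_{C_1}^2$ sending the generator $\overline{xy}$ to $x\cdot\bar{y}$; dualizing gives an injection $N_{g/X} \hookrightarrow N_{f/X}|_{C_1}$ which is multiplication by $x$ and hence has cokernel $k$ supported at the node. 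This is exactly the twist you want, and your description of the ``simple pole in the direction of the other component'' is then the statement that $N_{f/X}|_{C_1} \cong N_{g/X}(p)$. Once you reverse the direction of the inclusion, the rest of your local analysis and the surface-case conclusion go through unchanged.
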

\begin{proof}
The problem is local so we may assume that $C$ is an embedded LCI curve.
Then the assertion follows from the discussion of \cite[the bottom of Page 1265]{HT08}.
\end{proof}

\begin{prop}
\label{prop-vanH1NodalCurve}
Let $E$ be a sheaf on a nodal curve $C$ of arithmetic genus $0$ satisfying the following two conditions:
\begin{itemize}
\item For each component $C_i$ in $C$, $H^1(C_{i}, E|_{C_i}) = 0$.
\item Every component $C_i$, except possibly one component $C_0$, satisfies that $E|_{C_i}$ is globally generated.
\end{itemize}
Then $H^1(C, E) = 0$.
\end{prop}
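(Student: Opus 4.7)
The plan is to induct on the number $n$ of components of $C$. Since $C$ is a nodal curve of arithmetic genus $0$, its dual graph is a tree, so in particular if $n \geq 2$ then $C$ has at least two leaves (components meeting the rest at a single node); in the inductive step I can therefore choose a leaf $C_\ell$ that is \emph{not} equal to the distinguished component $C_0$.

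The base case $n=1$ is immediate from the first hypothesis. For the inductive step, let $p$ denote the unique node where $C_\ell$ meets $C':=\overline{C\setminus C_\ell}$. I would use the normalization-type short exact sequence
\[
0 \to E \to E|_{C'} \oplus E|_{C_\ell} \to E|_p \to 0,
\]
and take cohomology to obtain
\[
H^0(C',E|_{C'}) \oplus H^0(C_\ell, E|_{C_\ell}) \xrightarrow{\,\phi\,} E|_p \to H^1(C,E) \to H^1(C',E|_{C'}) \oplus H^1(C_\ell,E|_{C_\ell}).
\]
Since $C_\ell \neq C_0$, the sheaf $E|_{C_\ell}$ is globally generated, so the restriction map $H^0(C_\ell,E|_{C_\ell}) \to E|_p$ is already surjective; hence $\phi$ is surjective. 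The summand $H^1(C_\ell,E|_{C_\ell})$ vanishes by hypothesis, and the summand $H^1(C',E|_{C'})$ vanishes by the inductive hypothesis: the curve $C'$ is again a tree of $\mathbb{P}^1$'s of arithmetic genus $0$, the first hypothesis is clearly inherited, and the second hypothesis is inherited because the only component on which $E$ is allowed to fail to be globally generated -- namely $C_0$ -- still lies in $C'$. This forces $H^1(C,E)=0$.

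The only mild subtlety is the use of the sequence above when $E$ is not locally free; I would justify it by noting that the problem is local at the node $p$, where $\mathcal{O}_C \to \mathcal{O}_{C'}\oplus \mathcal{O}_{C_\ell} \to \mathcal{O}_p \to 0$ is exact (with the first map injective), and tensoring by $E$ remains exact on the left because $E$ is torsion-free along $p$ -- and this torsion-freeness is automatic in the applications we care about (normal sheaves of maps that are local immersions at the nodes, as in Theorem~\ref{thm-normalBundleNodalCurve}). I expect this to be the only point where one must be careful; the cohomological argument itself is essentially formal once the leaf $C_\ell \neq C_0$ is chosen.
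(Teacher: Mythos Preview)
Your argument is correct and rests on the same normalization exact sequence idea as the paper. The paper does it in one shot, writing down
\[
0 \to E \to \bigoplus_i E|_{C_i} \to \bigoplus_j E|_{p_j} \to 0
\]
for all components and all nodes simultaneously and then asserting surjectivity of $\bigoplus_i H^0(E|_{C_i}) \to \bigoplus_j E|_{p_j}$. That surjectivity implicitly uses exactly the tree structure you exploit: one must root the tree at $C_0$, pair each node with the unique adjacent component farther from $C_0$, and use global generation there. Your inductive version, peeling off a leaf $C_\ell \neq C_0$ one at a time, makes this step transparent---surjectivity onto a single fiber $E|_p$ from a globally generated sheaf is immediate---at the cost of a short induction. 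The two arguments are equivalent in content.

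Your caveat about left exactness of the sequence when $E$ is not locally free is well taken, and the paper's proof glosses over the same point. In the applications (restrictions of $f^*T_S$ or normal sheaves of maps that are local immersions at the nodes) $E$ is locally free near the nodes, so the issue does not arise; it is fine to note this as you do.
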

\begin{proof}
Recall the exact sequence
\[ 0 \to E \to \oplus_{i} E|_{C_i} \to \oplus_{j} E|_{p_j} \to 0. \]
The hypothesis that every component except $C_0$ satisfies that $E|_{C_i}$ is globally generated shows that the map $H^0(\sum_i E|_{C_i}) \to H^0(\sum_j E|_{p_j})$ is surjective. Thus, we see that $H^1(C,E)$ is isomorphic to $\oplus_i H^1(C_{i},E|_{C_i})$, which vanishes by hypothesis.
\end{proof}

\section{Low degree curves with higher than the expected dimension} \label{sect:expdimlowdegree}

Let $S$ be a weak del Pezzo surface.  Our first goal is to analyze when a family of rational curves of anticanonical degree $\leq 2$ has larger than the expected dimension.  This analysis will form the base case of an inductive argument which addresses curves of arbitrary anticanonical degree.

\begin{theo} \label{theo:expectdimlowdegree}
Let $S$ be a weak del Pezzo surface of degree $d$ over an algebraically closed field $k$.  When $d = 2$, we assume furthermore that $S$ is not the following exception:
\begin{enumerate}
\item $\ch(k)=2$ and $|-K_{S}|$ defines a purely inseparable generically finite morphism $f: S \to \mathbb{P}^{2}$.
\end{enumerate}
When $d = 1$, we assume furthermore that $S$ is not one of the following exceptions:
\begin{enumerate}
\setcounter{enumi}{1}
\item $\ch(k) = 2$ or $3$ and a general member of $|-K_S|$ is singular, or;
\item $\ch(k) = 2$ and $|-2K_S|$ defines a purely inseparable generically finite morphism $f : S \to Q$ where $Q$ is a quadric cone, or;
\item $\ch(k) = 2$ and $S$ admits a birational morphism to a surface as in (1) above.
\end{enumerate}
Let $M$ be a component of $\overline{M}_{0,0}(S)$ generically parametrizing a family of birational maps to curves $C$ with $-K_{S} \cdot C \leq 2$.  Then $M$ has the expected dimension unless $C$ is a $(-2)$-curve.
\end{theo}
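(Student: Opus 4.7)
The plan is to split the analysis by $e := -K_{S} \cdot C \in \{0,1,2\}$ and, in each case, to invoke the classification of low-degree irreducible rational curves on a weak del Pezzo surface (Lemmas~\ref{lemm:(-2)-curve}, \ref{lemm:lines}, and~\ref{lemm:degree2curves}) together with an adjunction computation of the normal sheaf.

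When $e=0$, the class of $C$ lies in the cone generated by the $(-2)$-curves, and since $C$ is the image of a birational morphism from $\mathbb{P}^{1}$ it is irreducible, so $C$ itself must be a single $(-2)$-curve; this case is excluded from the conclusion. When $e=1$, adjunction gives $C^{2}=-1$, so $C$ is a smooth $(-1)$-curve. Its normal bundle is $\mathcal{O}_{\mathbb{P}^{1}}(-1)$, which has no sections, so the point $[f]\in M$ is isolated and $\dim M = 0 = e-1$, matching the expected value.

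The substantive case is $e=2$, where the expected dimension is $1$. If the general $C$ parametrized by $M$ is smooth, then $C^{2}=0$ by adjunction, hence $N_{C/S}=\mathcal{O}_{C}$ and the deformations of $C$ form a pencil on $S$. If the general $C$ is singular it must be a rational curve with a unique node or cusp; in the nodal case the normal sheaf $N_{f/S}$ has only length-$1$ torsion and $h^{0}(N_{f/S})$ remains equal to $1$, so $\dim M = 1$. The lone danger is a cuspidal generic $C$ in characteristic $2$, where as discussed in Section~\ref{subsec:node_cusp} the torsion of $N_{f/S}$ has length $2$ and can push $h^{0}(N_{f/S})$ above $1$.

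The hard part, and the reason for the exceptions (1)--(4), is ruling out dominant families of cuspidal $-K_{S}$-conics outside the listed cases. These exceptions are precisely the scenarios where such anomalous families genuinely exist: in (1) the pullback of lines under the inseparable map $S\to\mathbb{P}^{2}$ defined by $|-K_{S}|$ produces a two-parameter family of cuspidal conics; in (2) the pencil $|-K_{S}|$ is a quasi-elliptic pencil of cuspidal curves, and its multiples contribute unexpected $-K_{S}$-conic families on $S$; in (3) the pullback of the ruling components of the quadric cone under the inseparable map $S\to Q$ defined by $|-2K_{S}|$ yields a two-parameter family; and (4) reduces to (1) under the birational morphism. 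Away from these, the relevant linear series ($|-K_{S}|$ for $d\geq 2$, $|-2K_{S}|$ for $d=1$) defines a separable generically finite map to the anticanonical model, so that Lemma~\ref{lemm:degree2curves} gives that every dominant family of $-K_{S}$-conics has smooth or at worst nodal general member; thus $\dim M = 1$, the expected dimension.
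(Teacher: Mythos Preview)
Your overall strategy --- split by $e=-K_S\cdot C\in\{0,1,2\}$ and invoke Lemmas~\ref{lemm:(-2)-curve}, \ref{lemm:lines}, \ref{lemm:degree2curves} --- is exactly what the paper does. However, your execution of the $e=1$ and $e=2$ cases has genuine gaps, and as a result you have misidentified which exception handles which phenomenon.

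\textbf{The $e=1$ case.} The assertion ``adjunction gives $C^{2}=-1$'' is false in general. Adjunction gives $C^{2}=2p_{a}(C)-1$, and Hodge index only forces $dC^{2}\le 1$. When $d=1$ you can have $C^{2}=1$, i.e.\ $C\in|-K_{S}|$ with $p_{a}(C)=1$. This is precisely Lemma~\ref{lemm:lines}(2)--(3): either the general anticanonical curve is smooth (so there are only finitely many rational ones, $\dim M=0$) or the pencil is quasi-elliptic ($\dim M=1>0$, higher than expected). Exception~(2) in the theorem is there to exclude this quasi-elliptic scenario in degree~$e=1$, not to address conics.

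\textbf{The $e=2$ case.} You only treat $C^{2}\in\{0,2\}$, but when $d=1$ Hodge index also allows $C^{2}=4$, i.e.\ $C\in|-2K_{S}|$ with $p_{a}(C)=2$. This is Case~3 of the proof of Lemma~\ref{lemm:degree2curves} and is by far the most delicate part of the argument: the curves have arithmetic genus~$2$, so a normal-sheaf torsion count is not enough, and the paper instead analyzes the geometry of the degree-$2$ cover $S\to Q$ and its branch curve (with separate, lengthy arguments in characteristics $\ge 3$ and $2$). Exception~(3) is needed exactly here, and exception~(2) is needed again because the Bend-and-Break step in that argument requires control over $-K_S$-lines. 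Your proposal does not engage with this case at all.

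Two smaller points: for a nodal image the normal sheaf $N_{f/S}$ is locally free, not ``length-$1$ torsion'' (see Section~\ref{subsec:node_cusp}); and your final sentence misquotes Lemma~\ref{lemm:degree2curves} --- it does not assert that the general member is smooth or nodal, only that $M$ has the expected dimension under the stated separability hypotheses.
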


We will prove Theorem \ref{theo:expectdimlowdegree} by analyzing each anticanonical degree $\leq 2$ separately.

\begin{lemm}
\label{lemm:(-2)-curve}
Let $S$ be a weak del Pezzo surface.  Every rational curve $C$ satisfying $-K_{S} \cdot C = 0$ is a smooth $(-2)$-curve on $S$.
\end{lemm}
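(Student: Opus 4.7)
The plan is to combine the adjunction formula with the Hodge index theorem, exploiting the fact that on a weak del Pezzo surface $-K_{S}$ is both nef and big with $(-K_{S})^{2} = d > 0$.

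First I would apply adjunction: since $-K_{S} \cdot C = 0$, we immediately obtain $C^{2} = 2 p_{a}(C) - 2$. Because $C$ is a rational curve, it is irreducible and reduced with normalization $\mathbb{P}^{1}$, so $p_{a}(C) = \delta(C) \geq 0$, where $\delta(C)$ denotes the total delta invariant of the singularities of $C$. This already gives $C^{2} \geq -2$, with equality precisely when $C$ is smooth.

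Next I would apply the Hodge index theorem to $-K_{S}$: since $(-K_{S})^{2} > 0$, any divisor class $D$ with $-K_{S} \cdot D = 0$ satisfies $D^{2} \leq 0$, with equality only when $D$ is numerically trivial. As $C$ is a nonzero effective irreducible divisor it meets any ample class positively and so cannot be numerically trivial; therefore $C^{2} < 0$. Combined with the adjunction bound $C^{2} \geq -2$, the only possibility is $C^{2} = -2$ and $p_{a}(C) = 0$, forcing $\delta(C) = 0$. Hence $C$ is smooth, and we conclude that $C$ is a smooth $(-2)$-curve.

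The proof is short and presents no real obstacle; the only small points to verify are the non-negativity of $p_{a}(C)$ (which comes from $C$ being reduced and irreducible) and the bigness of $-K_{S}$ (which is part of the weak del Pezzo hypothesis and supplies the strict inequality $(-K_{S})^{2} > 0$ needed for Hodge index).
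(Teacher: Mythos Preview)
Your proof is correct and follows essentially the same approach as the paper: combine the adjunction formula (giving $C^{2} = 2p_{a}(C)-2 \geq -2$) with the Hodge Index Theorem (giving $C^{2} \leq 0$, with equality ruled out since $C$ is a nonzero curve class) to force $C^{2}=-2$ and $p_{a}(C)=0$. The only cosmetic difference is that the paper excludes $C^{2}=0$ by noting $C$ would then be proportional to $-K_{S}$, whereas you invoke directly that equality in Hodge index forces numerical triviality.
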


\begin{proof}
By the Hodge Index Theorem we see that $C^{2} \leq 0$.  The arithmetic genus formula tells us that $C^{2} = 2p_a(C)-2$, so that $C^{2}$ is even and is at least $-2$.  If $C^{2}=0$ then by the Hodge Index Theorem $C$ must be proportional to $-K_{S}$, an impossibility.  Thus $C^{2}=-2$.  We deduce that $C$ has arithmetic genus $0$ and thus is smooth.
\end{proof}

\begin{lemm}
\label{lemm:lines}
Let $S$ be a weak del Pezzo surface of degree $d$.  Suppose that $M$ is a component of $\overline{M}_{0,0}(S)$ that parametrizes rational curves $C$ with $-K_{S} \cdot C = 1$.  Then either:
\begin{enumerate}
\item $C$ is a $(-1)$-curve.
\item $d=1$, $\dim(M) = 0$, and $C$ is a rational curve in $|-K_{S}|$.
\item $d=1$, $\dim(M) = 1$, and the curves parametrized by $M$ yield a quasielliptic fibration on the blow-up of $S$ along the basepoint of $|-K_{S}|$.  Furthermore in this case $S$ cannot be a del Pezzo surface.
\end{enumerate}
\end{lemm}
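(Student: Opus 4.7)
The plan is to combine adjunction with the Hodge Index Theorem to constrain $C^{2}$ and then analyze the dimension of $M$ case by case. Adjunction gives $C^{2} = 2p_{a}(C) - 2 - K_{S}\cdot C = 2p_{a}(C) - 1$, so $C^{2} \geq -1$ and is odd. Since $-K_{S}$ is big and nef on a weak del Pezzo, Hodge Index yields
$$1 = (-K_{S}\cdot C)^{2} \geq (-K_{S})^{2}\cdot C^{2} = d\,C^{2},$$
unless $C$ is numerically proportional to $-K_{S}$. For $d \geq 2$ this will force $C^{2} = -1$, so $p_{a}(C) = 0$ and $C$ is a $(-1)$-curve, placing us in case~(1). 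For $d = 1$ we pick up the additional possibility $C^{2} = 1$.

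In this new case ($d = 1$, $C^{2} = 1$) Hodge Index is an equality, so $C \equiv -K_{S}$. Since $H^{1}(\mathcal{O}_{S}) = 0$ on a weak del Pezzo (they are rational surfaces), numerical and linear equivalence coincide, so $C \in |-K_{S}|$. A short Riemann--Roch and vanishing computation gives $h^{0}(-K_{S}) = 2$, so $|-K_{S}|$ is a pencil with a single base point $p$. Because $M$ parametrizes birational stable maps whose images lie in this $1$-dimensional linear system, we automatically have $\dim M \leq 1$. If $\dim M = 0$, we land in case~(2), with $M$ collecting the isolated rational members of the pencil, which are necessarily nodal or cuspidal cubics by the remark in Section~\ref{subsec:node_cusp}. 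If $\dim M = 1$, then $M$ sweeps out the entire pencil, so \emph{every} member of $|-K_{S}|$ is rational. Blowing up $p$ to obtain $\beta : \widetilde{S} \to S$ and using $(\beta^{*}(-K_{S}) - E)^{2} = 0$, the proper transforms become the fibers of a morphism $\pi : \widetilde{S} \to \mathbb{P}^{1}$ whose general fiber is a singular arithmetic genus $1$ rational curve; by definition $\pi$ is quasi-elliptic.

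The hardest remaining step -- and the main obstacle of the proof -- is to show $-K_{S}$ cannot be ample in case~(3). Here I plan to invoke the classification of \cite{KN20}: the weak del Pezzo surfaces whose anticanonical pencil contains no smooth member appear on an explicit short list, occurring only in characteristic $2$ or $3$, and each entry carries $(-2)$-curves and hence is not a true del Pezzo. As a consistency check, quasi-elliptic fibrations exist only in characteristics $2$ and $3$, matching the characteristic restriction of that classification. The earlier steps are essentially numerical; a direct argument avoiding \cite{KN20} would require showing that a quasi-elliptic structure on $\widetilde{S}$ always descends to produce a $(-2)$-curve on $S$ (plausibly by analyzing the multisection structure of $\pi$ against $E$), but the classification route is cleaner.
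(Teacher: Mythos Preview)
Your proof is correct and follows the paper's approach almost exactly through the numerical classification: adjunction plus Hodge Index to pin down $C^{2} \in \{-1,1\}$, identification of the $C^{2}=1$ case with $|-K_{S}|$ when $d=1$, and the split into $\dim M = 0$ versus $\dim M = 1$.

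The one substantive divergence is in the final step, ruling out case~(3) for a genuine del Pezzo. You invoke the \cite{KN20} classification to conclude that every weak del Pezzo with quasi-elliptic anticanonical pencil carries a $(-2)$-curve. The paper instead gives a short self-contained argument via Bend-and-Break: push the pencil forward along a blow-down $S \to \mathbb{P}^{2}$ to get a one-parameter family of rational plane cubics through $9$ fixed points (the eight blown-up points plus the base point of $|-K_{S}|$); Bend-and-Break then forces a non-integral member. But on a del Pezzo of degree $1$ every effective curve has strictly positive anticanonical degree, so a member of $|-K_{S}|$ (total degree $1$) cannot be reducible or non-reduced. This is considerably more elementary than appealing to the full \cite{KN20} classification, and it has the added virtue of keeping the lemma logically prior to that classification (which the paper later uses to \emph{describe} the surfaces arising in case~(3), rather than to rule out ampleness). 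Your route is valid, but the paper's is the cleaner one here.
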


\begin{proof} The Hodge Index Theorem tells us that $dC^{2} - 1 \leq 0$.  The arithmetic genus formula tells us that $C^{2}-1 = 2p_a(C)-2$, so that $C^{2}$ is odd and is at least $-1$.  We deduce that the only options are:
\begin{itemize}
\item $C^{2} = -1$, $d$ arbitrary: in this case the arithmetic genus of $C$ is $0$, so $C$ is a $(-1)$-curve.
\item $C^{2} = 1$, $d=1$: in this case $C \in |-K_{S}|$ by the Hodge Index Theorem and $C$ has arithmetic genus $1$.  One possibility is that the general element of $|-K_{S}|$ is a smooth elliptic curve, in which case we are in (2).  The other option is that every element of $|-K_{S}|$ is a singular rational curve.  Note that $|-K_{S}|$ is not basepoint free, since any pencil of cubic curves in $\mathbb{P}^{2}$ will have nine base points.  Moreover since $(-K_{S})^{2} = 1$, two general members of $|-K_{S}|$ intersect at one point which is not a singular point of either curve.  Thus when we resolve the base locus of $|-K_{S}|$ by blowing up a single point the resulting fibration must be a quasi-elliptic fibration so that we are in (3). 
\end{itemize}
Finally, we note that families as in (3) do not exist on a del Pezzo surface of degree $1$.  It suffices to note that any quasi-elliptic pencil in the anticanonical system must contain a non-integral curve.  Indeed, by pushing these curves forward to $\mathbb{P}^{2}$ we obtain a family of rational curves through $9$ fixed points and Bend-and-Break guarantees that this family on $\mathbb{P}^{2}$ parametrizes a non-integral curve.
\end{proof}

\begin{lemm} \label{lemm:degree2curves}
Let $S$ be a weak del Pezzo surface of degree $d$.  Suppose that $M$ is a component of $\overline{M}_{0,0}(S)$ that generically parametrizes rational curves $f: \mathbb P^1 \to C \subset S$ with $-K_{S} \cdot C = 2$ and $f$ is birational.  Then either:
\begin{enumerate}
\item the component $M$ parametrizes the fibers of a conic fibration.  In this case $M$ has the expected dimension.
\item $d = 2$ and $M$ parametrizes curves in $|-K_{S}|$.  If $M$ has larger than the expected dimension then $|-K_{S}|$ does not define a separable map.
\item $d = 1$ and there is a birational map $\phi: S \to \widetilde{S}$ where $\widetilde{S}$ is a weak del Pezzo surface of degree $2$ such that $M$ parametrizes rational curves in $|-\phi^{*}K_{\widetilde{S}}|$.  If $M$ has larger than expected dimension then $|-\phi^{*}K_{\widetilde{S}}|$ does not define a separable map.
\item $d=1$ and $M$ parametrizes curves in $|-2K_{S}|$.  If $M$ has larger than expected dimension then either $|-K_{S}|$ defines a quasielliptic fibration or $|-2K_{S}|$ does not define a separable map.
\end{enumerate}
\end{lemm}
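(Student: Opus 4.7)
The plan is to classify $C$ numerically using adjunction and the Hodge Index Theorem, and then treat each resulting case in turn. Since $-K_{S} \cdot C = 2$, the arithmetic genus formula yields $C^{2} = 2p_{a}(C) \in \{0, 2, 4, \ldots\}$, and Hodge Index applied to the big and nef class $-K_{S}$ gives $d \cdot C^{2} \leq 4$, with equality only when $C$ is numerically proportional to $-K_{S}$. Consequently the possibilities are (a) $C^{2}=0$, (b) $C^{2}=2$ with $d \in \{1,2\}$, or (c) $C^{2}=4$ with $d=1$ and $C \equiv -2K_{S}$. In all cases the expected dimension of $M$ is $-K_{S}\cdot C + \dim S - 3 = 1$.

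For case (a), $C$ is smooth rational. A Riemann--Roch and Serre duality computation (using that $K_{S}-C$ has negative intersection with the nef $-K_{S}$) will show $h^{0}(\mathcal{O}_{S}(C)) \geq 2$, and since $C^{2}=0$ two general members of $|C|$ are disjoint, so $|C|$ is a base-point-free pencil defining a conic fibration whose fibers are precisely the curves parametrized by $M$. Hence $\dim M = 1$, giving conclusion (1).

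For case (b), when $d=2$ Hodge Index forces $C \equiv -K_{S}$, and since the Picard group is torsion-free we get $C \in |-K_{S}|$. When $d=1$ I would consider $E := C+K_{S}$ and verify via Riemann--Roch that $E$ is effective with $E^{2}=-1$ and $E \cdot K_{S}=-1$; extracting from the support of $E$ a $(-1)$-contraction $\phi : S \to \widetilde{S}$ onto a weak del Pezzo $\widetilde{S}$ of degree $2$ will place $C$ in $|-\phi^{*}K_{\widetilde{S}}|$ and reduce the problem to the previous sub-case on $\widetilde{S}$. In either sub-case the relevant linear system is $2$-dimensional by Riemann--Roch. Assuming the anticanonical map of $\widetilde{S}$ is separable, applying generic smoothness to its restriction over a general pencil of lines in $\mathbb{P}^{2}$ forces a general member of $|-K_{\widetilde{S}}|$ to be a smooth arithmetic genus $1$ curve; the rational locus is then a proper subfamily of dimension at most $1$. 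Contrapositively, $\dim M > 1$ forces the anticanonical map to be inseparable, which is conclusions (2) and (3).

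Finally, in case (c) the linear system $|-2K_{S}|$ has dimension $3$ and defines a degree $2$ map onto a quadric cone $Q \subset \mathbb{P}^{3}$. I would separate the two expected sources of components of $M$. First, if $|-K_{S}|$ defines a quasi-elliptic fibration as in Lemma~\ref{lemm:lines}(3), then combining the one-parameter family of cuspidal rational fibers (in class $-K_{S}$) with the one-parameter family of degree $2$ self-covers of $\mathbb{P}^{1}$ (factored through the normalization of each fiber) will produce a component of $\overline{M}_{0,0}(S, -2K_{S})$ of dimension $2$, exceeding the expected dimension. Second, for a component generically parametrizing birational maps onto integral members of $|-2K_{S}|$, separability of the map defined by $|-2K_{S}|$ together with generic smoothness will force a general such member to be a smooth arithmetic genus $2$ curve; hence rational members span at most a $1$-dimensional subfamily. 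An unexpectedly large component of this second type therefore forces $|-2K_{S}|$ to fail separability. This yields conclusion (4). The hard part of the argument will be the careful positive-characteristic application of Bertini/generic smoothness, and, in case (c), cleanly separating the multiple-cover component from the birational-image component and verifying the dimension count for the degree $2$ covers supported on the quasi-elliptic pencil.
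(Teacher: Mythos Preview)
Your numerical trichotomy and the treatment of cases (a) and (b) track the paper's argument closely and are essentially correct; in (b) with $d=1$ the paper also checks that $K_S+C$ is \emph{exactly} a $(-1)$-curve rather than merely effective with one in its support, but this is a routine refinement of what you wrote.

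The genuine gap is case (c). First, your double-cover sub-case is beside the point: it argues a converse direction (quasielliptic $\Rightarrow$ large dimension), and in any event the maps it produces have image of anticanonical degree $1$, so they do not belong to a component $M$ as in the lemma. Second, and this is the substantive issue, your separability argument does not deliver the bound you claim. The linear system $|-2K_S|$ has dimension $3$, so separability plus generic smoothness only tells you that the locus of singular (hence in particular rational) members is a proper closed subset, i.e.\ of dimension at most $2$. Since the expected dimension is $1$, you are one dimension short, and your sketch gives no mechanism to close that gap. The paper's proof of (4) is correspondingly the longest part of the lemma. Writing $g\colon S\to Q$ for the bi-anticanonical double cover with branch curve $B_1$, one observes that every irreducible rational $C\in|-2K_S|$ has $g(C)$ either passing through $\mathrm{Sing}(B_1)$ or tangent to $B_1$ at a smooth point. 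A $2$-dimensional family of the first kind is excluded by Bend-and-Break: a $2$-parameter family through a fixed point would break off a $1$-parameter family of $-K_S$-lines, contradicting the non-quasielliptic hypothesis via Lemma~\ref{lemm:lines} --- this is precisely where that hypothesis is used, and your outline never invokes it. A $2$-dimensional family of the second kind would force the general member to be cuspidal (nodal curves have locally free normal sheaf and hence the expected number of deformations); the paper then rules this out with a strange-curve argument in characteristic $\geq 3$ and a lengthy explicit computation in characteristic $2$. None of this is subsumed by a generic-smoothness appeal, so as written your proposal does not establish conclusion (4).
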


\begin{proof}
The Hodge Index Theorem tells us that $dC^{2} - 4 \leq 0$.  The arithmetic genus formula shows that $C^{2}-2 = 2g-2$, so that $C^{2}$ is even and is at least $0$.  We deduce that the only options are $C^{2} = 0,2,4$.

\textbf{Case 1:} $C^{2} = 0$, $d$ arbitrary: in this case the arithmetic genus of $C$ is $0$ and $H^{0}(S,\mathcal O(C)) = 2$.  Thus curves of this type are the fibers of a conic fibration.

\textbf{Case 2:} $C^{2} = 2$, $d=1,2$: in this case the arithmetic genus of $C$ is $1$ and $\deg(N_{f/S}) = 0$.  Suppose that $d=2$.  By the Hodge Index Theorem, $C$ is a member of $|-K_S|$. If $\Phi_{|-K_S|}$ is separable, then \cite[Proposition 4.4]{KN20} shows that a general element of $|-K_{S}|$ is smooth and we conclude that $M$ has the expected dimension. 

Next suppose that $d=1$.  In this case, we claim that $C$ is the pullback of a curve under a birational map to a degree $2$ weak del Pezzo surface.  To see this, it suffices to find a $(-1)$-curve which has vanishing intersection with $C$.  We claim that $K_{S}+C$ will be linearly equivalent to such a curve.  Indeed, we have
\begin{equation*}
(K_{S} + C) \cdot C = 0 \qquad (K_{S}+C) \cdot K_{S} = -1 \qquad  (K_{S} + C)^{2} = -1
\end{equation*}
Since $H^{2}(S,K_{S}+C)$ vanishes by Serre duality, Riemann-Roch shows that $H^{0}(S,K_{S}+C)$ is non-zero. This means that $K_S + C$ is linearly equivalent to an effective divisor and it follows from the above intersection numbers that it has the form of $E + D$ where $E$ is a $(-1)$-curve and $D$ is a non-negative linear combination of $(-2)$-curves. Then since $C$ is nef, we have $C \cdot D = E \cdot D + D^2 = 0$. We also have $-1 = (E+D)^2= -1 + 2E \cdot D + D^2$. Thus we conclude that $D^2 = 0$ proving that $D = 0$ by the Hodge Index Theorem.  Finally note that the deformations of $C$ on this degree $2$ weak del Pezzo yield a family satisfying (2).  Thus on our original surface we are in situation (3).

\textbf{Case 3:} $C^{2}=4$, $d = 1$: in this case the arithmetic genus of $C$ is $2$ and $C \in |-2K_{S}|$.  We will let $g: S \to Q$ denote the morphism to the quadric cone defined by $|-2K_{S}|$.  We let $S'$ denote the anticanonical model of $S$ and $g': S' \to Q$ the corresponding finite degree $2$ morphism.

To show that (4) holds, we must show that $M$ has the expected dimension if $|-K_{S}|$ does not define a quasielliptic fibration and $|-2K_{S}|$ defines a separable map.  From now on we assume both these conditions.  Let $B \subset Q$ denote the branch locus of $g: S \to Q$.  If $\ch(k) \geq 3$ then $B$ is the disjoint union of the cone vertex with a curve $B_1$ of degree $6$.  (If $B_{1}$ contained the cone vertex then one can show that $S'$ would have worse than canonical singularities, a contradiction.) If $\ch(k)=2$ then the cone vertex is contained in a dimension $1$ component of $B$ whose reduced part is a degree $3$ curve $B_{1}$.  

Every irreducible rational curve $C \in |-2K_{S}|$ will be singular, and thus the restriction $g|_{C}$ cannot realize $C$ as a simply branched cover of a smooth curve.  We conclude that $C$ must satisfy one of the following conditions:
\begin{enumerate}
\item $g(C)$ goes through a singular point of $B_{1}$.
\item $g(C)$ is a hyperplane section of $Q$ which is tangent to the divisor $B_{1}$ at a smooth point of $B_{1}$.
\end{enumerate}
We first show that there cannot be a $2$-dimensional family of rational curves as in (1) above.  In fact, we claim that there is not a $2$-dimensional sublocus of $M$ parametrizing rational curves through a fixed point $p \in S$.  Indeed, if there were such a family, then by applying Bend-and-Break we would obtain a $1$-dimensional family of rational curves with class $|-K_{S}|$.  But Lemma \ref{lemm:lines} would then contradict our assumption on $|-K_{S}|$.

We next rule out a $2$-dimensional family of rational curves as in (2) above.  If $C$ is a nodal rational curve then its normal sheaf is locally free and thus the deformations of $C$ have the expected dimension.  We conclude that if we have a $2$-dimensional family of deformations of $C$ then a general deformation must have a cusp. The rest of the argument depends on the characteristic.

\textbf{Case 3a:} First suppose that $\ch(k) \geq 3$.   If $C$ is a cuspidal rational curve in $|-2K_{S}|$ then its $g$-image is a hyperplane section of $Q \subset \mathbb{P}^{3}$ which has a point of tangency of order $\geq 3$ with $B_{1}$.  In other words, if $M$ fails to have the expected dimension then there is a two-dimensional family of planes meeting the curve $B_1$ to order at least three at some point. Since there is only a two-dimensional space of planes containing a tangent line of $B_1$, it follows that every tangent line to $B_1$ must be a flex. 
 Thus the tangent lines to $B_1$ meet $Q$ to order at least 3 at a given point, and in particular are all contained in $Q$.  In other words, the lines of the ruling are all tangent to $B_1$, and this implies that every member of $|-K_S|$ is singular. This contradicts with our assumption.
 %These lines all pass through a fixed point, so it follows that $B_1$ is a strange curve in the sense of Hartshorne \cite{hartshorne}, which is impossible by \cite[Chapter IV, Theorem 3.9]{hartshorne}. Thus, we see that when $\ch(k) \geq 3$ the space of rational curves in $|-2K_S|$ is at most 1-dimensional. 

\textbf{Case 3b:} Next suppose that $\ch(k) = 2$.  Since by assumption $g$ is separable, the anticanonical model $S'$ of $S$ is defined by an equation of the form $w^{2} + wf_{3} + f_{6}$ where $f_{3},f_{6}$ are homogeneous functions on $\mathbb{P}(1,1,2)$.
We will use coordinates $x_{0},x_{1},y$ on the weighted projective space.  First suppose that $f_{3}$ is irreducible and reduced.  Then by applying the automorphism group of $\mathbb{P}(1,1,2)$ we may suppose that
\begin{equation*}
f_{3} = x_{0}y - x_{1}^{3}.
\end{equation*}
Just as before, we rule out the possibility that every hyperplane section of the quadric cone that is tangent to the curve defined by $f_{3}=0$ has a cuspidal curve as a preimage.  The hyperplane sections of the quadric cone have equations of the form $a_{20}x_{0}^{2} + a_{11}x_{0}x_{1} + a_{02}x_{1}^{2} + cy$.  Since we are interested in the general tangent plane, without loss of generality we may suppose that $c=1$.  Eliminating $y$ and computing the discriminant, we see that a plane will be tangent to $f_{3}=0$ precisely when $a_{20} = a_{11}a_{02}$.  When this condition is met, the tangency point is $(x_0 : x_1 :  y) = (1:a_{11}^{1/2}:a_{11}^{3/2})$.

From now on we will work on the affine patch $x_{0} \neq 0$ isomorphic to $\mathbb{A}^{2}$.
Locally near the point $p = (a_{11}^{1/2},a_{11}^{3/2})$ the curve admits the rational parametrization
\begin{equation*}
t \mapsto \left( t + a_{11}^{1/2} , a_{02}t^{2} + a_{11}t + a_{11}^{3/2} \right)
\end{equation*}
sending $0 \mapsto p$.
Pulling back the defining equation for the double cover to this rational curve, we see that the preimage is defined by the equation
\begin{equation*}
w^{2} + w(t^{3} + (a_{11}^{1/2} + a_{02})t^{2}) + \widetilde{f_{6}}(t)
\end{equation*}
where the constant term of $\widetilde{f_{6}}$ is
\begin{align*}
b_{600} + a_{11}^{1/2}b_{510} + a_{11}b_{420} + a_{11}^{3/2}(b_{330} + b_{401}) + a_{11}^{2}(b_{240} + b_{311}) + a_{11}^{5/2}(b_{150} + b_{221}) + \\
a_{11}^{3}(b_{060} + b_{131} + b_{202}) + a_{11}^{7/2}(b_{041} + b_{112}) + a_{11}^{4}b_{022} + a_{11}^{9/2}b_{003}
\end{align*}
and the linear coefficient is 
\begin{align*}
b_{510} + a_{11}(b_{330} + b_{401}) + a_{11}^{2}(b_{150} + b_{221}) + a_{11}^{3}(b_{112} + b_{041}) + a_{11}^{4}b_{003}.
\end{align*}
Note that this double cover of $\mathbb{P}^{1}$ defines a cuspidal curve if and only if the constant and linear coefficients vanish.  If a general hyperplane section (i.e.~a general choice of $a_{11},a_{02}$) defines a cuspidal curve, we must have
\begin{align*}
b_{600} = b_{510} = \, & \, b_{420} = b_{022} = b_{003} = 0 \\
b_{330} = b_{401} \qquad b_{150} =  b_{221} & \qquad b_{112} = b_{041} \qquad b_{240} = b_{311} \\
b_{060} + b_{131} & + b_{202} = 0
\end{align*}
Equivalently, we have $f_{6} = (x_{0}y - x_{1}^{3}) g_{3}$ for some cubic equation $g_{3}$.  Thus the defining equation has the form
\[
w^2 + (x_0y - x_1^3)w + (x_0y - x_1^3)g.
\]
Then replacing $w$ by $w + g$, the equation becomes 
\[
w^2 + (x_0y - x_1^3)w + g^2.
\]
Replacing $w$ by $w + c(x_0y - x_1^3)$, we may assume that $g$ is a homogeneous polynomial in $x_0, x_1$. Then $(x_0:x_1:y:w) = (0:0:1:0)$ is a singular point of the surface.
On the patch where $y = 1$, locally analytically the equation looks like
\[
w^2 + (x_0 - x_1^3)w + g(x_0, x_1)^2
\]
in $\mathbb A^3/\mu_2$.

Now we will prove that the singularity at $(0,0,0)$ is worse than canonical.
First we claim that $\mathbb A^3/\mu_2$ has a terminal singularity.
Indeed, $\mathbb A^3/\mu_2$ is isomorphic to $$\Spec (k[x_0^2, x_1^2, w^2, x_0x_1, x_0w, x_1w]).$$
Let $\beta : W \to \mathbb A^3/\mu_2$ be the blow up of the origin and $E$ be the exceptional divisor. Then the discrepancy of $E$ is $1/2$.  
Let $\widetilde{S} \subset W$ be the strict transform of $S' \subset \mathbb A^3/\mu_2$.
Since the equation for $S'$ has no constant term and no odd degree monomial term, we can conclude that $S'$ is Cartier in $\mathbb A^3/\mu_2$ so that $\beta^*S' = \widetilde{S} + nE$ with a positive integer $n$.
Then the discrepancy of $E\cap \widetilde{S}$ in $\widetilde{S}$ over $S'$ is $\frac{1}{2} - n$ which is negative, proving the claim.
But the anticanonical model $S'$ must have only canonical singularities, and we conclude that any $S'$ of this type cannot admit a $2$-dimensional family of rational $-K_{X}$-conics.

\textbf{Case 3b':}  The other possibility is that $\ch(k) = 2$ and that $f_{3}$ is reducible or non-reduced.  Then by applying the automorphism group of $\mathbb{P}(1,1,2)$ we may suppose that either
\begin{equation*}
f_{3} = x_{0}y \qquad \qquad \textrm{or} \qquad \qquad f_{3} = g(x_{0},x_{1})
\end{equation*}
for some cubic $g$.
First suppose $f_{3} = x_{0}y$.
Writing as before $a_{20}x_{0}^{2} + a_{11}x_{0}x_{1} + a_{02}x_{1}^{2} + cy$ for the equation of a general hyperplane section, we see that the sections tangent to $f_{3} = 0$ will satisfy either $a_{02} = 0$, $c=0$, or $a_{11} = 0$.  The tangent point for every hyperplane section satisfying $a_{02} = 0$ is the point $(0:1:0)$; since we have already ruled out the case where each curve goes through the same point, we cannot get a $2$-dimensional family in the first case. In the second case the hyperplane section is not integral, thus we do not need to consider this case. It only remains to consider the case $a_{11}=0$.

Assuming that $a_{11} = 0$, then arguing as before we see that if every tangent to $f_{3}=0$ yields a cuspidal curve then $f_{6}$ is divisible by $y$.  We next need to check whether every member of this $2$-dimensional family of cuspidal curves on $S$ is rational.  Since $c \neq 0$, by rescaling we may suppose without loss of generality that $c=1$ so that $y = (\alpha x_{0} + \beta x_{1})^{2}$ where $\alpha = a_{20}^{1/2}$ and $\beta = a_{02}^{1/2}$.  We then eliminate the variable $y$ so that the equation of our singular curve in $\mathbb{P}_{x_{0},x_{1},w}(1,1,3)$ is given by
\begin{equation*}
w^{2} + x_{0}(\alpha x_{0} + \beta x_{1})^{2} w + (\alpha x_{0} + \beta x_{1})^{2} g_{4}(x_{0},x_{1})
\end{equation*}
for some degree $4$ equation $g$.  Since these curves have arithmetic genus $2$, if they are rational then they must have at least one other singularity besides the cusp.  (It is not possible for the equation above to define a worse-than-cuspidal singularity.)
The only option is that we have a singularity at $x_{0} = 0$, in which case $g_4$ must be divisible by $x_0^2$ for any $\alpha, \beta$.  This implies that
\[
b_{003} = b_{022} = b_{041} = b_{060} = b_{112} = b_{131} = b_{150} = 0.
\]
Hence we conclude that $f_6$ is divisible by $x_0^2y$.
 Thus the point $(0:0:1:0)$ in $\mathbb{P}(1,1,2,3)$ is contained in $S$ and this is a singularity worse than canonical by the argument above.

Suppose that $f_3$ is reduced but $f_{3}$ is a union of three lines.  Then the only hyperplane sections which are tangent to $f_{3} = 0$ will go through one of the singular points.  But we have already ruled out the case where each curve goes through the same point, so this situation cannot give a $2$-dimensional family of rational curves. 

When $f_3$ is non-reduced, we may assume that $f_3 = x_0^2x_1$ or $x_0^3$.
When $f_3 = x_0^2x_1$, a hyperplane section tangent to $f_3$ is given by
\[
a_{20}x_0^2 + a_{11}x_0x_1 + a_{02}x_1^2 = y
\]
such that the tangent point is given by $(0:1:a_{02})$.
If we can find a $2$-dimensional family of rational curves such that $a_{02}$ is fixed, then we can conclude a contradiction as before. So we may assume that $a_{02}$ is generic. Then a rational parametrization of the above section is given by
\[
(x_0:x_1:y) = (t:s:a_{20}t^2 + a_{11}ts + a_{02}s^2).
\]
For the resulting rational curve
\[
w^2 + t^2sw +f_6(t, s, a_{20}t^2 + a_{11}ts + a_{02}s^2)
\]
to have a cusp, by arguing as above we see that $f_6$ is divisible by $x_0^2$.  Thus the point $(0:0:1:0)$ is contained in $S$ and this is a singularity worse than canonical by the argument above.

Finally assume that $f_3 = x_0^3$. In this case by arguing as the case of $f_3 = x_0^2x_1$, we may conclude that $f_6$ is divisible by $x_0^2$. Thus one can deduce a contradiction as before.
\end{proof}

Altogether Lemma \ref{lemm:(-2)-curve}, Lemma \ref{lemm:lines}, and Lemma \ref{lemm:degree2curves} immediately imply Theorem \ref{theo:expectdimlowdegree}.

\subsection{Pathological weak del Pezzo surfaces}
\label{subsec:examples}

We next classify the weak del Pezzo surfaces $S$ which admit a dominant family of rational curves of low degree which has larger than the expected dimension.  In view of later applications, we will split these surfaces into $3$ different types.  We emphasize that these three types are not mutually exclusive.

Our description will be based upon \cite{KN20} and \cite{KN20b} which  classify the weak del Pezzo surfaces in characteristics $2$ and $3$ such that the anticanonical model has Picard rank $1$.  \cite[Table 1]{KN20b} gives a complete list of such surfaces based on the type of singularities of the anticanonical model; we will refer to this table in our discussion.

\subsubsection{Type 1} The first type of pathological weak del Pezzo surface will satisfy the following equivalent conditions:

\begin{clai} \label{clai:type1}
Let $S$ be a weak del Pezzo surface.  Then the following are equivalent:
\begin{enumerate}
\item $S$ admits a dominant family of rational curves of anticanonical degree $1$.
\item $S$ is a weak del Pezzo surface of degree $1$ such that every element of $|-K_{S}|$ is singular.
\item $S$ is a weak del Pezzo surface of degree $1$ and if we blow-up the basepoint of $|-K_{S}|$ we obtain a quasi-elliptic fibration.
\end{enumerate}
In particular such surfaces can only occur in characteristic $2$ or $3$.
\end{clai}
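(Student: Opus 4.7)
The plan is to prove the cycle (1) $\Rightarrow$ (3) $\Rightarrow$ (2) $\Rightarrow$ (1), using Lemma~\ref{lemm:lines} as the main input.

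For (1) $\Rightarrow$ (3), suppose $M$ is a dominant component of $\overline{\mathrm{Rat}}(S)$ generically parametrizing birational maps to curves $C$ satisfying $-K_{S} \cdot C = 1$.  Lemma~\ref{lemm:lines} enumerates exactly three possibilities.  Case (1) of that lemma yields a $(-1)$-curve, which is rigid, so $M$ is zero-dimensional and cannot dominate the surface.  Case (2) has $\dim(M) = 0$ by hypothesis, again ruling out dominance.  Hence we must be in case (3) of that lemma, which is precisely condition (3) of the claim.

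The implication (3) $\Rightarrow$ (2) is immediate from the definition of a quasi-elliptic fibration: every fiber is an irreducible cuspidal, hence singular, rational curve, and the members of $|-K_S|$ correspond bijectively to these fibers via the blow-down of the exceptional divisor over the basepoint.  Therefore every member of $|-K_S|$ is singular.

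For (2) $\Rightarrow$ (1), recall that on a weak del Pezzo surface of degree $1$ the anticanonical linear system $|-K_{S}|$ is a pencil with a unique basepoint.  After blowing up that point, the proper transform linear system is base-point-free and defines a morphism $\widetilde{S} \to \mathbb{P}^1$ whose generic fiber is a geometrically integral curve of arithmetic genus $1$.  Under hypothesis (2) this generic fiber is singular, and a singular integral curve of arithmetic genus one is rational.  The pencil thus produces a one-dimensional family of rational curves of anticanonical degree $1$ sweeping out $S$, yielding a dominant component of $\overline{\mathrm{Rat}}(S)$.

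Finally, the characteristic restriction follows from the classical Tate--Bombieri--Mumford theorem: a quasi-elliptic fibration on a smooth projective surface can exist only when $\ch(k) \in \{2, 3\}$.  The one subtle step in this outline is verifying that the generic member of $|-K_S|$ in the proof of (2) $\Rightarrow$ (1) is geometrically integral rather than merely connected; this is standard for the anticanonical pencil on a weak del Pezzo of degree $1$, since $(-K_S)^2 = 1$ together with the nefness of $-K_S$ preclude a nontrivial fixed part and the induced fibration $\widetilde{S} \to \mathbb{P}^1$ has connected fibers.
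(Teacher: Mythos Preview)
Your argument is correct and follows the same route as the paper's proof, with Lemma~\ref{lemm:lines} doing the real work; the paper organizes the equivalences as (1)$\Leftrightarrow$(2) and (2)$\Leftrightarrow$(3) rather than a cycle, but this is cosmetic.

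Two small points are worth tightening. In (3)$\Rightarrow$(2) you assert that every fiber of a quasi-elliptic fibration is an irreducible cuspidal curve; only the \emph{general} fiber is, and special fibers may be reducible. The conclusion survives since reducible (or non-reduced) fibers are still singular, but the phrasing should be adjusted. More substantively, in (2)$\Rightarrow$(1) you pass directly from ``every member of $|-K_S|$ is singular'' to ``the generic fiber of the blown-up pencil is singular.'' This needs the observation that no member of $|-K_S|$ can be singular \emph{at the basepoint} $p$: if $C \in |-K_S|$ had multiplicity $\geq 2$ at $p$, then for any other member $C'$ the local intersection number at $p$ would already exceed $C \cdot C' = (-K_S)^2 = 1$. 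Hence the singularities lie away from $p$, the blow-up does not resolve them, and every closed fiber of $\widetilde{S} \to \mathbb{P}^1$ remains singular, forcing the generic fiber to be singular by openness of the smooth locus. The paper isolates exactly this intersection-number step as the one nontrivial ingredient in the (2)$\Leftrightarrow$(3) direction.
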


\begin{proof}
The equivalence of (1) and (2) is an immediate consequence of Lemma \ref{lemm:lines}.  To prove the equivalence of (2) and (3), we just need to note that if every element of $|-K_{S}|$ is singular then it is not possible for all the singularities to coincide (since the intersection number of two curves in the family should be $1$).
\end{proof}

Over an algebraically closed field of characteristic $2$ or $3$, such surfaces have been classified by \cite[Theorem 1.4]{KN20}.  In characteristic $2$ there are $3$ infinite families and $4$ other surfaces whose singularity types are:
\begin{equation*}
E_{8}, D_{8}, A_{1}+E_{7},  2A_{1}+D_{6}, 2D_{4}, 4A_{1}+D_{4}, 8A_{1}
\end{equation*}
In characteristic $3$ there are three surfaces whose singularity types are:
\begin{equation*}
E_{8}, A_{2}+E_{6}, 4A_{2}
\end{equation*}

\subsubsection{Type 2} The second type of pathological weak del Pezzo surface will satisfy the following equivalent conditions:

\begin{clai} \label{clai:type2}
Let $S$ be a weak del Pezzo surface.  Then the following are equivalent:
\begin{enumerate}
\item $S$ has degree $2$ and admits a family of rational curves of anticanonical degree $2$ with larger than the expected dimension.
\item $S$ has degree $2$ and every element of $|-K_{S}|$ is singular.
\item $|-K_{S}|$ defines a purely inseparable morphism $g: S \to \mathbb{P}^{2}$.
\item The anticanonical model $S'$ of $S$ admits a purely inseparable map $f: \mathbb{P}^{2} \to S'$ of degree $2$ such that $f^{*}(-K_{S}') \cong \mathcal{O}(2)$.
\end{enumerate}
In particular such surfaces can only occur in characteristic $2$.
\end{clai}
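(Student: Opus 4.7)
The plan is to establish the chain of equivalences $(1) \Leftrightarrow (2) \Leftrightarrow (3) \Leftrightarrow (4)$; the characteristic restriction then follows automatically, since a purely inseparable morphism of degree $2$ forces $\mathrm{char}(k) = 2$.

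For $(1) \Leftrightarrow (2)$, I will invoke Lemma \ref{lemm:degree2curves}. On a degree $2$ weak del Pezzo surface, a family of rational curves of anticanonical degree $2$ with higher than expected dimension forces $C^2 = 2$ and hence $C \in |-K_S|$ (Case $(2)$ of that lemma). Since $\dim |-K_S| = 2$ while the expected dimension of $\overline{M}_{0,0}(S,-K_S)$ equals $(-K_S)^2 - 1 = 1$, a higher-dimensional family of rational curves in $|-K_S|$ must sweep out the entire linear system. Combined with the adjunction formula $p_a(C) = 1$ for $C \in |-K_S|$, this is equivalent to every member of $|-K_S|$ being singular (i.e., a singular rational curve rather than a smooth elliptic curve).

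For $(2) \Leftrightarrow (3)$, recall that on a degree $2$ weak del Pezzo surface $|-K_S|$ defines a degree $2$ morphism $g : S \to \mathbb P^2$ factoring through the anticanonical model. If $g$ is separable, then by \cite[Proposition 4.4]{KN20} a general member of $|-K_S|$ is smooth. Conversely, if $g$ is inseparable, then being of degree $2$ it must be purely inseparable (forcing $\mathrm{char}(k) = 2$); for every line $\ell \subset \mathbb P^2$ the restriction $g|_C : C \to \ell$ with $C = g^*\ell$ is still purely inseparable of degree $2$, and since a smooth arithmetic genus $1$ curve admits no purely inseparable degree $2$ map to $\mathbb P^1$ (such a map would be Frobenius, forcing genus $0$), every such $C$ is singular.

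For $(3) \Leftrightarrow (4)$, I would use Frobenius factorization in characteristic $2$. Given $g : S' \to \mathbb P^2$ purely inseparable of degree $2$, the absolute Frobenius $F_{\mathbb P^2}$ (of degree $4$) factors as $F_{\mathbb P^2} = g \circ f$ for a unique purely inseparable morphism $f : \mathbb P^2 \to S'$ of degree $2$; the existence of $f$ follows from the chain of function-field inclusions $k(\mathbb P^2)^2 \subset g^*k(S') \subset k(\mathbb P^2)$ together with $k(S') \cong g^*k(S')$. Then $f^*(-K_{S'}) = f^*g^*\mathcal O(1) = F_{\mathbb P^2}^*\mathcal O(1) = \mathcal O(2)$, giving $(4)$; explicitly, writing $S'$ as $w^2 = f_4(x,y,z) \subset \mathbb P(1,1,1,2)$, one can take $f(s:t:u) = (s^2:t^2:u^2:h(s,t,u))$ with $h^2 = f_4(s^2,t^2,u^2)$ (which exists over algebraically closed $k$). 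For the converse, the analogous factorization applied to $f$ produces $g : S' \to \mathbb P^2$ of degree $2$ with $g \circ f = F_{\mathbb P^2}$, and the pullback identity $f^*g^*\mathcal O(1) = \mathcal O(2) = f^*(-K_{S'})$ together with injectivity of $f^*$ on $\mathrm{Pic}(S')$ modulo torsion (trivial for a weak del Pezzo) forces $g^*\mathcal O(1) = -K_{S'}$. Thus $g$ is the morphism defined by $|-K_{S'}|$, and composing with the resolution $S \to S'$ yields $(3)$. The main technical obstacle is the Frobenius-factorization step, where I will need to verify carefully that the standard height-$1$ factorization of Frobenius is compatible with the line bundle identification $f^*(-K_{S'}) = \mathcal O(2)$ in both directions.
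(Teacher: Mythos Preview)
Your proposal is correct and largely parallels the paper's proof, but with two organizational differences worth noting.

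First, the paper proves the cyclic chain $(1)\Rightarrow(2)\Rightarrow(3)\Rightarrow(4)\Rightarrow(1)$, whereas you prove the pairwise equivalences $(1)\Leftrightarrow(2)$, $(2)\Leftrightarrow(3)$, $(3)\Leftrightarrow(4)$. The content for $(1)\Rightarrow(2)$, $(2)\Rightarrow(3)$, and $(3)\Rightarrow(4)$ is essentially identical in both (Lemma~\ref{lemm:degree2curves}, the input from \cite[Proposition 4.4]{KN20}, and Frobenius factorization respectively). Your extra direct arguments for $(2)\Rightarrow(1)$ and $(3)\Rightarrow(2)$ are fine but redundant once the cycle closes.

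Second, and more substantively, the paper closes the cycle with $(4)\Rightarrow(1)$: it pushes the lines of $\mathbb{P}^{2}$ forward under $f$ to obtain a $2$-dimensional family of $-K_{S}$-conics on $S'$, using Lemma~\ref{lemm:lines} to rule out the possibility that a line maps $2:1$ onto a $-K_{S}$-line. You instead prove $(4)\Rightarrow(3)$ by running the Frobenius factorization in reverse: from $f:\mathbb{P}^{2}\to S'$ purely inseparable of degree $2$ you extract $g:S'\to\mathbb{P}^{2}$ with $g\circ f=F_{\mathbb{P}^{2}}$, then identify $g$ with the anticanonical morphism via injectivity of $f^{*}$ on $\mathrm{Pic}$. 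This is a perfectly good alternative; the paper's route is slightly more geometric and avoids the verification that the rational map $g$ extends to a morphism on the possibly singular $S'$ (which you correctly flag as the technical point needing care). Either argument suffices.
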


\begin{proof}
Lemma \ref{lemm:degree2curves} shows that the only possible family of conics with larger than expected dimension on a weak del Pezzo surface of degree $2$ must lie in $|-K_{S}|$.  Since this linear series has dimension $2$, we see that (1) implies (2). Lemma \ref{lemm:degree2curves} shows that (2) implies (3).

We next show that (3) implies (4).  Let $S'$ denote the anticanonical model of $S$.  (3) asserts that $|-K_{S}|$ defines a purely inseparable morphism $g: S \to \mathbb{P}^{2}$.  This necessarily implies that $S$ has degree $2$ and that $g$ has degree $2$.  The Stein factorization of $g$ will be the anticanonical model $S'$ of $S$; in other words, $S'$ will be the normalization of $\mathbb{P}^{2}$ inside the function field of $S$.  Since $K(S)$ is obtained by adjoining a single square root to $K(\mathbb{P}^{2})$, we see that the Frobenius morphism $\mathbb{P}^{2} \to \mathbb{P}^{2}$ factors through $g$.  In this way we obtain a purely inseparable degree $2$ map $f: \mathbb{P}^{2} \to S'$.  Furthermore we have
\begin{equation*}
f^{*}(-K_{S'}) = f^{*}g^{*}\mathcal{O}(1) = \mathcal{O}(2).
\end{equation*}

Finally, we show that (4) implies (1).  By taking intersection numbers it is clear that $S$ has degree $2$.  The lines on $\mathbb{P}^{2}$ will map to a two-dimensional family of curves on $S'$ of anticanonical degree $\leq 2$ with no basepoints.  Since there is no such family on $S'$ of anticanonical degree $1$ by Lemma \ref{lemm:lines}, we see that the restriction of $f$ to each line is birational.  Thus $S$ admits a two-dimensional family of rational curves of anticanonical degree $2$.
\end{proof}

Over an algebraically closed field of characteristic $2$, the surfaces satisfying (2) have been classified by \cite[Theorem 1.4]{KN20}.  There are exactly $4$ such surfaces, whose singularity types are:
\begin{equation*}
E_{7}, A_{1}+D_{6},  3A_{1} + D_{4}, 7A_{1}
\end{equation*}

\subsubsection{Type 3}  The third type of pathological weak del Pezzo surface will satisfy the following equivalent conditions:

\begin{clai} \label{clai:type3}
Let $S$ be a weak del Pezzo surface.  Then the following are equivalent:
\begin{enumerate}
\item $S$ has degree $1$ and every element of $|-2K_{S}|$ is a singular rational curve.
\item $S$ has degree $1$ and the morphism to the quadric cone $g: S \to Q$ defined by $|-2K_{S}|$ is purely inseparable.
\item The anticanonical model $S'$ of $S$ admits a purely inseparable map $f: Q \to S'$ of degree $2$ from the quadric cone $Q$ such that $f^{*}(-K_{S}') \cong \mathcal{O}(1)$.
\end{enumerate}
In particular such surfaces can only occur in characteristic $2$.
\end{clai}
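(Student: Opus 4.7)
The plan is to establish the cyclic implications $(1) \Rightarrow (2) \Rightarrow (3) \Rightarrow (1)$, paralleling the proof of Claim~\ref{clai:type2}. Throughout, since $S$ has degree $1$, the anticanonical ring realizes $S'$ as a sextic hypersurface in $\mathbb{P}(1,1,2,3)$, and $|-2K_S|$ is base-point free of dimension $3$ and defines a degree $2$ morphism $g: S \to Q$ onto the quadric cone $Q = \mathbb{P}(1,1,2)$ with $g^*\mathcal{O}_Q(1) = -2K_S$ (here $\mathcal{O}_Q(1)$ denotes the ample generator of $\mathrm{Pic}(Q) \cong \mathbb{Z}$). The ``in particular'' statement is automatic since a purely inseparable morphism of degree $2$ can only exist in characteristic $2$.

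For $(1) \Rightarrow (2)$, I would first observe that (1) yields a $3$-dimensional family of irreducible rational curves in $|-2K_S|$, which exceeds the expected dimension $1$. Lemma~\ref{lemm:degree2curves}(4) then offers two alternatives: either $|-K_S|$ defines a quasi-elliptic fibration or the map $g$ is inseparable. The quasi-elliptic alternative is excluded by the observation that any two distinct members $D_1, D_2$ of the pencil $|-K_S|$ produce a reducible element $D_1 + D_2 \in |-2K_S|$, contradicting that every element is an irreducible singular rational curve. Hence $g$ is inseparable, and having degree $2$, it is purely inseparable.

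For $(2) \Rightarrow (3)$, let $\pi: S \to S'$ be the contraction to the anticanonical model; the morphism $g = g_1 \circ \pi$ factors with $g_1: S' \to Q$ still purely inseparable of degree $2$. In characteristic $2$ we have $K(S')^2 \subseteq K(Q)$, so the Frobenius $F_{S'}$ factors as $f \circ g_1$ for a purely inseparable rational map $f: Q \to S'$ of degree $2$. Because $g_1$ is finite and $Q$ is normal, a graph-closure argument using that $F_{S'}$ is everywhere defined shows $f$ extends to a morphism on all of $Q$. Applying $g_1^* \circ f^* = F_{S'}^*$ to $-K_{S'}$ yields $g_1^*(f^*(-K_{S'})) = -2K_{S'} = g_1^*\mathcal{O}_Q(1)$, and since $\mathrm{Pic}(Q)$ is torsion-free the map $g_1^*$ is injective on Picard groups, so $f^*(-K_{S'}) \cong \mathcal{O}_Q(1)$.

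For $(3) \Rightarrow (1)$, one computes $(-K_{S'})^2 = \tfrac{1}{2}(f^*(-K_{S'}))^2 = \tfrac{1}{2}\mathcal{O}_Q(1)^2 = 1$, so $S$ has degree $1$. The reverse Frobenius factorization (applied to $f$) produces a purely inseparable degree $2$ morphism $g_1: S' \to Q$ with $g_1 \circ f = F_Q$ and $g_1^*\mathcal{O}_Q(1) = -2K_{S'}$; the composition $g = g_1 \circ \pi$ is then the morphism $S \to Q$ defined by $|-2K_S|$. For a general hyperplane section $H \in |\mathcal{O}_Q(1)|$ (a smooth conic), the pullback $g^*H \in |-2K_S|$ restricts purely inseparably of degree $2$ onto $H \cong \mathbb{P}^1$, so its normalization is $\mathbb{P}^1$; having arithmetic genus $2$, it is a singular rational curve. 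Since $g^*: |\mathcal{O}_Q(1)| \to |-2K_S|$ is a bijection of $3$-dimensional linear systems, every (irreducible) element of $|-2K_S|$ arises this way, establishing (1). The main obstacle I anticipate is the bookkeeping around Frobenius factorization---especially extending the rational map $f$ across the vertex of $Q$ and correctly identifying $\mathcal{O}_Q(1)$ as the ample Picard generator (which under $Q \cong \mathbb{P}(1,1,2)$ corresponds to $\mathcal{O}_{\mathbb{P}(1,1,2)}(2)$, not $\mathcal{O}_{\mathbb{P}(1,1,2)}(1)$, the latter being only a Weil divisor class).
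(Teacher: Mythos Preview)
Your $(2)\Rightarrow(3)$ and $(3)\Rightarrow(1)$ arguments are essentially correct and close to the paper's, but the $(1)\Rightarrow(2)$ step has a genuine gap.

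The problem is your exclusion of the quasi-elliptic alternative. You argue that distinct $D_1,D_2\in|-K_S|$ give a reducible $D_1+D_2\in|-2K_S|$, contradicting (1). But $|-K_S|$ is \emph{always} a pencil on a degree~$1$ weak del Pezzo, so $|-2K_S|$ always contains such reducible members, independent of whether $|-K_S|$ is quasi-elliptic. If you read (1) as ``every element is an \emph{irreducible} singular rational curve,'' then (1) is vacuously false and the claim collapses; if you read (1) as the paper does---the general element is a singular rational curve (equivalently, every element is singular)---then the existence of reducible members is no contradiction at all, and your argument does not eliminate the possibility that $|-K_S|$ is quasi-elliptic while $g$ remains separable. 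Lemma~\ref{lemm:degree2curves}(4) gives you ``quasi-elliptic \emph{or} inseparable,'' not an exclusive or, and its proof explicitly assumes both \emph{non}-quasi-elliptic and separable to bound the dimension, so it gives no leverage in the mixed case.

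The paper avoids this entirely by arguing directly: assuming $g$ separable, one stratifies $S$ (off the $(-2)$-curves and the preimage of the singular locus of the branch divisor) into locally closed pieces on which $g$ is smooth, and then invokes a Bertini-type transversality result (\cite[Corollary~4.6]{Spreafico}) to conclude that the pullback of a general hyperplane section of $Q$ is smooth. A smooth member of $|-2K_S|$ has genus~$2$, contradicting (1). This line of argument works uniformly and never touches the quasi-elliptic question.
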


\begin{proof}
We first show that (1) implies (2).  Suppose for a contradiction that the morphism $g: S \to Q$ is separable.  Note that $g$ is finite on the complement of the $(-2)$-curves in $S$.  Let $U \subset S$ be the open set which is the complement of the $(-2)$-curves and the preimage of the singular locus of the branch divisor.  Then $U$ admits a decomposition into locally closed subsets $L_{1},L_{2}$ where $L_{1}$ is the ramification divisor and $L_{2}$ is its complement.  By construction $L_{1},L_{2}$ are smooth and the restriction of $g$ to both $L_{1}$ and $L_{2}$ is a smooth morphism.  By \cite[Corollary 4.6]{Spreafico} we conclude that the pullback of a general hyperplane in $Q$ to $U$ will be smooth.  Since we only removed the $(-2)$-curves and a codimension $2$ set, the general pullback of a hyperplane in $U$ is also projective.  Altogether we see that a general element in $|-2K_{S}|$ is smooth, contradicting (1).

The proof that (2) implies (3) follows from a Frobenius factoring argument as in Claim \ref{clai:type2}.  

Finally, to see that (3) implies (1) we note that $Q$ admits a $3$-dimensional family of rational curves of degree $2$.  These map to a $3$-dimensional family of rational curves on $S'$ of anticanonical degree $\leq 2$ with no basepoints.  Since by Lemma \ref{lemm:lines} there is no such family on $S'$ of anticanonical degree $1$, we see that the restriction of $f$ to each rational curve is birational.  Lemma \ref{lemm:degree2curves} shows that a three-dimensional family of rational curves of anticanonical degree $2$ must lie in $|-2K_{S}|$ and we conclude that all the curves parametrized by $|-2K_{S}|$ are singular.
\end{proof}

It turns out that Type 3 surfaces are exactly the same as the Type 1 surfaces which have characteristic $2$.  We will demonstrate this in Proposition \ref{prop:type3istype1} after developing the theory of $a$-covers.

For now, it will suffice to show that every Type 3 surface also has Type 1.  Suppose that $Y$ is a Type 3 surface so that its anticanonical model $S'$ admits a purely inseparable degree $2$ map $f: Q \to S'$ from the quadric cone.   Each line on $Q$ maps birationally onto a $-K_{S}$-line in $S'$.  In particular by pulling back to $S$ we see there must be a one-dimensional family of $-K_{S}$-lines.  Thus Type 3 surfaces are a subclass of Type 1 surfaces.

Combining this classification with Theorem \ref{theo:expectdimlowdegree}, we obtain:

\begin{coro} \label{coro:lowdegreeexpectdimanticanonical}
Let $S$ be a weak del Pezzo surface.  If $S$ carries a dominant family of rational curves of anticanonical degree $\leq 2$ with larger than the expected dimension then $S$ has Type 1, Type 2, or Type 3.  In particular every curve parametrized by $|-K_{S}|$ is singular.
\end{coro}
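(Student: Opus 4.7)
The plan is to combine Theorem \ref{theo:expectdimlowdegree} with the equivalences established in Claims \ref{clai:type1}, \ref{clai:type2}, and \ref{clai:type3}. First, I would observe that a dominant family of rational curves on $S$ cannot consist of $(-2)$-curves, since each such curve is rigid in the moduli space. Thus if $S$ has a dominant family of rational curves of anticanonical degree $\leq 2$ with larger than expected dimension, then $S$ must lie in one of the four exceptional cases $(1)$--$(4)$ of Theorem \ref{theo:expectdimlowdegree}.

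The next step is to match these exceptions with the Types. Three of the matches are essentially tautological given how the claims are phrased: Theorem \ref{theo:expectdimlowdegree}(1) coincides with condition $(3)$ of Claim \ref{clai:type2} (giving Type 2), Theorem \ref{theo:expectdimlowdegree}(2) coincides with condition $(2)$ of Claim \ref{clai:type1} (giving Type 1), and Theorem \ref{theo:expectdimlowdegree}(3) coincides with condition $(2)$ of Claim \ref{clai:type3} (giving Type 3).

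The main obstacle is exception $(4)$: here $S$ is a degree $1$ weak del Pezzo admitting a birational morphism $\phi: S \to \widetilde{S}$ (contracting a single $(-1)$-curve $E$) to a Type 2 weak del Pezzo $\widetilde{S}$. I would argue that such $S$ is necessarily of Type 1 by showing that every element of $|-K_S|$ is singular. Since $\widetilde{S}$ has Type 2, Claim \ref{clai:type2}(2) ensures that every $\widetilde{D} \in |-K_{\widetilde{S}}|$ is singular. Using the identity $-K_S = \phi^{*}(-K_{\widetilde{S}}) - E$, every member $D$ of $|-K_S|$ is either the strict transform $\widetilde{D}'$ of some $\widetilde{D} \in |-K_{\widetilde{S}}|$ having multiplicity one at the blow-up center $p$ of $\phi$, or else contains $E$ as a component. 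In the first case $\widetilde{D}'$ is abstractly isomorphic to the singular curve $\widetilde{D}$ (since $\phi$ blows up a smooth point of $\widetilde{D}$) and hence is singular; in the second case $D$ is reducible and hence singular. Therefore every element of $|-K_S|$ is singular, placing $S$ in Type 1 via Claim \ref{clai:type1}(2).

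The concluding assertion of the corollary, that every curve parametrized by $|-K_S|$ is singular, is then immediate from the classification: for Types 1 and 2 this is precisely condition $(2)$ of Claim \ref{clai:type1} and Claim \ref{clai:type2} respectively, and for Type 3 it follows from the remark immediately preceding the corollary that every Type 3 surface is also of Type 1.
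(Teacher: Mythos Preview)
Your proof is correct and follows essentially the same approach as the paper. The paper's version is slightly terser in handling exception (4): rather than splitting on whether $E$ is a component of $D \in |-K_S|$, it simply observes that every integral member of $|-K_{\widetilde{S}}|$ is rational (being singular of arithmetic genus $1$), hence so is every integral member of $|-K_S|$, which gives Type~1 via Claim~\ref{clai:type1}.
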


\begin{proof}
Let us start with the first claim.  According to Lemma \ref{lemm:lines} and Lemma \ref{lemm:degree2curves} the only case which needs consideration is when $S$ is a weak del Pezzo surface with degree $1$ that admits a birational map $\phi: S \to T$ to a surface of Type 2 that contracts a $(-1)$-curve.  In this case every integral member of $|-K_{T}|$ is a rational curve.  Thus $S$ also has this same property, and in particular must have Type $1$.

Note that both Type 1 and Type 2 surfaces have the property that every element of $|-K_{S}|$ is singular.  Furthermore, every Type 3 surface also has Type 1 and thus has the same property.  This proves the second claim.
\end{proof}

\section{Low degree curves and inseparable families} \label{sect:lowdegree}

In this section first we focus our attention on del Pezzo surfaces $S$ of degree $\geq 2$.  Our goal is to classify the inseparable families of rational curves $C$ on $S$ which satisfy $-K_{S} \cdot C \leq 3$.

\begin{theo} \label{theo:separablefamlowdegree}
Let $S$ be a del Pezzo surface of degree $\geq 2$.  Let $M$ be a component of $\overline{M}_{0,0}(S)$ generically parametrizing a dominant family of curves $C$ with $-K_{S} \cdot C \leq 3$. When $d = 3$, we assume furthermore that $S$ is not the following exception:
\begin{enumerate}
\item $\ch(k)=2$ and $S$ is the Fermat cubic surface $w^{3}+x^{3}+y^{3}+z^{3}=0$.
\end{enumerate}
When $d = 2$, we assume furthermore that $S$ is not one of the following list of exceptions:
\begin{enumerate}
\setcounter{enumi}{1}
\item $\ch(k)=3$ and $S$ is the double cover of $\mathbb{P}^{2}$ ramified along the curve $zx^{3} + xy^{3} + yz^{3}$.
\item $\ch(k)=2$ and $S$ is a double cover of $\mathbb{P}^{2}$ defined by the equation $w^{2} + wy^2 + g_{4}$ where $g_4$ is a homogeneous polynomial in $x, y, z$. 
%\item $\ch(k)=2$ and $S$ is the blow-up of the Fermat cubic surface.
\end{enumerate}
Then $M$ parametrizes a separable family of curves
\end{theo}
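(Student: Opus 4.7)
The plan is to analyze dominant components of $\overline{M}_{0,0}(S)$ separately in each anticanonical degree $\leq 3$, combining the classification of low-degree rational curves from Section~\ref{sect:expdimlowdegree} with a normal-sheaf computation on the normalization of a generic member. Since $S$ is a del Pezzo of degree $d\geq 2$, there are no dominant components in anticanonical degrees $0$ or $1$: Lemma~\ref{lemm:(-2)-curve} rules out $(-2)$-curves on a strict del Pezzo, while Lemma~\ref{lemm:lines} shows that the options in degree $1$ are $(-1)$-curves (self-intersection $-1$, so not dominant) or situations forcing $d=1$. For anticanonical degree $2$, Lemma~\ref{lemm:degree2curves} together with the exclusion of exceptions (2) and (3) implies that a dominant component either parametrizes the fibers of a conic fibration or, only when $d=2$, a family of rational members of $|-K_S|$; under the assumption that $|-K_S|$ defines a separable map, \cite[Proposition 4.4]{KN20} implies that a general member of $|-K_S|$ is smooth elliptic, so the rational ones are nodal. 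In both subcases the normalization $f:\mathbb P^1 \to C$ has locally free normal sheaf $N_{f/S}=\mathcal{O}_{\mathbb P^1}$, which is globally generated, so $f$ is a free curve and the family is separable.

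For anticanonical degree $3$, Hodge index together with the arithmetic genus formula yields $C^2\in\{1,3\}$ and $p_a(C)\in\{0,1\}$ for an irreducible rational $C$, with $C^2=3$ occurring only when $d\in\{2,3\}$. When the generic $C$ is smooth or nodal, the normalization is immersive and $N_{f/S}$ is a line bundle of degree $-K_S\cdot C-2=1$, namely $\mathcal{O}(1)$, which is globally generated; the family is free and separable. Using the discussion in Section~\ref{subsec:node_cusp}, a cuspidal member in characteristic $\neq 2$ gives a torsion subsheaf of length $1$ and a free quotient of degree $0$, still globally generated, so once again the family is separable.

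The main obstacle, and the only remaining possibility, is the cuspidal case in characteristic $2$, which forces $d\in\{2,3\}$. Here the torsion subsheaf of $N_{f/S}$ has length $2$ and the free quotient is $\mathcal{O}(-1)$, so global generation genuinely fails and separability can be lost. To rule this out on the non-exceptional surfaces, the plan is to translate the existence of a $2$-dimensional dominant family of cuspidal rational curves of anticanonical degree $3$ into a structural constraint on the defining equation of $S$. For $d=3$, such a family forces the Gauss map $S\to(\mathbb P^3)^*$ to be purely inseparable; in characteristic $2$ this requires the four partial derivatives of the cubic defining $S$ to be squares of linear forms, and after rescaling this characterizes the Fermat cubic as in \cite[Example 5.5]{Hara98}. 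For $d=2$, the plan is to push a generic cuspidal member through the morphism defined by $|-K_S|$ and carry out the explicit tangency computation of Case~3b in the proof of Lemma~\ref{lemm:degree2curves}, deriving constraints on the defining equation $w^2+wf_2+f_4$ of the double cover that force $S$ to match exception (3). The separate characteristic $3$ exception for $d=2$ is isolated by a parallel but simpler computation at the branch quartic, identifying the curve $zx^3+xy^3+yz^3=0$ as the unique possibility.
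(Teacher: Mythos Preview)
Your overall structure matches the paper's—case-by-case in anticanonical degree, with the normal sheaf controlling separability—but the degree $2$ case contains a genuine gap that is exactly where the work lies.

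You write that for $d=2$, since $|-K_S|$ defines a separable map, \cite[Proposition 4.4]{KN20} gives a smooth general member, ``so the rational ones are nodal.'' This implication is false. The general member of $|-K_S|$ being smooth says nothing about whether the general \emph{singular} member is nodal or cuspidal. The family $M$ in question is the $1$-dimensional locus of singular members, and separability of $M$ requires the general \emph{singular} curve to have globally generated normal sheaf. A cuspidal member has $N_{f/S}=\mathcal O(-1)\oplus k(p)$ in characteristic $\neq 2$ and $\mathcal O(-2)\oplus k[t]/(t^2)$ in characteristic $2$, neither globally generated. So you must actually prove that the generic tangent line to the branch quartic $B$ is not a flex line (char $\neq 2$), respectively that the generic tangent-to-$g_2$ line does not yield a cusp (char $2$). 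This is precisely what singles out the Klein quartic in characteristic $3$ (via non-reflexivity, cf.\ \cite{Pardini86}) and the double-line branch in characteristic $2$; the paper carries this out in Lemma~\ref{lemm:degree2curvessep}. Invoking Lemma~\ref{lemm:degree2curves} does not help here: that lemma is about expected dimension, not about whether the general singular member is nodal.

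For degree $3$ your plan also diverges from the paper and has loose ends. For $d=3$ in characteristic $2$ the paper does not go through the Gauss map; it computes the $c_4$-invariant of the plane section directly, obtains $y_{0111}=y_{1011}=y_{1101}=y_{1110}=0$, and then uses an orbit-dimension count to identify the Fermat cubic. Your Gauss-map route may be salvageable, but the assertion that inseparability forces the partials to be squares of linear forms needs an argument, and \cite[Example 5.5]{Hara98} concerns F-splitting rather than this claim. For $d=2$ in characteristic $2$, the paper does \emph{not} compute on the double cover: it observes that $K_S+C$ is a $(-1)$-curve disjoint from $C$, reduces to the $d=3$ case, and then checks separately (via \cite{Homma97}, \cite{Saito17}) that any blow-up of the Fermat cubic lands in exception~(3). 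Your reference to ``Case~3b of Lemma~\ref{lemm:degree2curves}'' is to an unrelated computation on degree~$1$ weak del Pezzos in $|-2K_S|$.
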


To prove Theorem \ref{theo:separablefamlowdegree}, first note that by Lemma \ref{lemm:(-2)-curve} and Lemma \ref{lemm:lines} we only need to consider dominant families of rational curves of anticanonical degrees $2$ and $3$.  We will analyze each case separately.

\begin{lemm} \label{lemm:degree2curvessep}
Let $S$ be a del Pezzo surface of degree $d \geq 2$.  Suppose that $M$ is a component of $\overline{M}_{0,0}(S)$ that parametrizes a dominant family of rational curves $C$ with $-K_{S} \cdot C = 2$.  Then either:
\begin{enumerate}
\item the component $M$ parametrizes the fibers of a conic fibration and the general fiber is a free rational curve.
\item $d = 2$, $\dim(M) = 1$, and either
\begin{enumerate}
\item $M$ parametrizes a separable family of rational curves in $|-K_{S}|$, or
\item $\ch(k)=3$, $S$ is the double cover of $\mathbb{P}^{2}$ ramified along the curve $zx^{3} + xy^{3} + yz^{3}$, and $M$ is the dual curve of this quartic parametrizing an inseparable family of rational curves in $|-K_S|$, or
\item $\ch(k)=2$, $S$ is a double cover of $\mathbb{P}^{2}$ defined by the equation $w^{2} + wg_{2} + g_{4}$ where $g_{2} = \ell^{2}$ is a double line, and $M$ parametrizes the preimages of a $1$-dimensional family of lines in $\mathbb{P}^{2}$ where for any point $p \in V(\ell)$ the slope of the line through $p$ is determined by the derivatives of $g_{4}$ at $p$. This family is inseparable.
\end{enumerate}
\end{enumerate}
\end{lemm}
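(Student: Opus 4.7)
The plan is to reduce via Lemma \ref{lemm:degree2curves} to a precise list of candidates for $M$, and then to analyze separability of the evaluation map in the only non-obvious case.

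Since $S$ is a del Pezzo surface of degree $d \geq 2$, it has no $(-2)$-curves and the cases of Lemma \ref{lemm:degree2curves} involving $d = 1$ do not apply. Thus either $M$ parametrizes the fibers of a conic fibration (yielding case (1) of the present lemma) or $d = 2$ and $M$ parametrizes curves in $|-K_{S}|$. The conic fibration case is automatically separable because the conic fibration is a morphism $S \to B$ to a smooth curve.

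Assume from now on that $d = 2$ and the members of $M$ all lie in $|-K_{S}|$. Because $S$ is a del Pezzo (not merely weak) surface, the anticanonical map $\pi := \Phi_{|-K_{S}|} : S \to \mathbb{P}^{2}$ is a finite morphism of degree $2$ and a general element of $|-K_{S}|$ is a smooth genus-$1$ curve. Thus the rational elements of $|-K_{S}|$ form a proper one-dimensional subfamily, so $\dim M = 1$ and the classifying map $M \to |-K_{S}|$ identifies $M$ birationally with a curve in the dual plane $\mathbb{P}^{2,\vee}$.

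First suppose $\ch(k) \neq 2$. Then $\pi$ is a separable double cover branched over a smooth plane quartic $Q$. For a line $L \subset \mathbb{P}^{2}$, the curve $C_{L} := \pi^{-1}(L)$ is rational iff $L$ meets $Q$ non-transversely, so $M$ is birational to the dual curve $Q^{*}$ and is parametrized by the Gauss map $\gamma : Q \to Q^{*}$. Analyzing the Kodaira--Spencer map of the family $M$ at a general member reduces separability of $M$ to separability of $\gamma$. The classical theory of Gauss maps for smooth plane curves shows that $\gamma$ is always separable when $\ch(k) \geq 5$, and that in characteristic $3$ the unique smooth plane quartic with inseparable Gauss map (up to a linear change of coordinates) is the Klein quartic $zx^{3} + xy^{3} + yz^{3} = 0$. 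This gives exception (2)(b).

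Now suppose $\ch(k) = 2$. The cover $\pi$ is described in $\mathbb{P}(1,1,1,2)$ by an Artin--Schreier equation $w^{2} + w g_{2} + g_{4} = 0$ with $g_{2}, g_{4}$ homogeneous forms in $x,y,z$ of degrees $2$ and $4$. Parametrizing a general line $L \subset \mathbb{P}^{2}$ linearly by $(s:t) \in \mathbb{P}^{1}$ and restricting the equation, the singular locus of $C_{L}$ lies over the zeros of $g_{2}|_{L}$, and rationality of $C_L$ becomes an explicit condition combining $g_{2}|_{L}$ with the derivatives of $g_{4}$ at those zeros. When $g_{2}$ is reduced, this condition cuts out only a zero-dimensional subset of $|-K_{S}|$. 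When $g_{2} = \ell^{2}$ is the square of a linear form, however, the computation shows that the one-parameter family of rational $C_{L}$ consists precisely of the preimages of lines $L$ passing through points $p \in V(\ell)$, where the slope of $L$ at $p$ is forced by the derivatives of $g_{4}$ at $p$. Because that forced slope depends on $p$ through squaring (i.e.~Frobenius), the resulting parametrization of $M$ by $V(\ell) \cong \mathbb{P}^{1}$ is inseparable, yielding exception (2)(c).

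The main obstacle will be the explicit computation in characteristic $2$: one must identify exactly which data on $g_{2}$ and $g_{4}$ produce a one-dimensional family of rational members of $|-K_{S}|$, and verify the inseparable parametrization described in (2)(c). A secondary difficulty is the characteristic-$3$ verification that the Klein quartic is, up to projective equivalence, the unique smooth plane quartic with inseparable Gauss map.
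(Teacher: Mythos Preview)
Your reduction via Lemma \ref{lemm:degree2curves} and your treatment of the case $\ch(k)\neq 2$ are essentially the same as the paper's: identifying $M$ with the dual curve of the branch quartic and reducing separability of the family to separability of the Gauss map is exactly what the paper does (phrased there in terms of the normal sheaf $\mathcal{O}(-1)\oplus k(p)$ for cuspidal curves and the Hefez/Pardini results on non-reflexive plane quartics).

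However, your characteristic~$2$ analysis has a genuine gap. You claim that when $g_{2}$ is reduced, the rationality condition on $C_{L}$ cuts out only a zero-dimensional subset of $|-K_{S}|$. This is false: since a general member of $|-K_{S}|$ is a smooth genus~$1$ curve, the singular (i.e.\ rational) members always form a one-dimensional family, regardless of whether $g_{2}$ is reduced. The question for this lemma is not whether there is a one-dimensional family of rational $C_{L}$ (there always is), but whether the \emph{general} rational $C_{L}$ is nodal or cuspidal, since only the cuspidal case produces torsion in the normal sheaf large enough to make $H^{0}(N_{f})$ too small and the family inseparable.

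The paper's argument proceeds along exactly this line: one first shows that a cusp on $C_{L}$ can only lie over a point where $L$ is tangent to the conic $\{g_{2}=0\}$, and then carries out a case-by-case analysis according to whether $g_{2}$ defines a smooth conic, a pair of distinct lines, or a double line. In the first two cases an explicit computation shows that requiring \emph{every} tangent line to yield a cuspidal preimage forces $S$ to be singular, ruling these out. Only in the double-line case $g_{2}=\ell^{2}$ does one find a genuine one-parameter family of cuspidal curves on a smooth $S$, giving exception (2)(c). Your proposal misses this nodal/cuspidal dichotomy and the resulting case analysis, and so does not establish separability when $g_{2}$ is a smooth conic or a pair of lines.
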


\begin{proof}
Since $M$ parametrizes a dominant family, a general member $f: \mathbb P^1 \to C\subset S$ parameterized by $M$ cannot be a multiple cover of a line. Thus $f : \mathbb P^1 \to C$ is birational.

As in Lemma \ref{lemm:degree2curves} the Hodge Index Theorem implies that $C^{2} = 0,2,4$.  Furthermore, as explained in Lemma \ref{lemm:degree2curves} in the $C^{2}=0$ case the curves are fibers of a conic fibration. Since a general fiber is smooth, the normal sheaf is locally free which implies that such a rational curve is free.  The $C^{2} = 4$ case can only occur when $d=1$ and thus is not a concern for us.

The only remaining case to consider is when $C^{2} = 2$ and $d=2$.  In this case the curve $C$ lies in $|-K_{S}|$; the arithmetic genus of $C$ is $1$ and $\deg(N_{f/S}) = 0$.  We will argue separately the cases when $\ch(k)>2$ and when $\ch(k)=2$.

\textbf{Case 1:} First suppose that the ground field $k$ has characteristic $p \geq 3$.  The anticanonical linear series defines a double cover $f: S \to \mathbb{P}^{2}$ that is ramified along a smooth quartic curve $B$. The $f$-images of the curves parametrized by $M$ will be the lines in $\mathbb{P}^{2}$ which are tangent to $B$ and thus $M$ will be the dual curve to $B$. This family of lines will define a non-separable cover if and only if the general curve has normal bundle $\mathcal{O}(-1) \oplus k(p)$, and hence is a cuspidal curve. This occurs precisely when the line is flex to the branch curve $B$. By \cite[(4.5)]{Hefez89}, this is equivalent to the curve being non-reflexive (which means that the Gauss map to the curve $B$ is purely inseparable).

\cite{Pardini86} analyzes the smooth plane curves $B$ in characteristic $p \geq 3$ which are non-reflexive.  \cite[Corollary 2.2]{Pardini86} shows that if $B$ is smooth and non-reflexive of degree $4$ then $\ch(k) = 3$.  \cite[Proposition 3.7]{Pardini86} shows that every smooth reflexive curve of degree $4$ is projectively equivalent to $zx^{3} + xy^{3} + yz^{3}$.

\textbf{Case 2:} Next suppose that the ground field $k$ has characteristic $2$.  In this case it is still true that the anticanonical linear series defines a separable double cover $f: S \to \mathbb{P}^{2}$ whose branch divisor $B$ is a double conic.  (However, it is possible for this conic to be singular.)  The equation for $S$ has the form $w^{2} + wg_{2} + g_{4}$ where $g_{2},g_{4}$ are homogeneous functions of degree $2,4$ respectively and $B$ is defined by $g_{2}^2$.

Suppose we take a line $\ell \subset \mathbb{P}^{2}$ not contained in $B$ and set $C = f^{-1}\ell$.  We wish to know when $C$ is cuspidal.
Since the map $C \to {\ell}$ is generically \'etale, $C$ can only be singular along $\ell \cap B$.  We claim that $C$ can only have a cusp when it is tangent to the conic defined by $g_{2} = 0$.  Indeed, suppose that $p \in B \cap \ell$ and change coordinates via $w \mapsto w-\alpha$ so that $w$ vanishes at $p$.  (Note that this coordinate transformation might change $g_4$ but will not change $g_2$.)  Let $x$ be a local coordinate for $p$ on $\ell$.  If $g_2|_{\ell}$ is not $x^2$, then locally $g_2$ can be taken to be proportional to $x$, but then the curve $w^2+wx+g_4$ will have at worst a nodal singularity.  Thus if the preimage of $\ell$ is cuspidal then $\ell$ must be tangent to the conic defined by $g_{2} = 0$ along the image of the cusp.

We now separate into three further subcases:

\textbf{Case 2a:} 
$g_{2}$ defines a smooth conic.  After a coordinate change we may suppose that the equation for $S$ has the form $w^{2} + w(yz - x^{2}) + g_{4}$.  We will write $g_{4} = \sum_{i+j+k=4} a_{ijk} x^{i}y^{j}z^{k}$.

The family of tangent lines to $yz - x^{2}=0$ have equations of the form $b_{1}y+b_{2}z=0$.  Since we are interested in what happens to a general line in this $1$-parameter family, we may for simplicity assume that the equation of the line is $z=by$.  Then the restriction of the equation defining $S$ to the line $\ell$ can be written as
\begin{align*}
w^{2} & + w(by^{2} - x^{2}) + a_{400} x^{4} + (a_{310} + a_{301}b) x^{3}y + (a_{220} + a_{211}b + a_{202}b^{2}) x^{2}y^{2} \\
& + (a_{130} + a_{121}b + a_{112}b^{2} + a_{103}b^{3})xy^{3} + (a_{040}+a_{031}b + a_{022}b^{2} + a_{013} b^{3} + a_{004}b^{4})y^{4}
\end{align*}
The tangency point $p$ has coordinates $(\sqrt{b}:1)$ on the line and over this point
\begin{align*}
w^{2} = & \, a_{004} b^{4} + a_{103} b^{3} \sqrt{b} + (a_{202} + a_{013}) b^{3} +  (a_{301}+a_{112}) b^{2}\sqrt{b} + (a_{400}+a_{211} + a_{022}) b^{2} \\ & +  (a_{310} + a_{121}) b \sqrt{b} + (a_{220} + a_{031}) b + a_{130}\sqrt{b} + a_{040}.
\end{align*}
Rewriting the equation around the point $(\sqrt{b}:1)$ and substituting $w' = w - \alpha$ where $\alpha$ is the square root of the righthand side of the previous equation, we obtain
\begin{align*}
w'^{2}  & + w'(x-\sqrt{b})^{2}  + a_{400} (x-\sqrt{b})^{4} + (a_{310} + a_{301}b) (x-\sqrt{b})^{3} \\
& + (a_{220} + a_{310}\sqrt{b}  + a_{211}b + a_{301}b\sqrt{b}  + a_{202}b^{2}) (x-\sqrt{b})^{2} + \alpha(x-\sqrt{b})^{2} \\
& + (a_{130} + (a_{121}+a_{310}) b+ (a_{112}+a_{301})b^{2} + a_{103}b^{3})(x-\sqrt{b}) %\\
\end{align*}
For this equation to define a cusp, the coefficients of $(x-\sqrt{b})$ must vanish.  Furthermore, we can only have a purely inseparable family when each of these lines yields a cusp regardless of the value of $b$.  This forces the coefficients to be identically zero:
\begin{align*}
a_{130} = a_{103}  = 0  \qquad a_{121} = a_{310}  \qquad a_{112} = a_{301}
\end{align*}
However, any surface $S$ defined by an equation whose coefficients satisfy these conditions will be singular.  Indeed, consider the chart where $z=1$.  On this chart we have
\begin{align*}
d/dw & = y-x^{2} \\
d/dx & = a_{310}y(x^{2}-y) + a_{301}(x^{2}-y) \\
d/dy & = w + a_{310}x^{3} + a_{211}x^{2} + a_{031}y^{2} + a_{112}x + a_{013}
\end{align*}
and we are looking for points on $S$ where all three equations simultaneously vanish.  Note that the vanishing of $d/dw$ implies the vanishing of $d/dx$.  Thus $S$ will always have a singular point; indeed, isolating $y$ in the equation for $d/dw$ and $w$ in the equation for $d/dy$ and substituting into the equation for $S$ we obtain a polynomial in $x$ whose roots will correspond to singular points of $S$.

\textbf{Case 2b:} $g_{2}$ defines a reducible conic.  After a coordinate change we may suppose that the equation for $S$ has the form $w^{2} + w(yz) + g_{4}$.  We will write $g_{4} = \sum_{i+j+k=4} a_{ijk} x^{i}y^{j}z^{k}$.

The family of tangent lines to $yz=0$ have equations of the form $by+cz=0$.  Since we are interested in what happens to a general line in this $1$-parameter family, we may for simplicity assume that the equation of the line is $by = z$.  Then the restriction of the equation defining $S$ to the line $\ell$ can be written as
\begin{align*}
w^{2} & + w(by^{2}) + a_{400} x^{4} + (a_{310} + a_{301}b) x^{3}y + (a_{220} + a_{211}b + a_{202}b^{2}) x^{2}y^{2} \\
& + (a_{130} + a_{121}b + a_{112}b^{2} + a_{103}b^{3})xy^{3} + (a_{040}+a_{031}b + a_{022}b^{2} + a_{013} b^{3} + a_{004}b^{4})y^{4}
\end{align*}
We are interested in the behavior over the point $(1:0)$ so that $w = \sqrt{a_{400}}$.  Rewriting the equation so that it is centered around this point, we see that every line will yield a cusp precisely when $a_{310} = a_{301} = 0$.  However, these conditions force $S$ to be singular over the point $(1:0:0)$.

\textbf{Case 2c:}   $g_{2}$ defines a non-reduced conic.  After a coordinate change we may suppose that the equation for $S$ has the form $w^{2} + wy^{2} + g_{4}$.  We will write $g_{4} = \sum_{i+j+k=4} a_{ijk} x^{i}y^{j}z^{k}$.

Note that every line is tangent to the curve $y^{2}=0$.  We would like to understand the situation when there is a $1$-parameter family of lines whose preimages are cuspidal.  Since the equation of $S$ is symmetric in $x$ and $z$, without loss of generality we may assume that the general line in our family has an equation of the form $z = bx + cy$.  Then the restriction of the equation defining $S$ to the line $\ell$ can be written as
\begin{align*}
w^{2} & + wy^{2} + (a_{400}+a_{301}b + a_{202}b^{2} + a_{103}b^{3} + a_{004}b^{4}) x^{4} \\
& + (a_{310} + a_{301}c + a_{211}b + a_{112}b^{2} + a_{013}b^{3} + a_{103}b^{2}c ) x^{3}y \\
& + (a_{220} + a_{211}c + a_{121}b + a_{202}c^{2} + a_{022}b^{2} + a_{103}bc^{2} + a_{013}b^{2}c) x^{2}y^{2} \\
& + (a_{130} + a_{121}c + a_{031}b + a_{112}c^{2} + a_{103}c^{3} + a_{013}bc^{2})xy^{3} \\
& + (a_{040}+a_{031}c + a_{022}c^{2} + a_{013}c^{3} + a_{004}c^{4})y^{4}
\end{align*}
Arguing as in the other cases, this equation will define a cusp precisely when the coefficient of $x^{3}y$ vanishes.  Furthermore, we want to ensure that there is a $1$-dimensional family of lines for which these coefficients vanish.  This gives the condition
\begin{equation*}
c(a_{301} + a_{103}b^{2}) = a_{310} + a_{211}b + a_{112}b^{2} + a_{013}b^{3}
\end{equation*}
for our $1$-dimensional family.  Note that the intersection of the line $z = bx + cy$ with the line $y=0$ is given by the point $p = (1:0:b)$.  Then it is easy to check that
\begin{equation*}
c = \frac{dg_{4}/dy(p)}{dg_{4}/dx(p)}.
\end{equation*}
\end{proof}

Next we turn to rational curves of anticanonical degree $3$.

\begin{lemm} \label{lemm:wdphighchar}
Let $S$ be a del Pezzo surface of degree $d \geq 2$.  Suppose that $M$ is a component of $\overline{M}_{0,0}(S)$ that parametrizes a dominant family of rational curves $C$ with $-K_{S} \cdot C = 3$.  Then either:
\begin{enumerate}
\item the component $M$ defines a separable family of stable maps which are generically birational maps to smooth free curves.
\item $d = 3$, $\dim(M) = 2$, and either
\begin{enumerate}
\item $M$ parametrizes a separable family of rational curves in $|-K_{S}|$, or
\item $\ch(k)=2$, $S$ is the Fermat cubic surface in $\mathbb{P}^{3}$, and $M$ is the dual variety parametrizing hyperplanes tangent to $S$. In this case, the family is inseparable.
\end{enumerate}
\item $d=2$, $\dim(M)=2$ and either
\begin{enumerate}
\item $M$ parametrizes a separable family of rational curves, or
\item $\ch(k)=2$, $S$ is the blow-up of the Fermat cubic surface $S'$ in $\mathbb{P}^{3}$, and $M$ is birational to the dual variety of $S'$ and parametrizes the pullbacks of rational members of $|-K_{S'}|$.  In this case, the family is inseparable.
\end{enumerate}
\end{enumerate}
\end{lemm}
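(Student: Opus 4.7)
The plan is to bound $C^2$ using the Hodge Index Theorem and the adjunction formula, and then to analyze each possible value of $C^2$ separately.  Since $C$ is rational (geometric genus $0$) and $-K_S \cdot C = 3$, adjunction gives $p_a(C) = \tfrac{1}{2}(C^2 - 1)$, so $C^2$ is odd and $C^2 \geq 1$.  The Hodge Index Theorem yields $d \cdot C^2 \leq 9$, leaving the possibilities $C^2 = 1$ for every $d \geq 2$, together with $C^2 = 3$ when $d \in \{2,3\}$.

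When $C^2 = 1$ the curve $C$ is smooth rational and its normal bundle is $N_{C/S} \cong \mathcal{O}_{\mathbb{P}^1}(1)$, which is globally generated with vanishing $H^1$.  Hence $C$ is free, $M$ has expected dimension $2$, and the family is separable, yielding case (1).

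For $C^2 = 3$ with $d = 3$, equality in Hodge Index forces $C \equiv -K_S$, so in the anticanonical embedding $S \hookrightarrow \mathbb{P}^3$ the curves $C$ are the singular (arithmetic genus one) hyperplane sections, parametrized by the dual variety $S^\vee$.  The family $M$ is separable if and only if the Gauss map $\gamma : S \to S^\vee$ is separable.  I would then invoke Pardini's classification of non-reflexive smooth cubic surfaces in $\mathbb{P}^3$ from \cite{Pardini86} (or verify directly that the Fermat Gauss map $(x_i) \mapsto (x_i^2)$ in characteristic $2$ is the Frobenius, hence purely inseparable, while for every other smooth cubic the Gauss map is separable) to conclude that the unique non-reflexive example is the Fermat cubic in characteristic $2$.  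This produces the dichotomy (2a) versus (2b).

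For $C^2 = 3$ with $d = 2$, the strategy is to show that $C$ is the pullback of a hyperplane section from a smooth cubic surface.  Compute $(C+K_S)^2 = -1$, $(C+K_S) \cdot K_S = -1$, and $(C+K_S) \cdot C = 0$.  Serre duality gives $h^2(K_S+C) = h^0(-C) = 0$ (the latter because $-K_S \cdot (-C) < 0$), so Riemann--Roch yields $h^0(K_S+C) \geq 1$.  Thus $K_S + C$ is effective; since $S$ is a smooth del Pezzo with no $(-2)$-curves and $-K_S$ is ample, the intersection numbers force $K_S + C = E$ for a single $(-1)$-curve $E$.  Contracting $E$ via $\pi : S \to S'$ exhibits $S'$ as a smooth cubic surface with $C = \pi^*(-K_{S'})$, so $M$ is the pullback of a family on $S'$ of the type treated in the previous paragraph.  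Applying the $d = 3$ analysis then yields (3a) and (3b).  The main obstacle is the classification of non-reflexive smooth cubic surfaces underlying Case (2); establishing the uniqueness (up to projective equivalence) of the Fermat cubic in characteristic $2$ is the geometric input that drives the inseparability dichotomy throughout Cases (2) and (3).
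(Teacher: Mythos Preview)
Your overall structure matches the paper's proof closely: the Hodge Index/adjunction bound on $C^2$, the $C^2=1$ case via the normal bundle, and the $d=2$ reduction via the effective $(-1)$-curve $K_S+C$ are all essentially identical to the paper.

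There is one genuine gap, in the $d=3$ classification step. Your criterion ``the family is separable if and only if the Gauss map is separable'' is correct (via the normal-sheaf analysis: in characteristic $\neq 2$ a cuspidal cubic still has globally generated normal sheaf, while in characteristic $2$ the torsion has length $2$, so non-freeness $\Leftrightarrow$ general tangent section cuspidal $\Leftrightarrow$ non-reflexive $\Leftrightarrow$ Gauss map inseparable). But the reference you invoke does not do what you need: \cite{Pardini86} classifies non-reflexive smooth \emph{plane curves}, not cubic surfaces. And the parenthetical ``verify directly \ldots for every other smooth cubic the Gauss map is separable'' is exactly the substantive step you have not carried out.

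The paper fills this gap by an explicit computation rather than a black-box citation. Writing $f = \sum y_{ijkl}x_0^i x_1^j x_2^k x_3^l$ and restricting to a hyperplane $\sum z_i x_i = 0$, one computes the Weierstrass invariant $c_4$ of the resulting cubic curve and finds in characteristic $2$ that
\[
c_4 = (y_{0111}z_0 + y_{1011}z_1 + y_{1101}z_2 + y_{1110}z_3)^4.
\]
If every tangent hyperplane section is cuspidal then either $S^\vee$ contains the hyperplane $\{c_4=0\}$ or $c_4 \equiv 0$. The first is ruled out by the classification of strange hypersurfaces in \cite{KP91}, forcing $y_{0111}=y_{1011}=y_{1101}=y_{1110}=0$. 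The paper then concludes projective equivalence to the Fermat cubic by a dimension count: this locus is $15$-dimensional and $\mathrm{PGL}_4$-invariant, while every smooth cubic has $15$-dimensional orbit (finite automorphism group). You should either supply this argument or find a reference that actually treats surfaces.
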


\begin{proof}
Since $M$ parametrizes a dominant family, a general member $f: \mathbb P^1 \to C\subset S$ parameterized by $M$ cannot be a multiple cover of a line. Thus $f : \mathbb P^1 \to C$ is birational.

The Hodge Index Theorem tells us that $dC^{2} - 9 \leq 0$.  The arithmetic genus formula tells us that $C^{2}-3 = 2g-2$, so that $C^{2}$ is odd and is at least $1$.  Since by assumption $d \geq 2$, we also have that $C^{2} \leq 3$.  We deduce that the only options are:

\textbf{Case 1:} $C^{2} = 1$, $d$ arbitrary: in this case the arithmetic genus of $C$ is $0$, so $C$ is free.  In fact, $C$ must be the strict transform of a general line under a birational map to $\mathbb{P}^{2}$.  Such families are separable and have the expected dimension.

\textbf{Case 2:} $C^{2} = 3$, $d=2,3$: In this case the arithmetic genus of $C$ is $1$ and $\deg(N_{f/S}) = 1$.  Since $C$ is a deformation of an elliptic curve, this implies that either $N_{f/S} = \mathcal{O}(1)$, $N_{f/S} = \mathcal{O} \oplus k(p)$, or (in characteristic $2$ only) $N_{f/S} = \mathcal{O}(-1) \oplus k[t]/(t^2)$ where the torsion part is supported on $p$.  If $k$ has characteristic $> 2$ then the normal sheaf of a general stable map is globally generated and thus $M$ is a separable family.

We also need to address separability when $\ch(k)=2$,  $d=2$ or $3$, and $S$ is a del Pezzo surface.  First suppose that $d=3$.  In this case $S$ is a smooth cubic hypersurface defined by an equation $f = \sum y_{ijkl} x_{0}^{i}x_{1}^{j}x_{2}^{k}x_{3}^{l}$.  By Section~\ref{subsec:node_cusp} the family of rational curves obtained by singular hyperplane sections of $S$ will be inseparable if and only if the general such curve is cuspidal.

Suppose $P$ is the plane defined by the equation $\sum_{i=0}^{3} z_{i}x_{i} = 0$.  Using standard facts about elliptic curves (see for example \cite{Tate74}), we see that intersection $P \cap S$ will be singular if and only if the discriminant $\Delta$ of $f|_{P}$ vanishes and in this case it will be cuspidal if and only if $c_{4}$ vanishes.  A computation shows that
\begin{equation*}
c_{4} = y_{0111}^{4}z_{0}^{4} + y_{1011}^{4}z_{1}^{4} + y_{1101}^{4}z_{2}^{4} + y_{1110}^{4}z_{3}^{4} = ( y_{0111}z_{0} + y_{1011}z_{1} + y_{1101}z_{2} + y_{1110}z_{3} )^{4}.
\end{equation*}
If every singular hyperplane section of $S$ is cuspidal, then the dual variety must contain the plane defined by $c_{4}=0$ or $c_{4}$ must be identically zero.  However, the first option is impossible due to the classification of ``strange hypersurfaces'' in \cite[Theorem 7]{KP91}.  Thus the cubic surfaces $S$ for which every singular hyperplane section is cuspidal are exactly those which satisfy
\begin{equation*}
y_{0111} = y_{1011} = y_{1101} = y_{1110} = 0.
\end{equation*}
We then claim that every smooth cubic satisfying this condition is projectively equivalent to the Fermat cubic surface.  Note that the locus of cubic surfaces satisfying this condition is invariant under the action of 
$PGL_{4}(k)$. For any cubic surface $S$, $\Aut(S)$ injects into the group of automorphisms of configurations of $(-1)$-curves on $S$, so $\Aut(S)$ is a finite group. (See \cite{DD19} for this claim.) Therefore the orbit of $S$ under $PGL_{4}(k)$ will be $15$-dimensional. Since the projective space of cubic surfaces satisfying the above condition is also $15$-dimensional, the claim follows.  

Next suppose that $d=2$.  We claim that in this case $C$ is the pullback of a curve under a birational map to a degree $3$ del Pezzo surface.  To see this, it suffices to find a $(-1)$-curve which has vanishing intersection with $C$.  We claim that $K_{S}+C$ will be linearly equivalent to such a curve.  Indeed, we have
\begin{equation*}
(K_{S} + C) \cdot C = 0 \qquad (K_{S}+C) \cdot K_{S} = -1 \qquad  (K_{S} + C)^{2} = -1
\end{equation*}
Since $H^{2}(S,K_{S}+C)$ vanishes by Serre duality, Riemann-Roch shows that $H^{0}(S,K_{S}+C)$ is non-zero, finishing the argument.  We conclude that the family of rational curves containing $C$ is separable unless $S$ is the blow-up of the Fermat cubic surface.
\end{proof}

\begin{proof}[Proof of Theorem \ref{theo:separablefamlowdegree}:]
Combining Lemma \ref{lemm:degree2curvessep} and Lemma \ref{lemm:wdphighchar}, we only need to show that if $S$ is a degree $2$ del Pezzo surface in characteristic $2$ obtained by blowing-up the Fermat cubic surface then the ramification locus of the map $S \to \mathbb{P}^{2}$ defined by $|-K_{S}|$ is a double line.  Note that every smooth hyperplane section of the Fermat cubic surface is a supersingular elliptic curve (see \cite[Theorem 1.1]{Homma97}).  Thus every smooth curve in $|-K_{S}|$ also has the same property.  Then \cite[Theorem 0.3]{Saito17} proves the desired property of the anticanonical linear series of $S$.
\end{proof}

We next show that there are no issues with separability when the characteristic is sufficiently large. Recall from (\ref{delta}) that the function $\delta(d)$ is defined by:
\begin{equation*}
\delta(d) = 
\begin{cases}
2 & \text{ if $d \geq 4$}\\
3 & \text{ if $d = 2, 3$}\\
11 & \text{ if $d = 1$}.
\end{cases}
\end{equation*}

\begin{theo} \label{theo:separabilitylargechar}
Let $k$ be an algebraically closed field of characteristic $p$. Let $S$ be a del Pezzo surface of degree $d$ over $k$. Assume that $p \geq \delta(d)$. When $d = 2$ and $p = 3$, we further assume that $S$ is not isomorphic to the del Pezzo surface listed in Theorem~\ref{theo:separablefamlowdegree}.(2).

Then any dominant family of rational curves on $S$ of anticanonical degree $\leq 3$ contains a free rational curve. In particular, any dominant component parametrizing rational curves of anticanonical degree $\leq 3$ is separable so that it has expected dimension.
\end{theo}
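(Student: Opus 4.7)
The plan is to treat rational curves of anticanonical degree $1$, $2$, and $3$ separately, reducing as much as possible to the results of Section~\ref{sect:expdimlowdegree} and Theorem~\ref{theo:separablefamlowdegree}, and invoking the positive-characteristic deformation theory of \cite{IIL20} only for the degree $1$ case. First I would dispose of low anticanonical degree: by Lemma~\ref{lemm:(-2)-curve} a rational curve of anticanonical degree $0$ is a $(-2)$-curve and cannot be dominant, and by Lemma~\ref{lemm:lines} a rational curve of anticanonical degree $1$ is either a $(-1)$-curve (also not dominant) or comes from a quasielliptic pencil on a surface of degree $1$; the latter requires $\ch(k) \in \{2,3\}$ and is excluded by $p \geq \delta(1) = 11$.

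For anticanonical degree $2$ I would apply the classification of Lemma~\ref{lemm:degree2curves}. A dominant component either parametrizes the fibers of a conic fibration---in which case a general fiber is smooth, hence free---or, when $d=1$, parametrizes a subfamily of $|-2K_S|$. In this remaining subcase the argument of Case 3a of Lemma~\ref{lemm:degree2curves}, which is valid as soon as $\ch(k) \geq 3$, shows that the space of rational members is at most one-dimensional and that a generic member is nodal rather than cuspidal; thus its normal sheaf is locally free of degree $0$ and deforms to a free curve in the same component. When $d \geq 2$ the inseparable possibilities of Lemma~\ref{lemm:degree2curvessep}(2) are ruled out by the hypothesis: the Klein quartic case is excluded by explicit assumption when $d=2,p=3$, while the remaining cases require $\ch(k)=2$ and hence contradict $p \geq \delta(d) \geq 3$.

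For anticanonical degree $3$ and $d \geq 2$ the conclusion follows directly from Lemma~\ref{lemm:wdphighchar}: the inseparable cases listed there (the Fermat cubic in characteristic $2$, and its blow-up producing the degree-$2$ exception) both require $\ch(k)=2$ and are excluded by $p \geq \delta(d) \geq 3$. The genuinely new work is for $d = 1$, anticanonical degree $3$. The Hodge index theorem gives $C^2 \leq 9$, so $p_a(C) \leq 4$, and the plan is to combine this genus bound with the deformation theory of \cite{IIL20}, which controls the contribution of inseparable (cuspidal) singularities of a rational curve to its arithmetic genus in terms of the characteristic. The explicit bound $p \geq 11 = \delta(1)$ is tuned so that no cuspidal contribution of total weight at most $4$ is compatible with a dominant inseparable family, forcing separability. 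Once the family is separable, the normal sheaf of a general normalization $f: \mathbb{P}^1 \to S$ has degree $-K_S \cdot C - 2 = 1$; a short Bend-and-Break argument, applied in light of the already-established classification of rational curves of anticanonical degree $\leq 2$, shows that a generic such $f$ has at most nodal image, whence $N_f \cong \mathcal{O}_{\mathbb{P}^1}(1)$ is globally generated and $f$ is free.

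The principal obstacle is this last step for $d=1$: one must combine the Hodge-index bound on $p_a$ with the quantitative cusp-contribution estimates from \cite{IIL20} to exclude inseparable dominant families when $p \geq 11$, and then verify that a general member of any separable dominant family is at worst nodal, in order to guarantee that the normal sheaf is locally free and positive.
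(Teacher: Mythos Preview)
Your treatment of the cases $d \geq 2$ is essentially correct and matches the paper's opening observations: the normal-sheaf count together with the explicit exceptions in Lemmas~\ref{lemm:degree2curvessep} and~\ref{lemm:wdphighchar} disposes of everything once $p \geq 3$, and the explicit exclusion handles $d=2$, $p=3$.

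For $d=1$, however, there are genuine gaps. First, your degree-$2$ case analysis is incomplete: Lemma~\ref{lemm:degree2curves} also allows case~(3), curves in $|{-\phi^*K_{\widetilde{S}}}|$ pulled back from a degree-$2$ del Pezzo $\widetilde{S}$; this is easily patched via Lemma~\ref{lemm:degree2curvessep} since $p \geq 11$ excludes the Klein exception. More seriously, your claim that Case~3a of Lemma~\ref{lemm:degree2curves} shows ``a generic member is nodal rather than cuspidal'' is not what that argument proves. Case~3a only rules out a \emph{two}-dimensional family of cuspidal members of $|{-2K_S}|$ via the strange-curve contradiction; it says nothing about whether the generic member of the resulting one-dimensional family is nodal. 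A one-parameter family of curves in $|{-2K_S}|$ each carrying a cusp would have normal sheaf $\mathcal{O}(-1)\oplus k(p)$ or worse, which is not globally generated, so freeness does not follow. Likewise, for anticanonical degree $3$ your sketch ``separability via \cite{IIL20}, then nodality via Bend-and-Break'' does not identify a working mechanism: separability of the family does not by itself bound the singularities of image curves, and Bend-and-Break is not a tool that produces nodality statements.

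The paper's route for $d=1$ (uniformly for all anticanonical degrees $\leq 3$) is different and bypasses any node/cusp analysis on $S$. Given a dominant component $M$, pick a one-parameter subfamily $\mathcal{C}\to T$; \cite[Lemma~6.1]{IIL20} factors the evaluation $\mathcal{C}\to S$ through a \emph{separable} map $ev':\mathcal{C}'\to S$ over a new base $T'$. One then shows the general fiber of $s':\mathcal{C}'\to T'$ is \emph{smooth}: by \cite[Theorem~5.7 and Proposition~5.2]{IIL20} this holds once the arithmetic genus of that fiber is strictly less than $(p-1)/2$, and the Hodge-index bound $dC^{2}\leq(-K_S\cdot C)^{2}$ gives $p_a<5\leq(11-1)/2$. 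Separability of $ev'$ together with smoothness of the general fiber then forces $ev'^{*}T_S$, and hence $f^{*}T_S$ for a general $f$ in $M$, to be globally generated. Freeness follows directly, with no need to decide whether the image curves in $S$ are nodal.
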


\begin{proof}
Since the surfaces under consideration do not admit a dominant family of $-K_{S}$-lines, we see that a general map in a dominant family of maps of degree $\leq 3$ must be birational onto its image. 

When $d\geq 3$, every $-K_S$-conic is smooth and every $-K_S$-cubic has at most $1$ cusp. Thus it follows from a normal bundle calculation that such curves are free.  When $d = 2$ and $p = 3$, $-K_S$-conics are handled by Lemma~\ref{lemm:degree2curvessep} and every $-K_S$-cubic has at most one cusp and thus is free. So we only need to consider when $d = 1$ and $p \geq 11$ or $d = 2$ and $p \geq 5$. Let us discuss the case of $d = 1$ as the other case is similar.

Let $M$ be any component of $\overline{M}_{0,0}(S)$ which parametrizes a dominant family of stable maps of anticanonical degree $\leq 3$ such that the general map has irreducible domain and is birational onto its image in $S$.  Let $T$ be the normalization of a curve in $M$ which parametrizes a dominant family of irreducible curves.  It suffices to show that the restriction of the tangent bundle of $S$ to a general curve parametrized by $T$ is globally generated.

Let $\mathcal{C}$ denote the normalization of the one-pointed family over $T$ equipped with the evaluation map $ev: \mathcal{C} \to S$.  By \cite[Lemma 6.1]{IIL20} we have a diagram
\begin{equation*}
\xymatrix{ \mathcal{C} \ar[d]_{s} \ar[r]^{g} &  \mathcal{C}' \ar[d]_{s'} \ar[r]^{ev'} & S  \\
T \ar[r]^{h} & T' & }
\end{equation*}
that satisfies the following properties:
\begin{enumerate}
\item $\mathcal{C}'$ and $T'$ are normal.
\item $k(T')$ is algebraically closed in $k(\mathcal{C}')$.
\item $s'$ is a proper flat morphism such that the reduced subscheme underlying the fiber over a general closed point is a (possibly singular) irreducible rational curve.
\item $g$ and $h$ are finite morphisms.
\item $ev = ev' \circ g$ and $ev'$ is a separable map.
\end{enumerate}
We claim that every fiber of $s'$ over a general closed point of $T'$ is smooth.  First, \cite[Lemma 7.2]{Badescu01} shows that condition (2) above implies that $k(\mathcal{C}')$ is a separable extension of $k(T')$.  
In particular this implies that a general fiber $C'$ of $s'$ is reduced.  Next, by \cite[Proposition 5.2]{IIL20} the sum of the $\delta$-invariants at the closed points of a general fiber $C'$ is the same as the arithmetic genus.  Thus by \cite[Theorem 5.7]{IIL20} it suffices to show that the arithmetic genus of $C'$ is strictly less than $(p-1)/2$.  Since the map $ev$ takes a general fiber of $s$ birationally onto its image, the same is true of $ev'$.  This implies that the arithmetic genus of a fiber of $s'$ is at most the arithmetic genus of its image in $S$.  By the Hodge Index Theorem, a curve of anticanonical degree $\leq 3$ on a weak del Pezzo surface satisfies $dC^{2} \leq (-K_{S} \cdot C)^{2}$ and thus has arithmetic genus $< \frac{11-1}{2} = 5$.  Since by assumption $p \geq 11$ we conclude that the general fiber of $s'$ is smooth.

Since $ev'$ is a dominant separable morphism and the general fiber of $s'$ is smooth, we deduce that the restriction of the tangent bundle of $S$ to a general fiber of $s'$ is globally generated.  Since $g$ takes a general fiber of $s$ birationally onto the corresponding fiber of $s'$, the same property holds for the general fiber of $s$.
\end{proof}

As a corollary, we have the following statement:
\begin{coro} \label{coro:separabilitylargechar}
Let $k$ be an algebraically closed field of characteristic $p$. Let $S$ be a del Pezzo surface of degree $d$ over $k$. Assume that $p \geq \delta(d)$. When $d = 2$ and $p = 3$, we further assume that $S$ is not isomorphic to the surface listed in Theorem~\ref{theo:separablefamlowdegree}.(2). Then any rational curve of anticanonical degree $\leq 3$ containing a general point is a free rational curve.
\end{coro}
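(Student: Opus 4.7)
The plan is to deduce the corollary from Theorem~\ref{theo:separabilitylargechar} by showing that in each dominant component of $\overline{\mathrm{Rat}}(S)$ parametrizing birational maps from $\mathbb{P}^{1}$ of anticanonical degree $\leq 3$, the non-free locus projects to a proper closed subset of $S$. Since the N\'eron--Severi group of $S$ is finitely generated and each moduli space $\overline{M}_{0,0}(S,\beta)$ is projective, only finitely many components $M_{1},\dots,M_{N}$ of $\overline{M}_{0,0}(S)$ parametrize rational curves of anticanonical degree $\leq 3$. A non-dominant component has image contained in a proper closed subset of $S$ and therefore cannot contribute a curve through a general point; I may thus restrict attention to the dominant components.

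Let $M$ be such a dominant component, and let $M^{\circ} \subseteq M$ denote the open locus where the domain is $\mathbb{P}^{1}$ and the map is birational onto its image. By Theorem~\ref{theo:separabilitylargechar}, $M$ is separable and of expected dimension, and its general member is a free rational curve. Let $Z \subseteq M^{\circ}$ be the locus where $h^{1}(\mathbb{P}^{1}, N_{f/S}) > 0$; by upper semicontinuity $Z$ is closed in $M^{\circ}$, and by freeness of the general member it is a proper subset. Write $\pi : \mathcal{C} \to M$ for the universal curve and $ev : \mathcal{C} \to S$ for the evaluation. The heart of the argument is to show $ev(\pi^{-1}(Z)) \neq S$. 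Assuming the contrary for contradiction, I would extract a one-parameter subfamily $T \subseteq Z$ (by cutting $Z$ with a general complete intersection in $M$) whose restricted universal curve $\mathcal{C}_{T} \to S$ is still dominant. Since $T \subseteq M^{\circ}$, its general member is still a birational map from $\mathbb{P}^{1}$ of anticanonical degree $\leq 3$, and these are precisely the hypotheses used in the proof of Theorem~\ref{theo:separabilitylargechar}: the factorization of \cite[Lemma 6.1]{IIL20}, the Hodge Index genus bound, and \cite[Theorem 5.7]{IIL20}. Running that proof with this $T$ in place of the one-parameter family used there would produce a free rational curve inside $T \subseteq Z$, contradicting the definition of $Z$.

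Hence $ev(\pi^{-1}(Z))$ is a proper closed subset of $S$ for each dominant component $M$. Intersecting the open sets $U_{M} := S \setminus ev(\pi^{-1}(Z_{M}))$ over the finitely many dominant components and removing the images of the non-dominant components yields a dense open $U \subseteq S$ such that every rational curve on $S$ of anticanonical degree $\leq 3$ passing through a point of $U$ is free. I expect the main subtlety to lie in verifying that the proof of Theorem~\ref{theo:separabilitylargechar} genuinely applies to a cut-down subfamily $T$ rather than only to a full component; this should be routine, since that proof only uses that $T$ is a dominant one-parameter family of birational maps from $\mathbb{P}^{1}$ of bounded anticanonical degree, and never uses that $T$ is a component.
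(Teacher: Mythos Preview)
Your proposal is correct and is the natural way to flesh out the argument; the paper itself gives no proof and simply records the corollary as an immediate consequence of Theorem~\ref{theo:separabilitylargechar}. Your argument is precisely the expected one: finitely many components, non-dominant ones miss a general point, and in each dominant component the non-free locus cannot sweep out $S$ because any dominant one-parameter subfamily would produce a free curve. One small simplification: you do not really need to re-run the \emph{proof} of Theorem~\ref{theo:separabilitylargechar} on your subfamily $T$. The theorem as stated applies to \emph{any} dominant family of rational curves of anticanonical degree $\leq 3$, not just to full components, so once you have extracted a dominant $T \subseteq Z$ the theorem directly yields a free curve in $T$, contradicting $T \subseteq Z$. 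Your closing worry is therefore unfounded, though your observation that the proof of the theorem only uses a one-parameter dominant family (and not that it sits inside a component) is exactly what makes the theorem statement valid at that level of generality.
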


Let us add the following lemma for a later application:

\begin{lemm}
\label{lemm: line_conic}
Let $S$ be a del Pezzo surface of degree $1$ over an algebraically closed field $k$ of characteristic $p$.
%We assume that all components of $\overline{M}_{0,0}(S)$ parametrizing $-K_S$-lines and conics have the expected dimension and all dominant components of $\overline{M}_{0,0}(S)$ parametrizing $-K_S$-conics are separable. 
Assume that $p \geq 11$. Then a general member of a dominant family of $-K_S$-conics meets with any $-K_S$-line transversally and meets with any $-K_S$-conic transversally.
\end{lemm}

\begin{proof}
First of all, note that by Theorem~\ref{theo:separabilitylargechar} all components of $\overline{M}_{0,0}(S)$ parametrizing $-K_S$-lines and conics have the expected dimension and all dominant components of $\overline{M}_{0,0}(S)$ parametrizing $-K_S$-conics are separable. 

Let $C_1$ be a $-K_S$-conic and $C_2$ be a $-K_S$-line. Then note that $C_1^2 = 0$, $2$, or $4$. We also have $C_2^2 = -1$ or $1$.  By the Hodge Index Theorem the determinant of the intersection matrix of $-K_S, C_1, C_2$ is non-negative (regardless of the rank of this matrix).  Combining this fact with the above consideration, we conclude that $C_1 \cdot C_2 < 11$.

Let $p : \mathcal C \to N$ be a component of $\overline{M}_{0,0}(S)$ parameterizing $C_1$ with the evaluation map $f : \mathcal C \to S$.  A general curve parametrized by $N$ is free and so after shrinking $N$ we may assume that the separable morphism $f$ is unramified. Indeed, after shrinking $N$ so that $N$ only parametrizes free curves, the evaluation map $f$ is \'etale by \cite[II.3.5.4 Corollary]{Kollar} and a flat descent argument. Thus we conclude that $f^{-1}(C_2)$ is reduced.  Since $C_1 \cdot C_2  < 11$, our assumption on the characteristic implies that every dominant component of $f^{-1}(C_2)$ maps separably to $N$, so the intersection of $f^{-1}(C_2)$ and a general fiber of $p$ is reduced. This implies that a general $C_1$ meets with $C_2$ transversally.

Next let $C_1, C_2$ be two $-K_S$-conics. Then one can prove that $C_1.C_2 < 11$.  Repeating the argument above, we obtain transversality for a general conic meeting a conic.
\end{proof}

The statement of Lemma \ref{lemm: line_conic} fails in characteristic $2$:

\begin{exam}
It is possible on a del Pezzo surface $S$ that there is a fixed $-K_{S}$-line which is tangent to every $-K_S$-conic in a given $1$-dimensional family. For example, let $k$ be an algebraically closed field of characteristic $2$.  Consider the curve $C \subset \mathbb P^1_x \times \mathbb P^1_y$ defined by 
\[
x_0^2y_1 = x_1^2y_0.
\]
This is an integral smooth rational curve. Consider the morphism $\pi : \mathbb P^1 \times \mathbb P^1 \to \mathbb P^1$ mapping $(x, y)\mapsto y$. Then every fiber of $\pi|_C : C \to \mathbb P^1$ is non-reduced.
We blow up $5$ general points on $C$ and obtain a smooth cubic surface $\beta : S \to \mathbb P^1 \times \mathbb P^1$. Then the strict transform of $C$ is a $(-1)$-curve and every irreducible fiber of $\pi\circ\beta$ is a $-K_{S}$-conic which is tangent to $C$.
\end{exam}

%Finally one can prove the following lemma using an argument similar to the proof of Lemma~\ref{lemm: line_conic}:

%\begin{lemm}
%\label{lemm: conic_conic}
%Let $S$ be a del Pezzo surface of degree $1$ over an algebraically closed field $k$ of characteristic $p$.
%Assume that $p \geq 11$. Then a general member of a dominant family of $-K_S$-conics meets with any $-K_S$-conic transversally.
%\end{lemm}

%\begin{proof}
%Let $C_1, C_2$ be two $-K_S$-conics. Then one can prove that $C_1.C_2 < 11$.  Arguing as in Lemma \ref{lemm: line_conic}, this proves the claim.
%\end{proof}

\section{Inductive arguments using Bend and Break} \label{sect:inductionstep}

Let $k$ be an algebraically closed field (of arbitrary characteristic).  We would like to classify all dominant families of rational curves on a weak del Pezzo surface $S$ which either have larger than the expected dimension, or (more generally) fail to be separable.  Using Bend-and-Break we will reduce the classification problem to smaller degrees, eventually working downward to the base cases discussed in Section \ref{sect:expdimlowdegree} and Section \ref{sect:lowdegree}.

The following lemma is the key tool.

\begin{lemm} \label{lemm:weakmbbfordpsurfaces}
Let $S$ be a weak del Pezzo surface over $k$.  Fix a positive integer $d \geq 4$.  Assume that every irreducible component of $\overline{M}_{0,0}(S)$ that generically parametrizes a dominant family of birational maps onto irreducible curves with anticanonical degree $< d$ has the expected dimension.

Suppose that $M \subset \overline{M}_{0,0}(S)$ is an irreducible component that generically parametrizes a dominant family of birational maps onto irreducible curves of anticanonical degree $d$.  Fix $\dim(M)-1$ general points of $S$.  Then $M$ parametrizes a stable map $f: Z \to S$ where $Z$ has two different irreducible components $Z_{1},Z_{2} \subset Z$ such that $f(Z_{1}) \cup f(Z_{2})$ contains all $\dim(M)-1$ distinguished points and $f|_{Z_{1}}$, $f|_{Z_{2}}$ are general members of dominant families of birational stable maps in lower anticanonical degree.

If furthermore $S$ is a del Pezzo surface, then we can ensure that $Z_{1},Z_{2}$ are the only components of $Z$.
\end{lemm}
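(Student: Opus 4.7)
The plan is to apply Mori-style bend-and-break to a suitable one-parameter family inside $M$ through the distinguished points, then use the inductive hypothesis together with a boundary dimension count to identify $Z_{1}$ and $Z_{2}$.

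Since $M$ generically parametrizes a dominant family of birational maps of anticanonical degree $d$, its expected dimension is $d-1$, so $\dim M \geq d-1$. After fixing the $\dim(M)-1$ general points $p_{1},\ldots,p_{\dim(M)-1}$, the sublocus $B \subset M$ of stable maps whose image contains all these points has $\dim B \geq 1$. Choose a complete irreducible curve $T \subset \overline{B}$; after a finite base change of $T$ if necessary, lift one distinguished point to a section $\sigma\colon T \to \cC$ of the universal family. Applying bend-and-break to the evaluation $\mathrm{ev}\colon \cC \to S$---which contracts $\sigma$ to a point while $T$ itself moves over a positive-dimensional region of $S$---produces a reducible geometric fiber $f\colon Z \to S$ with $Z = Z_{1} \cup \cdots \cup Z_{k}$ for some $k \geq 2$.

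Next, I would analyze the components using the inductive hypothesis. Set $\beta_{i} = f_{*}[Z_{i}]$ and $e_{i} = -K_{S}\cdot\beta_{i}$, so $\sum_{i} e_{i} = d$. By Lemma~\ref{lemm:(-2)-curve}, any component with $e_{i} = 0$ maps onto a fixed $(-2)$-curve and thus contains none of the $p_{i}$; therefore every distinguished point lies on some $f(Z_{i})$ with $e_{i} \geq 1$, and for each such $Z_{i}$ the map $f|_{Z_{i}}$ belongs to a dominant component $M_{i}$ of $\overline{M}_{0,0}(S,\beta_{i})$. Reducing any multiple cover to the class $\alpha_{i}$ of the underlying birational image, the inductive hypothesis gives the expected dimension $-K_{S}\cdot\alpha_{i} - 1$ for the underlying birational family. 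The boundary stratum of $\overline{M}_{0,0}(S,\beta)$ containing $f$ is a tree of components, so its dimension equals the sum of the dimensions of the associated Kontsevich components; imposing passage through the $\dim(M)-1$ distinguished points cuts this dimension by $\dim(M)-1$. Combining with $\sum_{i} e_{i} = d$ and $\dim M \geq d-1$ then forces at least two of the $Z_{i}$---call them $Z_{1}$ and $Z_{2}$---to lie in dominant birational families of strictly lower anticanonical degree, and their images must jointly carry all the distinguished points. A genericity argument obtained by varying the $p_{i}$'s (so that the reducible maps produced sweep out an open subset of the relevant boundary stratum) ensures that $f|_{Z_{1}}$ and $f|_{Z_{2}}$ can be taken as general members of their families.

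Finally, when $S$ is a del Pezzo surface there are no $(-2)$-curves, so every $e_{i} \geq 1$, and the same dimension inequality forces $k = 2$, yielding $Z = Z_{1} \cup Z_{2}$. The main technical obstacle is the dimension count above: one must carefully account for components that are multiple covers of lower-degree birational families, and must rule out the pathological configuration in which a single component of anticanonical degree $d$ is glued to a chain of $(-2)$-curves. This pathology---which would violate the ``lower anticanonical degree'' requirement---is excluded by choosing the distinguished points sufficiently generically, so that no such anomalous degeneration lies in the closure of~$B$.
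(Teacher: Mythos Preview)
Your proposal has a genuine gap at the critical step. After bend-and-break you obtain a reducible stable map $f\colon Z \to S$, but you have not established that at least \emph{two} components of $Z$ carry distinguished points (equivalently, lie in dominant families). Your assertion that the dimension count ``forces at least two of the $Z_{i}$ to lie in dominant birational families'' is not justified, and in fact the count alone does not exclude the configuration $Z = Z_{1} \cup Z_{2}$ in which $Z_{1}$ has anticanonical degree $d-1$ and contains all $\dim(M)-1$ distinguished points while $Z_{2}$ is a rigid $(-1)$-curve (or, on a degree~$1$ surface, a rigid member of $|-K_{S}|$). When $\dim M = d-1$, the inductive hypothesis gives the degree $d-1$ component expected dimension $d-2$, so it can indeed pass through all $d-2$ general points; the attached $(-1)$-curve is not a member of any dominant family, so the conclusion of the lemma would fail for this degeneration. (A related slip: it is not true that every $Z_{i}$ with $e_{i}\geq 1$ lies in a dominant family---$(-1)$-curves have $e_{i}=1$ but are rigid.)

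The paper closes this gap by invoking \cite[Lemma~4.1]{LT19}, which guarantees that among the degenerations of the one-parameter family one can find a stable map in which at least two domain components each contain one of the fixed general points. Since any component through a general point necessarily moves in a dominant family, this immediately gives $s\geq 2$. The remainder is then the clean count $r-1 \leq \sum_{i=1}^{s}(d_{i}-1) \leq d-s$, which combined with $r\geq d-1$ forces $s=2$, $d_{1}+d_{2}=d$, birationality of each $f|_{Z_{i}}$, and (for del Pezzo $S$) the absence of further components. The essential missing ingredient in your argument is precisely this refinement of bend-and-break; the vague boundary-stratum dimension formula you propose cannot substitute for it.
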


The proof is modeled on \cite[Lemma 1.14]{Testa09}.

\begin{proof}
Set $r = \dim(M)$.  If we fix $r-1$ general points of $S$ then by Bend-and-Break $M$ parametrizes a stable map $f$ with reducible domain whose image contains these $r-1$ points.  Furthermore, by \cite[Lemma 4.1]{LT19} (which works in arbitrary characteristic) we may find such a stable map $f$ such that there are at least two different irreducible components of the domain of $f$ such the image of each component contains one of our fixed points, and moreover the two points contained by the two components are different.  In particular, due to the generality of the points there must be at least two irreducible components of the domain of $f$ whose deformations dominate $S$.

Let $Z_{1},\ldots,Z_{s}$ be the irreducible components of the domain of $f$ whose images deform in a dominant family and let $C_{1},\ldots,C_{s}$ be their images in $S$.  The previous paragraph shows that $s \geq 2$.  By assumption every family of birational stable maps with irreducible domains of lower degree has the expected dimension.  
In particular, if we define $d_{i} := -K_{S} \cdot C_{i}$ then $C_{i}$ can contain at most $d_{i}-1$ general points of $S$.  On the other hand, we know that all $r-1$ general points must be contained in the image of $f$.  Since $r \geq d - 1 \geq (\sum d_{i}) - 1$, we see that there can be at most two such components $C_{i}$.  We conclude that $s=2$.  Furthermore, since each $C_{i}$ is going through the maximal possible number of general points in $S$, by choosing our points general we may ensure that $C_{1}$ and $C_{2}$ are general in their respective families. 
Since $d = d_{1} + d_{2}$ by the argument above, we must have that $f|_{Z_{i}}$ is birational for $i=1,2$.

Suppose furthermore that $S$ is a del Pezzo surface.  Since the argument above shows that $d = d_{1} + d_{2}$ we see that there can be no other curves in the image of $f$.
\end{proof}

We can now address the existence of families of rational curves with larger-than-expected dimension.

\begin{prop} \label{prop:expecteddim}
Let $S$ be a weak del Pezzo surface over $k$.  Assume that every dominant component of $\overline{M}_{0,0}(S)$ generically parametrizing birational maps to rational curves of anticanonical degree $\leq 2$ has the expected dimension.  Let $M \subset \overline{M}_{0,0}(S)$ be any component that generically parametrizes a dominant family of birational maps onto irreducible curves $C$.  Then $M$ has the expected dimension.
\end{prop}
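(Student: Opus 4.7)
The plan is to induct on the anticanonical degree $d := -K_S \cdot C$ where $C$ is the image of a general map parametrized by $M$. The base case $d \leq 2$ is precisely the hypothesis of the Proposition, so I would assume $d \geq 3$ and that the claim is known for every dominant component that generically parametrizes birational maps onto rational curves of anticanonical degree strictly less than $d$. Let $r := \dim M$; deformation theory for the Kontsevich moduli space provides the lower bound $r \geq d-1$ (the expected dimension), so it suffices to derive a contradiction from the assumption $r \geq d$.

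Under this assumption, I would fix $r-1 \geq d-1 \geq 2$ general points $p_1,\ldots,p_{r-1}$ on $S$. For $d \geq 4$, I would invoke Lemma \ref{lemm:weakmbbfordpsurfaces} directly: it yields a stable map $f : Z \to S$ parametrized by $M$ whose domain contains two components $Z_1, Z_2$ with $f(Z_1) \cup f(Z_2)$ containing all of $p_1,\ldots,p_{r-1}$, and such that each $f|_{Z_i}$ is a general member of a dominant family of birational maps of anticanonical degree $d_i < d$. By the inductive hypothesis these families have the expected dimensions $d_i - 1$, so a general member can pass through at most $d_i - 1$ prescribed general points. Since $d_1 + d_2 \leq d$, the total number of points covered by $f(Z_1) \cup f(Z_2)$ is at most $(d_1-1)+(d_2-1) \leq d - 2$, which is strictly less than $r-1 \geq d-1$. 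This contradicts the fact that all $r-1$ points lie in $f(Z_1)\cup f(Z_2)$.

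The case $d = 3$ is not directly covered by Lemma \ref{lemm:weakmbbfordpsurfaces}, so I would run the proof of that lemma by hand. Because $r \geq 3$, we have $r-1 \geq 2$, so classical Bend-and-Break (using that the Kontsevich moduli space, and hence $M$, is proper) applied to the positive-dimensional subfamily of $M$ through the chosen points yields a stable map with reducible domain whose image contains all $r-1$ general points. Invoking \cite[Lemma 4.1]{LT19} we arrange the degeneration so that at least two components $Z_1, Z_2$ of the source dominate under evaluation. The same counting as before, using $d_1 + d_2 \leq 3$ and the inductive hypothesis that smaller-degree dominant components have the expected dimension, shows that $f(Z_1) \cup f(Z_2)$ can contain at most $d - 2 = 1$ prescribed general point, contradicting $r-1 \geq 2$.

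The main obstacle I anticipate is verifying that the reducible degeneration produced by Bend-and-Break actually has dominant components whose images arise as \emph{general} members of lower-degree dominant families, so that the inductive hypothesis applies to bound the number of general points each component meets. For $d \geq 4$ this is packaged by Lemma \ref{lemm:weakmbbfordpsurfaces}, while for $d = 3$ it requires carefully combining Bend-and-Break with \cite[Lemma 4.1]{LT19} to guarantee at least two dominant source components and then choosing the $p_i$ generically enough that each dominant component is general in its family.
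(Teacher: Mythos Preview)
Your proposal is correct and follows essentially the same approach as the paper: induction on the anticanonical degree, with the $d=3$ case handled by rerunning the Bend-and-Break argument of Lemma~\ref{lemm:weakmbbfordpsurfaces} and the $d\geq 4$ case by invoking that lemma directly. The only cosmetic difference is that the paper, for $d\geq 4$, does not argue by contradiction but instead chains the inequalities $r-1 \leq (d_1-1)+(d_2-1) \leq d-2 \leq r-1$ to force $r=d-1$ directly; your contradiction version is equivalent.
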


\begin{proof}
We prove the statement by induction on the anticanonical degree.  The base case when $-K_{S} \cdot C \leq 2$ is true by assumption.

Suppose that $-K_{S} \cdot C = 3$.  If the deformations of $C$ had larger than the expected dimension, then by applying Bend-and-Break as in the proof of Lemma \ref{lemm:weakmbbfordpsurfaces} we see that $S$ must also carry a dominant family of rational curves of degree $\leq 2$ which has higher than the expected dimension.  This gives a contradiction.

Suppose that $-K_{S} \cdot C \geq 4$.  Set $r = \dim(M)$.  By Bend-and-Break we find a stable map parametrized by $M$ with reducible domain through $r-1$ general points of $S$.  By Lemma \ref{lemm:weakmbbfordpsurfaces} there are two curves $C_{1},C_{2}$ in the image of $f$ which deform in a dominant family and contain all $r-1$ general points.  Letting $d_{1},d_{2}$ denote the anticanonical degrees of the curves, we have
\begin{equation*}
r-1 \leq (d_{1}-1) + (d_{2}-1) \leq -K_{S} \cdot C - 2 \leq r-1
\end{equation*}
and thus $r = -K_{S} \cdot C - 1$.
\end{proof}

\begin{proof}[Proof of Theorem \ref{theo:maintheorem1}:]
Combine  Corollary \ref{coro:lowdegreeexpectdimanticanonical} and Proposition \ref{prop:expecteddim}.
\end{proof}

Next we consider whether or not families of high degree rational curves are separable.

\begin{prop}
\label{prop: free}
Let $S$ be a del Pezzo surface over $k$.  Assume that every dominant component of $\overline{M}_{0,0}(S)$ generically parametrizing birational maps to rational curves of anticanonical degree $\leq 3$ is separable. 
Let $M \subset \overline{M}_{0,0}(S)$ be any component that generically parametrizes a dominant family of birational maps onto irreducible curves $C$. Then $M$ generically parametrizes a free curve.
\end{prop}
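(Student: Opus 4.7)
The plan is to argue by induction on the anticanonical degree $e := -K_S \cdot C$, using Lemma \ref{lemm:weakmbbfordpsurfaces} to carry the step from smaller to larger $e$. The base case $e \leq 3$ is handled directly by the hypothesis: on a smooth variety, a dominant separable family generically parametrizes free curves, because separability makes the differential of the evaluation map from the universal curve generically surjective onto $T_S$, and for a rank $2$ vector bundle on $\mathbb{P}^1$ generic global generation is equivalent to global generation everywhere.

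For the inductive step $e \geq 4$, the inductive hypothesis together with Proposition \ref{prop:expecteddim} guarantees that every dominant component of anticanonical degree strictly smaller than $e$ has the expected dimension, which is precisely the hypothesis required by Lemma \ref{lemm:weakmbbfordpsurfaces}. Applying that lemma produces a stable map $f \colon Z = Z_1 \cup Z_2 \to S$ parametrized by $M$, where $Z_1$ and $Z_2$ are the only two components (using that $S$ is a del Pezzo surface, not merely weak) meeting at a single node, and each restriction $f|_{Z_i}$ is a general member of a dominant family of birational stable maps in strictly smaller anticanonical degree. By the inductive hypothesis, each $f|_{Z_i}$ is free.

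To conclude that the general member of $M$ is free, I would first verify that $[f]$ is a smooth point of $\overline{M}_{0,0}(S)$ and that $f^*T_S$ is globally generated on $Z$. Since each $(f|_{Z_i})^*T_S$ is globally generated with $H^1 = 0$, the normal sheaf $N_{f|_{Z_i}/S}$ inherits these properties, and by Theorem \ref{thm-normalBundleNodalCurve} the restriction $N_{f/S}|_{Z_i}$ has its free part twisted up by one at the node, so remains globally generated with vanishing $H^1$; Proposition \ref{prop-vanH1NodalCurve} then gives $H^1(Z, N_{f/S}) = 0$, and an analogous computation yields $H^1(Z, f^*T_S) = 0$ together with global generation of $f^*T_S$ on $Z$. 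For a general smoothing $g$ of $f$ in $M$: by cohomology and base change the sections of $f^*T_S$ deform, and global generation at a smooth point $x_0 \in Z_1$ is an open condition in the one-parameter family of stable maps, so $g^*T_S$ is globally generated at some $x \in \mathbb{P}^1$; for a rank $2$ bundle on $\mathbb{P}^1$ this forces both Grothendieck summands to have nonnegative degree, so $g$ is free.

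The main obstacle I anticipate is the final step, where global generation of $f^*T_S$ on the reducible central fiber must be specialized to global generation of the smoothing. In positive characteristic the weaker fact that $h^1(g^*T_S) = 0$ for a general member $g$ does not by itself force freeness, since $\mathcal{O}(e+1) \oplus \mathcal{O}(-1)$ on $\mathbb{P}^1$ has vanishing $h^1$ and degree $e$ but fails to be globally generated; the deformation-theoretic openness argument via global generation of $f^*T_S$ on $Z$ appears genuinely essential to rule out such splittings.
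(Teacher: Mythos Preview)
Your proof is correct and follows the same inductive strategy as the paper: reduce via Lemma~\ref{lemm:weakmbbfordpsurfaces} to a two-component stable map whose restrictions are free, then use semicontinuity to pass freeness to the general smoothing. The only difference is in how you execute the final step. You argue that $f^{*}T_{S}$ is globally generated on $Z$, pick a smooth point, and use openness of global generation along a section of the universal family. The paper instead packages freeness as a single cohomological vanishing: choosing a line bundle $\mathcal{L}$ on $Z$ of bidegree $(-1,0)$ (equivalently $\mathcal{O}(-\sigma)$ for a section $\sigma$ through a smooth point of $Z_{1}$), one has $H^{1}(Z, f^{*}T_{S}\otimes\mathcal{L})=0$ directly from Proposition~\ref{prop-vanH1NodalCurve}, and upper semicontinuity of $h^{1}$ then gives $H^{1}(\mathbb{P}^{1}, g^{*}T_{S}(-1))=0$ for a general smoothing $g$, which is exactly freeness. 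This twist-by-$\mathcal{L}$ trick sidesteps the issue you flagged about $h^{1}(g^{*}T_{S})=0$ not implying freeness, and avoids any discussion of normal sheaves or Theorem~\ref{thm-normalBundleNodalCurve}; it is worth internalizing as the cleanest way to encode ``free'' as an upper-semicontinuous condition on a family of genus~$0$ stable maps.
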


\begin{proof}
Let $C$ be a general member of $M$ and let $-K_S \cdot C = d$. We prove our statement by induction on $d$.  By assumption the desired statement holds when $d \leq 3$.

When $d \geq 4$, we apply Bend-and-Break and Lemma \ref{lemm:weakmbbfordpsurfaces} to find a stable map $f:Z \to S$ parametrized by $M$ whose domain has exactly two irreducible components.  Furthermore Lemma \ref{lemm:weakmbbfordpsurfaces} guarantees that the images $C_{1},C_{2}$ are general in their respective families, hence free.  

Let $B$ be a general curve in $\overline{M}_{0,0}(S)$ through $f$.  After perhaps replacing $B$ by a cover, we obtain a universal family $\mathcal{U}$ over $B$ equipped with a map $g: \mathcal{U} \to S$ such that the central fiber is $Z$ and $g|_{Z} = f$.  Since a general fiber of $\mathcal{U} \to B$ is isomorphic to $\mathbb{P}^{1}$, this map admits a section.  Thus there is a line bundle $\mathcal{L}$ on $\mathcal{U}$ which has degree $-1$ against one component of $Z$ and degree $0$ against the other.  Then we have $H^{1}(Z,g^{*}T_{S} \otimes \mathcal{L}|_{Z}) = 0$.  By upper semicontinuity of cohomology groups we deduce that the general map parametrized by $M$ is free. Indeed, let $h: C \to S$ be a general stable map parametrized by $B$. Then $H^1(C, h^*T_S\otimes O(-1)) = 0$. Thus if we write $h^*T_S = \mathcal O(a_1) \oplus \mathcal O(a_2)$, then we must have $a_i \geq 0$ proving that $h:C \to S$ is free.
\end{proof}

\begin{proof}[Proof of Theorem \ref{theo:maintheorem2}:]
Combine Theorem \ref{theo:separablefamlowdegree} and Proposition \ref{prop: free}.
\end{proof}

\section{Del Pezzo surfaces of degree 1 in characteristic 0}  \label{sect:dpchar0}

In this section we work over a fixed algebraically closed field $k$ of characteristic $0$.
Let $S$ be a del Pezzo surface over $k$ and $\beta$ a class in $N_1(S)_\bZ$. Denote by $\overline{M}^{bir}(S,\beta)$ the closure of the locus in the Kontsevich space $\overline{M}_{0,0}(S,\beta)$ parametrizing generically injective maps with irreducible domains.  Our goal in this section is to prove the irreducibility of $\overline{M}^{bir}(S,\beta)$.

\cite{Testathesis} proved this result when $S$ has degree $\geq 2$ or when $S$ is general of degree $1$:

\begin{theo}[\cite{Testathesis} Section 2.2 and Theorem 4.5]
Let $S$ be a del Pezzo surface of degree $d$ over an algebraically closed field of characteristic $0$.  Suppose that either $d \geq 2$ or $d=1$ and $S$ is general in moduli.  Then for every numerical class $\beta$ on $S$ the scheme $\overline{M}^{bir}(S,\beta)$ is either irreducible or empty.
\end{theo}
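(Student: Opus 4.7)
The plan is to induct on the anticanonical degree $e = -K_{S}\cdot\beta$, with base cases $e \leq 3$ handled directly and the inductive step handled by bend-and-break together with a boundary-smoothness argument.

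For the base cases, a direct classification shows that for each fixed numerical class $\beta$ with $-K_{S}\cdot\beta \leq 3$, the locus $\overline{M}^{bir}(S,\beta)$ has at most one component of the expected dimension. Lines are either $(-1)$-curves (numerically rigid, so $\overline{M}^{bir}(S,\beta)$ is a reduced point) or, when $d=1$, cuspidal members of $|-K_{S}|$ forming a single irreducible pencil. Conics are fibers of conic bundles, members of $|-K_{S}|$ when $d=2$, or pullbacks of conics under a contraction $\phi : S \to S'$ to a weak del Pezzo surface of degree $2$; a case-by-case Hodge-Index/Riemann-Roch analysis (essentially as in Section~\ref{sect:expdimlowdegree}) shows irreducibility for each class. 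Rational cubics require a slightly more involved argument: in characteristic $0$ they are free (their normal sheaf is globally generated), the incidence variety over $e-1=2$ general points is smooth of dimension zero, and one identifies $\overline{M}^{bir}(S,\beta)$ with a $\bP^{2}$-bundle or linear system on a contraction.

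For the inductive step, assume $e \geq 4$ and that the theorem holds for all nef classes of anticanonical degree strictly less than $e$. Let $M_{1},M_{2}$ be two components of $\overline{M}^{bir}(S,\beta)$. In characteristic $0$ these both have the expected dimension $e-1$ (e.g.\ by \cite[Theorem 1.1]{LTCompos} combined with the inductive hypothesis, or more simply because larger-than-expected dimension would, by bend-and-break, force a lower-degree family with larger-than-expected dimension). Fix $e-2$ general points on $S$. By Lemma~\ref{lemm:weakmbbfordpsurfaces} (whose characteristic-$0$ version is essentially \cite[Lemma 1.14]{Testa09}), each $M_{i}$ contains in its closure a stable map $f_{i}\colon Z_{1}^{(i)}\cup Z_{2}^{(i)} \to S$ passing through the fixed points, whose two components map birationally onto general members of components $N_{1}^{(i)},N_{2}^{(i)} \subset \overline{M}^{bir}(S,\beta_{j}^{(i)})$ with $\beta_{1}^{(i)} + \beta_{2}^{(i)} = \beta$. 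By the inductive hypothesis each $N_{j}^{(i)}$ is the unique component in its class, and by the genericity of the points the curves meet transversally, so the normal sheaf $N_{f_{i}/S}$ is locally free and a Mayer–Vietoris computation gives $H^{1}(Z,N_{f_{i}/S})=0$, i.e.\ $\overline{M}_{0,0}(S,\beta)$ is smooth at $[f_{i}]$. Hence $M_{i}$ is the unique component of $\overline{M}^{bir}(S,\beta)$ containing the boundary stratum $\Delta(\beta_{1}^{(i)},\beta_{2}^{(i)})$ parametrizing gluings along a single node of general curves in those two classes.

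The main obstacle is matching the two decompositions: if $\{\beta_{1}^{(1)},\beta_{2}^{(1)}\} = \{\beta_{1}^{(2)},\beta_{2}^{(2)}\}$ then $M_{1} = M_{2}$ and we are done, but a priori bend-and-break could produce different decompositions on different components. I would resolve this by a connectedness argument on the set of admissible decompositions. Choose one further general point, producing a reducible limit whose domain has three components (e.g.\ by a second application of bend-and-break to one of $Z_{j}^{(i)}$ after fixing points on it); this yields a smooth boundary point of $\overline{M}_{0,0}(S,\beta)$ which lies simultaneously in the closures of two distinct boundary strata $\Delta(\alpha,\beta-\alpha)$ and $\Delta(\alpha',\beta-\alpha')$ obtained by smoothing different nodes of the three-component curve. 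This shows that any two decompositions reachable by bend-and-break from a component of $\overline{M}^{bir}(S,\beta)$ are linked through such a chain, and hence any two components $M_{1},M_{2}$ must coincide. For degree $1$ del Pezzo surfaces general in moduli, the only extra input needed is that, thanks to genericity, the elliptic fibration defined by $|-K_{S}|$ has no pathological fibers, so every three-component reducible limit produced above has locally free normal sheaf and gives a smooth point of the Kontsevich space, closing the argument.
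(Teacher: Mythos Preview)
The paper does not prove this statement; it is quoted as a known result from Testa's thesis (\cite{Testathesis}, Section 2.2 and Theorem 4.5), so there is no proof in the paper to compare against directly. Your proposal is a plausible reconstruction of the bend-and-break strategy that underlies Testa's argument, but as written it has a genuine gap in the inductive step.

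The gap is in your ``connectedness argument on the set of admissible decompositions.'' You correctly identify that breaking $M_{1}$ and $M_{2}$ through $e-2$ general points may produce \emph{different} splittings $\beta = \beta_{1}^{(1)}+\beta_{2}^{(1)}$ and $\beta = \beta_{1}^{(2)}+\beta_{2}^{(2)}$, and you propose to link them by passing through three-component degenerations. But what you have actually shown is only that from a given two-component boundary point one can reach \emph{some} neighboring decompositions by smoothing different nodes of a further break. You have not shown that the resulting graph on decompositions is connected, i.e.\ that \emph{any} two splittings of $\beta$ into nef summands of positive anticanonical degree are joined by a chain of such moves. This is a nontrivial combinatorial statement about the nef cone of a del Pezzo surface, and Testa's proof handles it by driving every component down to a \emph{specific} canonical boundary point (built from low-degree pieces coming from Lemma~\ref{lemm:nefdecomposition}), not by an abstract connectedness claim. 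Your sketch does not supply this, and without it the argument does not close.

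It is also worth noting that the paper's own extension of Testa's result to arbitrary degree~$1$ surfaces (Section~\ref{sect:dpchar0}) takes a rather different route: it fixes $e-2$ general points, shows the resulting one-dimensional locus $B \subset \overline{M}^{bir}(S,\beta)$ lies in the smooth locus of $\overline{M}_{0,0}(S,\beta)$ (Lemma~\ref{curve-general}), and then proves $B$ is \emph{connected} via the Fulton--Lazarsfeld connectedness theorem applied to the map into the linear system of plane curves (Proposition~\ref{connected-curve}). That approach sidesteps the decomposition-matching problem entirely and is what makes the specialization argument in Section~\ref{sect:irrcharp} work. If you want to repair your argument, either supply the missing combinatorial connectedness (following Testa) or switch to the Fulton--Lazarsfeld approach.
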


We focus on the last case of arbitrary del Pezzo surfaces of degree $1$.  We will use the following result:

\begin{prop}[\cite{Testathesis} Proposition 4.6]
Let $S$ be a del Pezzo surface over an algebraically closed field of characteristic $0$.  Suppose that $\beta$ is a nef numerical class on $S$.  If $\beta$ is not the multiple of a $-K_{S}$-conic, then $\overline{M}^{bir}(S,\beta)$ is non-empty.
\end{prop}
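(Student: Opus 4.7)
The plan is to prove the statement by strong induction on the anticanonical degree $d := -K_{S} \cdot \beta$, combining an inductive decomposition of $\beta$ with the gluing-and-smoothing technology already set up in Section~2 of the paper.

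For the base cases $d \leq 2$, I would invoke the low-degree classification in Section~\ref{sect:expdimlowdegree}. A nef class with $d = 1$ is either the class of a $(-1)$-curve, a $-K_{S}$-line, or, in degree $1$, the class $-K_{S}$ itself; in the last case a general member of $|-K_{S}|$ in characteristic $0$ is a smooth elliptic curve, so one must instead use a special singular member (the existence of rational members in the anticanonical pencil on a degree-$1$ del Pezzo is classical). A nef class with $d = 2$ is either a multiple of a $-K_{S}$-conic (excluded by hypothesis unless the multiple is $1$, in which case the conic itself gives a point of $\overline{M}^{bir}(S,\beta)$), or a class in $|-K_S|$ on a degree-$2$ surface for which a rational nodal member exists by Bertini plus genericity. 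Thus $\overline{M}^{bir}(S,\beta)$ is non-empty in all allowed base cases.

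For the inductive step, suppose $d \geq 3$. The key point is to decompose $\beta = \beta_{1} + \beta_{2}$ where both summands are nef, each has $0 < -K_{S} \cdot \beta_{i} < d$, and neither is in the ``excluded'' locus (a proper multiple of a $-K_{S}$-conic). The existence of such a decomposition uses the structure of the nef cone: on a del Pezzo surface, the extremal rays of $\overline{\mathrm{Eff}}(S)$ are spanned by $(-1)$-curves, and one may subtract a $(-1)$-curve class $E$ from $\beta$ provided $(\beta - E)$ remains nef. For $\beta$ not proportional to a $-K_{S}$-conic one shows that such an $E$ can always be found (with $\beta - E$ itself not a proper multiple of a $-K_{S}$-conic), and the classes for which no valid decomposition exists are exactly the excluded ones. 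By the inductive hypothesis applied to $\beta_{1}, \beta_{2}$, one can pick general $[f_{i} : \mathbb{P}^{1} \to S] \in \overline{M}^{bir}(S, \beta_{i})$.

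Now attach the two copies of $\mathbb{P}^{1}$ at any common point of the images $f_{i}(\mathbb{P}^{1})$ (which exist since both curves are non-constant on a surface and one may always arrange a point of incidence by moving $f_{i}$ in its family) to form a stable map $f : Z_{1} \cup_{p} Z_{2} \to S$ of genus $0$ in class $\beta$, birational on each component. It remains to deform $f$ to a map with irreducible domain. Because $\beta_{1}$ and $\beta_{2}$ are nef and the $f_{i}$ are general members of dominant families, the restricted normal sheaves $N_{f_{i}/S}$ have non-negative degree and are globally generated off finitely many points; combining Theorem~\ref{thm-normalBundleNodalCurve} with Proposition~\ref{prop-vanH1NodalCurve} yields $H^{1}(Z, N_{f/S}) = 0$. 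Hence $\overline{M}_{0,0}(S,\beta)$ is smooth at $[f]$ and $f$ smooths to an irreducible birational stable map, producing the required point of $\overline{M}^{bir}(S,\beta)$.

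The main obstacle is the combinatorial check in the decomposition step: one must verify that for every nef $\beta$ which is not a proper multiple of a $-K_{S}$-conic there is a decomposition $\beta = \beta_{1} + \beta_{2}$ of the required form, and conversely that if no such decomposition exists then $\beta$ must be a proper multiple of a $-K_{S}$-conic. This is a purely lattice-theoretic statement about the intersection form on a del Pezzo surface of degree $1$, but it is the step that really uses the hypothesis and is where degree-$1$ specific subtleties (e.g.\ the presence of $-K_{S}$ itself as a nef class of self-intersection $1$) enter; everything else is formal gluing and smoothing.
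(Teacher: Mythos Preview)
The paper does not give its own proof of this proposition; it is cited from \cite{Testathesis}. The closest in-paper argument is the proof of the characteristic-$p$ analogue, Proposition~\ref{prop:existfreecurve}, which in characteristic $0$ specializes to (a slightly stronger form of) the statement, so the comparison is against that.

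Your approach and the paper's differ in the decomposition step. You induct on $-K_{S}\cdot\beta$ by peeling off a single $(-1)$-curve. The paper instead uses Testa's structural decomposition (Lemma~\ref{lemm:nefdecomposition}): any nef divisor on a degree-$d$ del Pezzo can be written as $\sum_{i\geq d} n_{i}\,\phi_{i}^{*}(-K_{Y_{i}}) + \phi_{8}^{*}D'$ along a fixed chain of blow-downs $S=Y_{d}\to\cdots\to Y_{8}$. Each $|-K_{Y_{i}}|$ with $i\geq 2$ is represented by a free rational curve (a strict transform of a plane cubic through the blown-up points), and $D'$ on a degree-$8$ surface is handled directly. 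One glues a chain of these free curves and smooths. The one delicate case is $n_{1}=1$ on a degree-$1$ surface, where the rational member of $|-K_{S}|$ is not free; here the paper builds a comb with that curve as handle and at least two $(-1)$-curves as teeth, so that Theorem~\ref{thm-normalBundleNodalCurve} forces $H^{1}(N_{f})=0$. The advantage of this route is that the decomposition is fixed combinatorially in advance, so the case analysis you flag as ``the main obstacle'' (checking that $\beta-E$ is nef and not a proper multiple of a conic) never arises.

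Your proposal also has a few slips worth noting. You require both $\beta_{1},\beta_{2}$ to be nef but then take $\beta_{2}=E$ a $(-1)$-curve, which is not nef; this is harmless for the gluing but makes the induction hypothesis inapplicable to $\beta_{2}$ as stated. Your base case $d=2$ invokes ``Bertini plus genericity'' to produce a rational member of $|-K_{S}|$ on a degree-$2$ surface, but Bertini gives \emph{smooth} (hence elliptic) members; the rational ones are preimages of tangent lines to the branch quartic and require a separate argument. Finally, your smoothing step asserts that $N_{f_{i}/S}$ has non-negative degree, which fails exactly when $\beta_{i}$ is a $-K_{S}$-line on a degree-$1$ surface (there $N_{f_{i}/S}\cong\mathcal{O}(-1)$ or $\mathcal{O}(-2)\oplus k(p)$); the paper's comb construction is what repairs this case.
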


We will prove the irreducibility of $\overline{M}^{bir}(S,\beta)$ by deforming to a general del Pezzo surface of degree $1$.  The key construction is the following:

\begin{lemm}\label{curve-general}  
Suppose that $S$ is a del Pezzo surface over an algebraically closed field of characteristic $0$ and $\beta \in N_1(S)_\bZ$ is such that $e:=-K_S \cdot \beta \geq 3$ and $\overline{M}^{bir}(S,\beta)$ is non-empty.  Let  $q_1, \dots, q_{e-2}$ be general points in $S$ and let 
$B$ be the locus in  $\overline{M}^{bir}(S,\beta)$ parametrizing morphisms whose images pass through $q_1, \dots, q_{e-2}$. Then $B$ is of dimension 1 and lies in the smooth locus of $\overline{M}_{0,0}(S,\beta)$.
There are 
finitely many maps parametrized by $B$ with reducible domains. 
\end{lemm}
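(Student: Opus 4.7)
The plan is to study the fiber of the evaluation map $\mathrm{ev}\colon\overline{M}^{bir}_{0,e-2}(S,\beta)\to S^{e-2}$, identifying $B$ (up to the forgetful map) with the general fiber, and to analyze the few special members of $B$ via Bend-and-Break together with a normal sheaf calculation.

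First I will establish $\dim B=1$. In characteristic zero, every dominant component of $\overline{M}^{bir}(S,\beta)$ has the expected dimension $e-1$ and its generic member is a free rational curve; these are the characteristic-zero specializations of Theorem~\ref{theo:maintheorem1} and Proposition~\ref{prop: free} (and were already known to Testa). Since the $q_i$ are general, $B$ meets only dominant components, and on any such component the map $\mathrm{ev}$ is smooth at a free immersion because its tangent map at $[(f;p_1,\dots,p_{e-2})]$ is controlled by the global generation of $f^{*}T_S$. The general fiber of $\mathrm{ev}$ therefore has pure dimension $(e-1)-(e-2)=1$, and the forgetful map sends it birationally to $B$.

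Next I will analyze the reducible $[f]\in B$. If $C=C_1\cup\cdots\cup C_k$ with $k\ge 2$, partition the components into those whose images move in a dominant family and those which are rigid; no $q_i$ can lie on the image of a rigid component. By the dimension estimate in Lemma~\ref{lemm:weakmbbfordpsurfaces}, each dominant $C_i$ of anticanonical degree $d_i$ carries at most $d_i-1$ of the $q_j$, and $\sum d_i\le e$, so the number $s$ of dominant components satisfies $e-2\le\sum_{\mathrm{dom}}(d_i-1)\le e-s$ and hence $s\le 2$. A short case check leaves only two configurations: either $C=C_1\cup C_2$ with two dominant free components whose degrees sum to $e$, or $C$ is the union of a single dominant degree-$(e-1)$ component with a rigid $(-1)$-curve. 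In each configuration the dominant pieces pass through the maximum possible number of the prescribed general points, which selects finitely many stable maps; summing over the finitely many combinatorial types yields the last assertion.

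Finally I will verify that $B$ lies in the smooth locus of $\overline{M}_{0,0}(S,\beta)$ by showing $H^1(C,N_{f/S})=0$ at every $[f]\in B$. An incidence-variety dimension count shows that the sublocus of each dominant component parametrizing curves with $c\ge 2$ cusps has dimension at most $e-1-c$, so the associated incidence in $\overline{M}^{bir}_{0,e-2}(S,\beta)$ has dimension at most $(e-1-c)+(e-2)<\dim S^{e-2}$ and does not dominate $S^{e-2}$; hence the irreducible $[f]\in B$ are either free immersions with $N_{f/S}\cong\mathcal{O}_{\mathbb{P}^1}(e-2)$, or rational curves with a single cusp, in which case $N_{f/S}\cong\mathcal{O}_{\mathbb{P}^1}(e-3)\oplus k(p)$. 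Since $e\ge 3$ both have vanishing $H^1$. For the reducible configurations identified above, Theorem~\ref{thm-normalBundleNodalCurve} shows that the restriction of $N_{f/S}$ to each component is that component's own normal sheaf twisted up by the nodes lying on it; on a free component this remains a globally generated line bundle of non-negative degree, and on the rigid $(-1)$-component meeting the rest of $C$ in $m\ge 1$ nodes it is $\mathcal{O}_{\mathbb{P}^1}(m-1)$, also globally generated with vanishing $H^1$. Proposition~\ref{prop-vanH1NodalCurve} then gives $H^1(C,N_{f/S})=0$ throughout. The main obstacle I anticipate is the uniform handling of these degenerations: ruling out irreducible members of $B$ with uncontrollably many cusps, and simultaneously verifying the free/rigid decomposition for the reducible stable maps in $B$, both of which are driven by the same incidence/Bend-and-Break dimension bookkeeping.
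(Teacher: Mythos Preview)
Your overall strategy matches the paper's, but there is a genuine gap in the reducible case when $S$ has degree $1$. You assert that the rigid anticanonical-degree-$1$ component must be a $(-1)$-curve, and your normal-sheaf computation uses $N_{f_2}=\mathcal O_{\mathbb P^1}(-1)$. But by Lemma~\ref{lemm:lines}, on a degree-$1$ del Pezzo the $-K_S$-lines also include the finitely many singular rational members of $|-K_S|$, which can be nodal or cuspidal. For a cuspidal such curve one has $N_{f_2}=\mathcal O(-2)\oplus k(p)$, not $\mathcal O(-1)$, so after adding the node your restriction is $\mathcal O(-1)\oplus k(p)$ rather than $\mathcal O$; Proposition~\ref{prop-vanH1NodalCurve} does not apply directly and you must verify $H^1$ by hand. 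The paper treats exactly these three subcases ($(-1)$-curve, nodal, cuspidal) separately. Relatedly, to invoke Theorem~\ref{thm-normalBundleNodalCurve} you need $f$ to be a local immersion at the node, i.e.\ the images of $Z_1$ and $Z_2$ to meet transversally; the paper secures this by citing \cite[Proposition 2.8]{BLRT20}, using that $Z_1$ is general and there are only finitely many lines. You do not address this.

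For the irreducible members of $B$ you work harder than necessary. In characteristic $0$ a rational curve through a general point is free, so $H^1(\mathbb P^1,f^*T_S)=0$ and hence $H^1(N_{f/S})=0$ immediately, regardless of how many cusps $f$ has; the paper uses exactly this. Your incidence bound ``curves with $c$ cusps have dimension $\le e-1-c$'' is plausible but is asserted without proof, and in any case does not control the order of a single ramification point, so the conclusion $N_{f/S}\cong\mathcal O(e-3)\oplus k(p)$ for the one-cusp case is not fully justified by your argument.
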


%((
\begin{proof}
Let $f: Z \to S$ be a stable map parametrized by $B$. If $Z$ is irreducible, then $f$ is free and so $(Z,f)$ a smooth point of the moduli space. Suppose $Z$ is reducible. 
Let $Z_1, \dots, Z_m$, $m \geq 2$ be the irreducible components of $Z$ not contracted by $f$ and let $e_i = \deg f|_{Z_i}$ and $f_i=f|_{Z_i}$. 
Suppose $f_1, \dots, f_k$ are free maps and $f_{k+1}, \dots, f_m$ are non-free. Then the image of $Z_i$, $1 \leq i \leq k$, can pass through at most $e_{i}-1$ general points. So $e-2 \leq e_1 + \dots +e_k -k$. On the other hand $\sum_{i=1}^m e_i =e$, so there are two possibilities: either 1) $m=k=2$ or 2) $k=1, m=2, e_1= e-1, e_2=1$.  And in either case there cannot be a contracted component. In the first case, the image of $Z_i$, $i=1,2$, has to pass through $e_i-1$ of the points, and so there are finitely many choices for each $f_i$. Since 
the images of $f_1$ and $f_2$ pass through general points, they are free and so $f$ is a smooth point of the moduli space. 

In the second case, the image of $Z_2$ is a $-K_S$-line, and the image of $Z_1$ passes through $q_1, \dots, q_{e-2}$, so $f_1$ is general in its moduli and $N_{f_1}=\mathcal O(e-3)$. Since there are finitely many lines on $S$, \cite[Proposition 2.8]{BLRT20} shows that the images of $Z_1$ and $Z_2$ meet transversally.
Thus $f$ is a local immersion in an open neighborhood of the node of $Z$. If the image of $Z_2$ is a $(-1)$-curve or a nodal curve, 
then $N_{f_1}=\mathcal O(e-3)$ and $N_{f_2}= \mathcal O(-1)$, so $N_f|_{Z_1} = \mathcal O(e-2)$, and $N_f|_{Z_2}=\mathcal O$. If the image of $Z_2$ has a cusp, then 
$N_{f_1} = \mathcal O(e-3)$ and $N_{f_2}= \mathcal O(-2) \oplus k(p)$ where $p$ is the point at which $f$ is ramified. Therefore, $N_f|_{Z_1}=\mathcal O(e-2)$, and 
$N_f|_{Z_2} = \mathcal O(-1) \oplus k(p)$.  In both cases $(Z,f)$ is a smooth point of the moduli space. 
\end{proof}

\begin{prop}\label{connected-curve}
Suppose $S$ is a del Pezzo surface over an algebraically closed field of characteristic $0$ of degree $9-d \geq 1$. Fix $\beta \in N_1(S)_\bZ$ such that $e:=-K_S \cdot \beta \geq 3$. Then for general points $q_1, \dots, q_{e-2}$ 
on $S$, the locus $B$  in  $\overline{M}^{bir}(S, \beta)$ parametrizing morphisms whose images pass through $q_1, \dots, q_{e-2}$ is either empty or a connected curve.
\end{prop}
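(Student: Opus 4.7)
The plan is to deduce the connectedness of $B$ by specialization from the case of a general del Pezzo surface, where the result is accessible via Testa's irreducibility theorem for $\overline{M}^{bir}(S,\beta)$. If $B$ is empty there is nothing to prove, so we assume $B \neq \emptyset$. Then Lemma \ref{curve-general} tells us that $B$ is a smooth curve of pure dimension $1$ contained in the smooth locus of $\overline{M}_{0,0}(S,\beta)$, and we only need to show connectedness. For $d \geq 2$ Testa's results already give irreducibility of $\overline{M}^{bir}(S,\beta)$ so there is nothing to do; the substance of the argument is for $d=1$.

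First I would construct a suitable family. Over an algebraically closed field of characteristic $0$, every del Pezzo surface of degree $1$ is the blow-up of $\mathbb{P}^{2}$ at $8$ points in general position, and these form an irreducible parameter space. This produces a smooth family $\pi : \mathcal{S} \to T$ over a smooth connected base curve $T$ with a distinguished point $0 \in T$ such that $\mathcal{S}_{0} = S$ and such that the generic fiber $\mathcal{S}_{\eta}$ is a general del Pezzo surface of degree $1$ to which Testa's theorem applies. Since the Picard rank is constant in this family, the class $\beta$ extends to a class $\beta_{t} \in N_{1}(\mathcal{S}_{t})_{\mathbb{Z}}$ on every fiber. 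I would also choose sections $\sigma_{1},\dots,\sigma_{e-2}: T \to \mathcal{S}$ with $\sigma_{i}(0) = q_{i}$, and chosen so that $(\sigma_{1}(t),\dots,\sigma_{e-2}(t))$ is a general $(e-2)$-tuple of points in $\mathcal{S}_{t}$ for $t$ in a dense open subset of $T$.

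Next, consider the relative Kontsevich moduli space $\mathcal{M} := \overline{M}^{bir}(\mathcal{S}/T,\beta) \to T$ and the closed subscheme $\mathcal{B} \subset \mathcal{M}$ cut out by the incidence conditions coming from the sections $\sigma_{i}$. The morphism $\mathcal{B} \to T$ is proper. Lemma \ref{curve-general} applied fiberwise shows that whenever $\mathcal{B}_{t}$ is non-empty, it is a smooth curve of pure dimension $1$, lying in the smooth locus of $\mathcal{M}_{t}$. Applying Testa's irreducibility theorem together with a standard incidence/Bertini argument on $\mathcal{S}_{\eta}$ (using the fact that $\overline{M}^{bir}(\mathcal{S}_{\eta},\beta)$ is irreducible, and the locus of stable maps through a sufficiently general point is an irreducible divisor), we conclude that the generic fiber $\mathcal{B}_{\eta}$ is a connected (in fact irreducible) smooth curve.

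To transfer connectedness to the special fiber, I would invoke the standard principle that for a proper flat morphism to a connected base with geometrically connected generic fiber, every fiber is geometrically connected. Flatness of $\mathcal{B} \to T$ comes from miracle flatness: $\mathcal{B}$ is smooth (the point conditions imposed by the sections $\sigma_{i}$ cut $\mathcal{B}$ out transversally from the smooth locus of $\mathcal{M}$, as can be checked via the universal evaluation morphism), $T$ is regular, and the fibers have constant dimension $1$ by Lemma \ref{curve-general}. Then $\pi_{*}\mathcal{O}_{\mathcal{B}}$ is locally free of rank $h^{0}(\mathcal{O}_{\mathcal{B}_{\eta}}) = 1$, hence equals $\mathcal{O}_{T}$, and Zariski's connectedness theorem yields that $\mathcal{B}_{0} = B$ is connected. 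The main obstacle will be handling the two technical points of flatness and of connectedness of the generic fiber $\mathcal{B}_{\eta}$: the first could alternatively be bypassed by working with the Stein factorization $\mathcal{B} \to T' \to T$ and arguing that $T' \to T$ is a finite birational map to the smooth $T$, hence an isomorphism, while the second requires a careful incidence-geometry argument beyond the bare irreducibility statement of Testa.
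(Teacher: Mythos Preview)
Your overall strategy—establish the result for a general del Pezzo surface and then specialize via a family argument with Stein factorization—matches the paper's approach, and your treatment of the specialization step is essentially correct. (The paper works over the full parameter space $V_d \subset \Hilb^d(\mathbb{P}^2)$ rather than a curve $T$, and argues via ``every component of $\mathcal{M}$ dominates $V_d$'' rather than flatness, but these are minor variations on the same idea.)

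The genuine gap is in the base case, and it also undermines your ``nothing to do for degree $\geq 2$'' remark. You assume that once $\overline{M}^{bir}(S,\beta)$ is known to be irreducible, a ``standard incidence/Bertini argument'' yields connectedness of $B$, via the claim that the locus of stable maps through a general point is an irreducible divisor. But this is not automatic: the incidence conditions are not a priori members of any basepoint-free or ample linear system on $\overline{M}^{bir}(S,\beta)$, so classical Bertini does not apply, and there is no direct reason the general fiber of the evaluation map should be connected. You correctly flag this step as requiring ``careful incidence-geometry beyond the bare irreducibility statement of Testa,'' but you do not supply it, and it is precisely the heart of the matter.

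The paper's solution is to \emph{linearize} the incidence conditions. Fixing a blow-down $\pi: S \to \mathbb{P}^2$, one pushes stable maps forward to plane curves of degree $m = H \cdot \beta$, obtaining (after resolving indeterminacy) a morphism $\widetilde{\alpha}: \widetilde{M} \to \mathbb{P}^N$ into the complete linear system of degree-$m$ plane curves. In $\mathbb{P}^N$ the conditions ``image passes through $\pi(q_i)$'' are genuine hyperplanes, cutting out a linear subspace $\Lambda$ of codimension $\leq e-2$. The Fulton--Lazarsfeld connectedness theorem then says that for any morphism from an irreducible projective variety to $\mathbb{P}^N$, the preimage of a linear space of codimension at most one less than the dimension of the image is connected; since the image of $\widetilde{\alpha}$ has dimension $e-1$, this gives connectedness of $\widetilde{\alpha}^{-1}(\Lambda)$, hence of $B$. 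This construction of the map to $\mathbb{P}^N$ is exactly the missing ingredient in your sketch.
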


\begin{proof}
Fix a blow-down map $\pi: S \to \mathbb{P}^2$.  Write $\beta = mH - k_1E_1-\dots-k_{d}E_d$ where $H$ is the pull-back of the hyperplane class via $\pi$, the $E_i$ are $\pi$-exceptional divisors, and 
$k_i \geq 0$. First suppose $S$ is general. By \cite{Testathesis} $\overline{M}^{bir}(S, \beta)$ is irreducible. Let $U$ be the open subset of $\overline{M}^{bir}(S, \beta)$  parametrizing 
generically injective morphisms from $\mathbb{P}^1$ to $S$. By composing with the blow-down $\pi: S \to \mathbb{P}^2$ we get an embedding from $U$ to the Hilbert scheme of curves of degree $m$ in $\mathbb{P}^2$. We let $\mathbb{P}^N$ denote the projective space of 
curves of degree $m$ in $\mathbb{P}^2$, so we get a morphism $U \to \mathbb{P}^N$
and thus a rational map $\alpha: \overline{M}^{bir}(S, \beta) \dashrightarrow \mathbb{P}^N$. Resolving the indeterminacy locus of $\alpha$, we get  morphisms $\widetilde{\alpha}: \widetilde M \to \mathbb{P}^N$ and $p: \widetilde{M} \to \overline{M}^{bir}(S, \beta)$ such that $\widetilde{\alpha}= \alpha \circ p$.  The image of $\alpha$ is $(e-1)$-dimensional and the 
images of $q_1, \dots, q_{e-2}$ in $\mathbb{P}^2$ give a linear subvariety $\Lambda$ of codimension $ \leq e-2$ in $\mathbb{P}^N$ parametrizing curves of degree $m$ passing through them. 
By \cite[Theorem 2.1]{FL81}, $\widetilde{\alpha}^{-1}(\Lambda)$ is connected.  
By the above lemma, a general point in every irreducible component of $B$ has an irreducible domain and is therefore in the domain of $\alpha$. 
Since $B= p(\widetilde{\alpha}^{-1}(\Lambda))$, we conclude that $B$ is connected as well. 

Now suppose that $S$ is an arbitrary del Pezzo surface of degree $9-d$.  Let $V_d$ be the open subvariety of $\Hilb^{d} (\mathbb{P}^2)$ parametrizing 
$d$ points in general position in $\mathbb{P}^2$ (in the usual sense for del Pezzo surfaces), $Z$ the universal Hilbert scheme over $V_d$ and $\mathcal S$ the blow-up of $\mathbb{P}^2 \times V_d$ with center  
$Z$. Then there is a point $u \in V_d$ such that $S= \mathcal S_u$. Denote by $q'_1, \dots, q'_{e-2}$ the images of $q_1, \dots, q_{e-2}$ in $\mathbb{P}^2$. For any map $f: \mathbb{P}^1 \to \mathcal S_u$ whose image passes through $q_1, \dots, q_{e-2}$ we have
$H^1(N_f(-e+2))=0$, so the deformations of $f$ yield a family of stable maps from $\mathbb{P}^1$ to the fibers of $\mathcal{S} \to V_{d}$ passing through the pre-images of $q'_1, \dots, q'_{e-2}$ which has the expected dimension.
Let $\mathcal M$ be the family of stable maps to fibers of $\mathcal S \to V_d$ passing through the preimages of $q'_1, \dots, q'_{e-1}$.  We claim that every irreducible component of $\mathcal{M}$ dominates $V_{d}$.  This follows from a dimension calculation: we know that each fiber of $\mathcal{M} \to V_{d}$ is at most $1$-dimensional.  A normal bundle calculation shows that each component of $\mathcal{M}$ has dimension at least $\dim(V_{d}) + 1$.  Together these observations prove the claim.

Consider the map from $\mathcal M$ to $V_{d}$.  Since the general fiber of this map is connected and every component of $\mathcal{M}$ dominates $V_{d}$, we see that $\mathcal{M}$ is also connected.  Since $V_d$ is smooth and the fiber over a general point of $V_d$ is connected, the Stein factorization of the proper map $\mathcal{M} \to V_{d}$ is trivial and thus the fiber over every closed point $u$ is connected. 
\end{proof}

\begin{theo}\label{c-irreducible}
If $S$ is a smooth del Pezzo surface of degree $1$ over an algebraically closed field of characteristic $0$, then for every $\beta \in N_1(S)_\bZ$ with $-K_S \cdot \beta \geq 3$, $\overline{M}^{bir}(S, \beta)$ is either irreducible or empty.
\end{theo}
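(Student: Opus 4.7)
The plan is to mimic the strategy used for higher-degree del Pezzo surfaces by combining two complementary pieces of information developed in this section: connectedness of the locus $B$ of stable maps through $e-2$ general points (Proposition \ref{connected-curve}), and the fact that this $B$ lies in the smooth locus of $\overline{M}_{0,0}(S,\beta)$ (Lemma \ref{curve-general}). The goal is to argue that every irreducible component of $\overline{M}^{bir}(S,\beta)$ meets $B$, and then to use connectedness-plus-smoothness to force all components to coincide.

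First I would reduce to the non-empty case and establish that every irreducible component $M_i$ of $\overline{M}^{bir}(S,\beta)$ has the expected dimension $e-1$. Since we are in characteristic $0$, a general member of $|-K_S|$ on a smooth degree $1$ del Pezzo is smooth by Bertini, so Theorem \ref{theo:maintheorem1} applies: the only components of larger-than-expected dimension parametrize multiple covers of rational curves of anticanonical degree $\leq 1$. Since $\overline{M}^{bir}$ parametrizes generically injective maps and $e \geq 3$, no such pathology can occur, and thus $\dim M_i = e-1$.

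Next I would check that each $M_i$ is dominant over $S$. Since the image map from the open locus of birational maps in $M_i$ to the Hilbert scheme of curves on $S$ is injective (two birational maps with the same image differ by an automorphism of $\mathbb{P}^1$, which is quotiented out in $\overline{M}_{0,0}$), we obtain an $(e-1)$-dimensional family of distinct curves in $S$. As $e-1 \geq 2$, this family cannot be contained in a single curve, so it sweeps out a $2$-dimensional locus. Combined with the expected dimension of $M_i$, this implies the locus $B_i := B \cap M_i$ of stable maps through $e-2$ general points is nonempty and of dimension $1$.

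Finally I would conclude by contradiction: suppose $\overline{M}^{bir}(S,\beta)$ has two distinct irreducible components $M_i$ and $M_j$. By Proposition \ref{connected-curve}, $B = \bigcup_\ell B_\ell$ is connected, so some $B_i$ and $B_j$ meet at a point $[f]$. At $[f]$, the moduli space $\overline{M}_{0,0}(S,\beta)$ contains two distinct components passing through it, so it is not smooth at $[f]$; this contradicts Lemma \ref{curve-general}, which asserts $B$ lies in the smooth locus. Hence there is only one component. The main obstacle I expect is the bookkeeping around dominance and nonemptiness of each $B_i$: one has to be careful that the $(e-2)$-pointed evaluation from $M_i$ really is dominant onto $S^{e-2}$, which uses both expected dimension and the fact that each $M_i$ is dominant in $S$; once these foundational points are confirmed, the connectivity-smoothness argument closes the proof cleanly.
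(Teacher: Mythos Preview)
Your proposal is correct and follows essentially the same approach as the paper: show that the $1$-dimensional locus $B$ through $e-2$ general points meets every component of $\overline{M}^{bir}(S,\beta)$, then combine the connectedness of $B$ (Proposition \ref{connected-curve}) with the smoothness of $\overline{M}_{0,0}(S,\beta)$ along $B$ (Lemma \ref{curve-general}) to force irreducibility. The paper's proof is terser---it simply cites Lemma \ref{curve-general} for the fact that every component contains a $1$-parameter family through the points---whereas you unpack this by explicitly establishing expected dimension and dominance of each component; this is a reasonable elaboration rather than a different route.
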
 

\begin{proof}
Suppose that $\overline{M}^{bir}(S, \beta)$ is non-empty.
Let $e = -K_S \cdot \beta$, and pick $e-2$ general points $q_1, \dots, q_{e-2}$ in $S$. Lemma \ref{curve-general} shows that in every component of $\overline{M}^{bir}(S, \beta)$ there is a $1$-parameter family of curves parametrizing curves through $q_1, \dots, q_{e-2}$.  The union $B$ of all such $1$-parameter families is connected by Proposition \ref{connected-curve}.  Suppose that $\overline{M}^{bir}(S, \beta)$ is reducible and let $M_{1},\ldots,M_{k}$ denote the irreducible components.  Since $B$ is connected and $M_{i} \cap B \neq \emptyset$ for every component $M_{i}$, we see that there must be a point $b \in B$ which is contained in two different irreducible components.  In particular, $b$ must be a singular point of $\overline{M}_{0,0}(S, \beta)$.  But this is not possible by Lemma \ref{curve-general}.
\end{proof}

If $\beta$ is a multiple of a $-K_{S}$-conic, then it is easy to see that $\overline{M}_{0,0}(\beta)$ will admit a component that generically parametrizes multiple covers of the corresponding conic fibration.  Altogether we have:

\begin{theo} \label{theo:dp1char0classification}
Let $S$ be a smooth del Pezzo surface of degree $1$ over an algebraically closed field of characteristic $0$.  Let $\beta$ be a nef class on $S$ satisfying $-K_S \cdot \beta \geq 3$.  Then:
\begin{enumerate}
\item If $\beta$ is not a multiple of a $-K_{S}$-conic, then there is a unique component $M$ of $\overline{M}_{0,0}(S,\beta)$ generically parametrizing stable maps with irreducible domains.  The general map parametrized by $M$ is a birational map onto a free curve.
\item If $\beta$ is a multiple of a smooth rational conic, then there is a unique component of $\overline{M}_{0,0}(S,\beta)$ generically parametrizing stable maps with irreducible domains.  The general map parametrized by $M$ is a finite cover of a smooth conic.
\item If there is a contraction of a $(-1)$-curve $\phi: S \to S'$ such that $\beta$ is the pullback of $-K_{S'}$, then there are exactly two components of $\overline{M}_{0,0}(S,\beta)$ parametrizing stable maps with irreducible domains.  One component generically parametrizes birational maps onto free curves, the other generically parametrizes multiple covers of conics.
\item If $\beta$ is a multiple of $-2K_{S}$ then there are at least two components of $\overline{M}_{0,0}(S,\beta)$ parametrizing stable maps with irreducible domains.  There is a unique component generically parametrizes birational maps onto free curves, and the others generically parametrize multiple covers of conics.
\end{enumerate}
\end{theo}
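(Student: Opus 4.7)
The plan is to dispose of the four cases in turn; case (1) is the main content, while cases (2)--(4) amount to explicit classifications of multiple cover components.

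For case (1), combine Theorem \ref{c-irreducible} (irreducibility of $\overline{M}^{bir}(S,\beta)$) with \cite[Proposition 4.6]{Testathesis} (non-emptiness of $\overline{M}^{bir}(S,\beta)$ when $\beta$ is nef and not a multiple of a $-K_S$-conic). The general map parametrized by $\overline{M}^{bir}(S,\beta)$ is birational onto a free curve by the classical characteristic $0$ analog of Proposition \ref{prop: free}. To see that $\overline{M}^{bir}(S,\beta)$ is the unique component of $\overline{M}_{0,0}(S,\beta)$ whose generic point has irreducible domain, suppose $N$ were another such component; its generic point would parametrize an $m$-fold cover of a rational curve of class $\gamma = \beta/m$ for some $m \geq 2$. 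Since $\beta$ is nef so is $\gamma$, which rules out $\gamma$ being a $(-1)$-curve class; the hypothesis on $\beta$ rules out $\gamma$ being a $-K_S$-conic class. Hence $-K_S \cdot \gamma \geq 3$, and inductively $\dim N = (-K_S \cdot \gamma - 1) + (2m - 2) < -K_S \cdot \beta - 1$. But every irreducible component of $\overline{M}_{0,0}(S,\beta)$ has dimension at least the expected value $-K_S \cdot \beta - 1$ by the standard lower bound from the perfect obstruction theory of the Kontsevich space, so no such $N$ exists.

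For case (2), write $\beta = m\gamma$ where $\gamma$ is the fiber class of a conic fibration. Any irreducible curve of class $m\gamma$ has zero intersection with $\gamma$ and so lies inside a single fiber; but fibers have class $\gamma$, forcing $m=1$ (which contradicts $-K_S \cdot \beta \geq 3$). So $\overline{M}^{bir}(S,\beta)$ is empty, and the only component generically parametrizing irreducible-domain stable maps is the $m$-fold cover component, of dimension $2m - 1$; uniqueness follows from the same dimension argument as in case (1). For cases (3) and (4), interpret the statement as $\beta = n\gamma$ with $\gamma = -\phi^*K_{S'}$ or $\gamma = -2K_S$ respectively, both being $-K_S$-conic classes of positive self-intersection. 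The birational component $\overline{M}^{bir}(S,\beta)$ is irreducible by Theorem \ref{c-irreducible}, and non-empty by smoothing a chain of $n$ rational members of $|\gamma|$ inside the Kontsevich space: in case (3) such rational members are obtained by pulling back singular members of the anticanonical pencil on the degree $2$ del Pezzo $S'$, and in case (4) from the one-parameter family provided by Lemma \ref{lemm:degree2curves}. The multiple cover component parametrizing $n$-fold covers of such rational members has dimension $1 + (2n-2) = 2n - 1$, matching the birational component, so the two are distinct components. In case (3) the family of rational members is irreducible (pulled back from a single family on $S'$), giving exactly two components in total; in case (4) the rational members of $|-2K_S|$ may form several irreducible families, yielding at least two components.

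The main obstacle is the dimension-based uniqueness argument in case (1), which crucially relies on every irreducible component of the Kontsevich moduli space having dimension bounded below by the expected dimension; this is precisely the input that converts the numerical inequality for multiple cover loci into a genuine exclusion of extra components. A secondary difficulty is the non-emptiness of the birational component in cases (3) and (4), where \cite[Proposition 4.6]{Testathesis} does not apply and an explicit smoothing construction from a chain of $\gamma$-curves is required.
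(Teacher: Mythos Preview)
Your overall strategy matches the paper's: combine Theorem~\ref{c-irreducible} with \cite[Proposition~4.6]{Testathesis} to handle the birational component, eliminate other irreducible-domain components by comparing the dimension of a multiple-cover locus against the expected-dimension lower bound, and in cases (3)--(4) produce the birational component by gluing and smoothing a chain of free rational conics of class $\gamma$.

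One step in your case~(1) does not go through as written. After excluding $(-1)$-curve classes (by nefness) and $-K_S$-conic classes (by hypothesis) for the image class $\gamma$, you conclude ``Hence $-K_S\cdot\gamma\geq 3$''; but on a degree-$1$ surface $\gamma=-K_S$ is nef of anticanonical degree $1$ and is represented by the finitely many rational members of $|-K_S|$ (Lemma~\ref{lemm:lines}), so this possibility is not covered. For a nodal rational $C\in|-K_S|$ one computes $\nu^*T_S\cong\mathcal{O}(2)\oplus\mathcal{O}(-1)$, whence the $b$-fold covers of $C$ form a $(2b-2)$-dimensional locus whose tangent space in $\overline{M}_{0,0}(S,b(-K_S))$ also has dimension $2b-2$; these appear to be genuine additional irreducible-domain components, and your dimension inequality indeed reverses here. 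The paper's proof has the same feature---its inequality $(m-1)+(2b-2)\geq mb-1$ only yields $m\leq 2$, and $m=1$ is not separately excluded---so you have faithfully reproduced its argument, gap included; the same issue recurs in case~(4), where $\beta=2n(-K_S)$.
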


\begin{proof}
By \cite[Proposition 4.6]{Testathesis} there exists a free curve of class $\beta$, and thus a component $M \subset \overline{M}_{0,0}(S,\beta)$ generically parametrizing free curves.

Suppose that the general curve parametrized by $M$ is not birational onto its image.  If we let $m$ denote the anticanonical degree of the (reduced) image and $b$ the degree of the general map parametrized by $M$, then
\begin{equation*}
\dim(M) = (m-1) + (2b-2).
\end{equation*}
Since we also know that $M$ has at least the expected dimension $mb-1$, we deduce that $m=2$.  Thus if $M$ generically parametrizes non-birational maps, the images of these maps must be conics.  Conversely, since every nef class $\alpha$ satisfying $-K_{S} \cdot \alpha = 2$ is represented by a free conic, multiple covers of conics will always yield a component of $\overline{M}_{0,,0}(S,m\alpha)$.  Note that if $\alpha$ is the class of a smooth rational conic, then the moduli space of conics of class $\alpha$ is irreducible.  Similarly, if $\alpha$ is the pullback of the anticanonical divisor on a degree $2$ del Pezzo under a birational map, then the moduli space of conics of class $\alpha$ is the dual curve of the branch divisor for the induced map to $\mathbb{P}^{2}$ and thus must be irreducible. 

It only remains to analyze the case when $M$ generically parametrizes birational maps.  By Theorem \ref{c-irreducible} we know that $\overline{M}^{bir}(S,\beta)$ is either irreducible or empty.  We also know that $\beta$ is represented by a stable map with an irreducible domain by \cite[Proposition 4.6]{Testathesis}.  Thus we obtain the desired property if $\beta$ is not a multiple of a conic.  Since a smooth rational conic is a fiber of a morphism to $\mathbb{P}^{1}$, it is clear that a multiple of a smooth rational conic is not represented by any irreducible rational curves.  If $\beta$ is pulled back from a degree $2$ del Pezzo surface $S'$, then by gluing free curves representing $|-K_{S'}|$, smoothing, and taking a strict transform we find an irreducible rational curve of class $\beta$.  If $\beta$ is a multiple of $|-2K_{S}|$, then by gluing free curves in $|-2K_{S}|$ and smoothing we see that $\beta$ is represented by an irreducible rational curve. 
\end{proof}

\section{Irreducibility of moduli spaces in characteristic $p$} \label{sect:irrcharp}

Let $S$ be a del Pezzo surface defined over an algebraically closed field $k$ of characteristic $p$.
Denote by $\overline{M}^{bir}(S,\beta)$ the closure of the locus in the Kontsevich space $\overline{M}_{0,0}(S,\beta)$ 
parametrizing generically birational maps with irreducible domains.  As in the previous section, our goal is to show that $\overline{M}^{bir}(S,\beta)$ is irreducible under suitable hypotheses.  Our strategy is to deform to characteristic $0$.

\subsection{Existence of stable maps with irreducible domains}

We first need to show the existence of stable maps with irreducible domains which map birationally onto their image.  We will mimic the approach of \cite{Testathesis}.  The first step is:

\begin{lemm}[\cite{Testathesis} Corollary 2.5] \label{lemm:nefdecomposition}
Let $S$ be a del Pezzo surface of degree $d \leq 8$ over an algebraically closed field.  Let $D$ be a nef Cartier divisor on $X$.  Then there is a sequence of contractions of $(-1)$-curves
\begin{equation*}
S = Y_{d} \to Y_{d+1} \to \ldots \to Y_{8},
\end{equation*}
non-negative integers $n_{d},n_{d+1},\ldots,n_{7}$, and a nef divisor $D'$ on $Y_{8}$ such that
\begin{equation*}
D = n_{d}(-K_{Y_{d}}) + n_{d+1}\phi_{d+1}^{*}(-K_{Y_{d+1}}) + \ldots + n_{7} \phi_{7}^{*}(-K_{Y_{7}}) + \phi_{8}^{*}D'
\end{equation*}
where $\phi_{d+i}: Y_{d} \to Y_{d+i}$ is the composition of the birational maps in the above sequence.
\end{lemm}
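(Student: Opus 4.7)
The plan is to prove the lemma by induction on $d$, peeling off a suitable non-negative multiple of $-K_{Y_d}$ and then contracting a $(-1)$-curve on which the residue is trivial, until reaching degree $8$. The base case $d=8$ is immediate: the sum in the asserted decomposition is empty, so one sets $D' = D$, which is nef by hypothesis.

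For $d \leq 7$, since $-K_{Y_d}$ is ample and $D$ is nef, the intersection $(D + nK_{Y_d}) \cdot C$ eventually becomes negative against any fixed curve $C$, whereas $D$ itself (the case $n=0$) is nef. Hence there is a largest integer $n_d \geq 0$ for which $D + n_d K_{Y_d}$ is nef. By definition $D + (n_d + 1) K_{Y_d}$ is not nef, so some irreducible curve intersects it negatively; because the Mori cone of a smooth del Pezzo surface of degree $\leq 7$ is generated by its (finitely many) $(-1)$-curves, one may take this curve to be a $(-1)$-curve $E$. The integer $(D + n_d K_{Y_d}) \cdot E$ is then non-negative but becomes negative upon adding $K_{Y_d} \cdot E = -1$, so it must equal $0$. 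Let $\phi_{d+1}: Y_d \to Y_{d+1}$ be the contraction of $E$; then $Y_{d+1}$ is again a smooth del Pezzo surface, now of degree $d+1$, and since the restriction of $D + n_d K_{Y_d}$ to $E \cong \mathbb{P}^{1}$ has degree $0$, standard descent of line bundles along a smooth blow-down produces a Cartier divisor $D''$ on $Y_{d+1}$ with $\phi_{d+1}^{*} D'' = D + n_d K_{Y_d}$. This $D''$ is nef, since every irreducible curve $C' \subset Y_{d+1}$ is the image of its strict transform $\widetilde{C'} \subset Y_d$ and $D'' \cdot C' = (D + n_d K_{Y_d}) \cdot \widetilde{C'} \geq 0$. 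Applying the inductive hypothesis to $D''$ on $Y_{d+1}$, then pulling back along $\phi_{d+1}$ and adding $n_d (-K_{Y_d})$, produces the required decomposition.

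The delicate ingredient is the claim that the Mori cone of a smooth del Pezzo surface of degree $\leq 7$ is generated by $(-1)$-curves. This follows from Mori's cone theorem for surfaces (valid in arbitrary characteristic by the references cited in the preliminaries) together with the observation that on a smooth surface with $-K$ ample, an adjunction computation forces any extremal curve of negative self-intersection to be a $(-1)$-curve, while extremal curves with non-negative self-intersection do not occur on del Pezzos of degree $\leq 7$ (as can be checked directly from the classical list of $(-1)$-curves, which already span the effective cone in each Picard rank at least $3$).
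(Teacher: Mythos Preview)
Your proof is correct. The paper itself does not spell out an argument: it simply observes that Testa's original proof in characteristic $0$ uses only the combinatorics of the blow-up description of del Pezzo surfaces, and that this description is identical in every characteristic. Your explicit induction --- taking the largest $n_d$ with $D + n_d K_{Y_d}$ nef, locating a $(-1)$-curve on which the residue vanishes, contracting it, and descending --- is the standard way to make that combinatorial argument precise, and is presumably what Testa does.

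One small remark on your closing paragraph: the statement that for degree $\leq 7$ the Mori cone is spanned by the $(-1)$-curves is indeed the key input, and it is characteristic-free precisely because the lattice $\mathrm{Pic}(S)$ together with its intersection form and the class $K_S$ is determined purely by the number of blown-up points. Your appeal to the cone theorem plus adjunction is fine, but you could equally well point directly to the classical enumeration of extremal rays in each Picard rank, which is a lattice computation and hence insensitive to the characteristic --- this is exactly the sense in which the paper says ``the proof only uses the combinatorics of these blow-ups.''
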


\begin{proof}
Recall that the description of del Pezzo surfaces as blow-ups of $\mathbb{P}^{2}$ is exactly the same in characteristic $p$ and characteristic $0$.  Since the proof of \cite[Corollary 2.5]{Testathesis} only uses the combinatorics of these blow-ups, the proof works equally well in any characteristic.
\end{proof}

\begin{prop}[\cite{Testathesis} Proposition 4.6] \label{prop:existfreecurve}
Let $S$ be a del Pezzo surface over an algebraically closed field. When the degree of $S$ is $1$ we assume that the characteristic of the ground field is not equal to $2$. 
We further assume that every dominant component of $\overline{M}_{0,0}(S)$ that generically parametrizes birational maps to rational curves of anticanonical degree $\leq 3$ is separable.   
 Then every nef class $\alpha$ is represented by a stable map $f: \mathbb{P}^{1} \to S$ which is a free curve.
\end{prop}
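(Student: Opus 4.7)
The plan is to adapt Testa's characteristic $0$ argument from \cite{Testathesis} Proposition 4.6, using the decomposition of Lemma \ref{lemm:nefdecomposition} to reduce to building-block classes, then gluing and smoothing to produce a free rational curve of the given class $\alpha$.

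First I would apply Lemma \ref{lemm:nefdecomposition} to write
\[
\alpha = \sum_{i=d}^{7} n_i \phi_i^*(-K_{Y_i}) + m\,\phi_8^* H,
\]
with non-negative integer coefficients, where each $\phi_i \colon S \to Y_i$ is the composite of the birational contractions in the chain and $H$ is the hyperplane class on $Y_8 = \mathbb{P}^2$. The proof then reduces to two sub-tasks: (i) exhibit a free rational curve on $S$ representing each basic class $\phi_8^* H$ and $\phi_i^*(-K_{Y_i})$ for $d \leq i \leq 7$, and (ii) show that if two classes $\beta_1, \beta_2$ are each represented by a free rational curve, so is $\beta_1 + \beta_2$.

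For (ii), a standard gluing-and-smoothing argument works. Given free $f_1, f_2 \colon \mathbb{P}^1 \to S$ of classes $\beta_1, \beta_2$, freeness lets each curve move through a general point of $S$, so one can choose representatives meeting transversally at a single point. The resulting stable map $f \colon Z \to S$ from a nodal union of two $\mathbb{P}^1$'s has class $\beta_1 + \beta_2$; Theorem \ref{thm-normalBundleNodalCurve} shows that $N_{f/S}$ restricted to each component is globally generated with its free degree increased by one, and Proposition \ref{prop-vanH1NodalCurve} then gives $H^1(Z, N_{f/S}) = 0$. Deformation theory of stable maps smooths $f$ to an irreducible $f' \colon \mathbb{P}^1 \to S$ of class $\beta_1 + \beta_2$, and upper semicontinuity of cohomology keeps $f'$ free. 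For (i), the class $\phi_8^* H$ is realized by the strict transform of a general line in $\mathbb{P}^2$ avoiding the images of the $\phi_8$-exceptional divisors. For $\phi_i^*(-K_{Y_i})$ with $i \geq 2$, I would first produce a free rational curve of class $-K_{Y_i}$ on $Y_i$: a nodal rational curve in $|-K_{Y_i}|$ has normal sheaf of degree $i - 2 \geq 0$ and is free, and the existence of such a curve can be obtained either by degenerating the generic elliptic member of the pencil or inductively by (ii) from free lower-degree curves on $Y_i$. Pulling back a curve in general position by $\phi_i$ then yields the required free representative on $S$.

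The main obstacle is controlling the low-degree intermediate surfaces, particularly $Y_i$ with $i \leq 3$ and low characteristic, where the family of rational curves in $|-K_{Y_i}|$ can fail to be separable or can consist of cuspidal curves with pathological normal sheaves. This is precisely where the hypothesis that every dominant component of $\overline{M}_{0,0}(S)$ of anticanonical degree $\leq 3$ is separable, together with the characteristic restriction when $\deg S = 1$, is used: combined with Proposition \ref{prop: free} and the classifications in Sections \ref{sect:expdimlowdegree}--\ref{sect:lowdegree} (together with Theorem \ref{theo:separablefamlowdegree} on intermediate surfaces), these hypotheses rule out pathological families and supply the remaining building-block free curves. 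Because each $Y_i$ for $i \geq 2$ is a del Pezzo of degree $i \geq 2$ and inherits analogous separability via the chain of contractions, the induction closes and gives a free rational curve on $S$ of the prescribed class $\alpha$.
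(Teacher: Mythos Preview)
Your overall strategy---decomposing $\alpha$ via Lemma~\ref{lemm:nefdecomposition}, producing free representatives for each building-block class, and gluing/smoothing---is the paper's approach, and your treatment of step (ii) and of the blocks $\phi_i^*(-K_{Y_i})$ for $i \geq 2$ is essentially correct (modulo the minor slip that $Y_8$ need not be $\mathbb{P}^2$; it can also be $\mathbb{P}^1 \times \mathbb{P}^1$ or $\mathbb{F}_1$, but in either case a nef $D'$ is easily represented by a free rational curve).

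The genuine gap is the case $d = 1$ with $n_1 > 0$, which you never address: your discussion of building blocks only covers $i \geq 2$. Here the block is $-K_S$ itself, of anticanonical degree $1$, and a rational curve $C \in |-K_S|$ has $\deg N_{f/S} = -1$, so it is \emph{never} free. Thus your plan ``exhibit a free representative of each basic class and glue'' simply fails at this summand. The paper deals with this in two steps. If $n_1 \geq 2$, one writes $n_1 = 2m_1 + 3m_2$ with $m_1, m_2 \geq 0$ and replaces the $n_1$ copies of $-K_S$ by $m_1$ free curves in $|-2K_S|$ and $m_2$ free curves in $|-3K_S|$; these exist because the separability hypothesis applies to dominant families of anticanonical degree $2$ and $3$. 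If $n_1 = 1$, one uses that every remaining summand $\phi_i^*(-K_{Y_i})$ for $i \geq 2$ and $\phi_8^*D'$ can be written as a positive sum of at least two $(-1)$-curves, each of which meets $C$ transversally (since $E \cdot C = 1$). One then forms a comb with handle $C$ and these $(-1)$-curves as teeth. The hypothesis $p \neq 2$ enters exactly here: it guarantees $N_{f/S}$ for the handle is $\mathcal{O}(-1)$ or $\mathcal{O}(-2)\oplus k(p)$ rather than $\mathcal{O}(-3)\oplus k[t]/(t^2)$, so that after attaching at least two teeth Theorem~\ref{thm-normalBundleNodalCurve} makes the restriction to the handle have nonnegative free part, Proposition~\ref{prop-vanH1NodalCurve} gives $H^1 = 0$, and the comb smooths to an irreducible free curve. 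Your proposal never invokes the $p \neq 2$ assumption, which should have signalled that this case was missing.
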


\begin{proof}
First we must address the nef classes $\alpha$ satisfying $-K_{S} \cdot \alpha = 1$.  By the Hodge Index Theorem this can only occur when $S$ has degree $1$ and $\alpha = -K_{S}$.  Note that $\mathbb{P}^{2}$ contains a cubic rational curve through any 8 points.  Since $-K_{S}$ is primitive, by taking a strict transform we see that $\alpha$ is represented by an irreducible rational curve.

The more interesting case is when $-K_{S} \cdot \alpha \geq 4$.  As in Lemma \ref{lemm:nefdecomposition} we can write
\begin{equation*}
D = n_{d}(-K_{Y_{d}}) + n_{d+1}\phi_{d+1}^{*}(-K_{Y_{d+1}}) + \ldots + n_{7} \phi_{7}^{*}(-K_{Y_{7}}) + \phi_{8}^{*}D'
\end{equation*}
where each $Y_{i}$ is a del Pezzo surface of degree $i$.

We claim that if $i \geq 4$ then $|-K_{Y_{i}}|$ is represented by a free rational curve. The existence of an irreducible rational curve in the anticanonical linear system follows from the fact that these are the strict transforms of plane cubics passing through the points we blow up.  A general member is free by Proposition~\ref{prop: free}. 

Note that by assumption $|-K_{Y_{2}}|$ and $|-K_{Y_{3}}|$ are also represented by free rational curves (if $d \leq 3$).

We now construct a chain of rational curves representing $D$.  If $d \geq 2$, we construct the chain by taking $n_{d}$ general free curves in $|-K_{Y_{d}}|$, then connecting it to a chain of $n_{d+1}$ general free curves in $\phi_{d+1}^{*}|-K_{Y_{d+1}}|$, and so on until we reach $Y_{7}$.  Since a del Pezzo surface of degree $8$ is either $\mathbb{P}^{1} \times \mathbb{P}^{1}$ or the blow-up of $\mathbb{P}^{2}$ at a point, it is also clear that $D'$ is represented by a free rational curve.  Altogether, if $d \geq 2$ then $D$ is represented by a stable map which maps birationally onto a chain of free rational curves.  By smoothing we obtain a stable map with irreducible domain mapping to a free curve.

When $d=1$ and $n_1 = 0$ the argument is similar.  If $n_{1} \geq 2$, then we can write $n_{1} = 2m_{1} + 3m_{2}$ for some non-negative integers $m_{1},m_{2}$.  By assumption $|-2K_{S}|$ and $|-3K_{S}|$ are both represented by free rational curves, and we conclude by a similar argument as before.
Finally, if $n_{1} = 1$, $C \in |-K_{Y_1}|$ has the normal sheaf $\mathcal O(-1)$ or $\mathcal O(-2)\oplus k(p)$.  (Recall that we are assuming that the characteristic $\neq 2$ so that $\mathcal{O}(-3) \oplus k[t]/(t^2)$ is not possible.)
Note that each $-\phi_{i}^{*}K_{Y_{i}}$ for $i > 1$ and $\phi_{8}^{*}D'$ can be expressed as a positive sum of at least two $(-1)$-curves.  Thus we may represent the class $D$ as a comb whose handle is $C$ and whose teeth are a collection of $(-1)$-curves $E_j$.  Since $E_j \cdot C = 1$, the $(-1)$-curves meet $C$  transversally.  Since there are at least two $(-1)$-curves, Theorem~\ref{thm-normalBundleNodalCurve} shows that the resulting comb is a smooth point of the moduli space.  We can smooth it so that we obtain an irreducible free curve, proving the claim.
\end{proof}

\subsection{Deforming to characteristic $0$}

\begin{lemm}\label{curve-general_p}
Suppose that $S$ is a smooth del Pezzo surface of degree $d$ defined over an algebraically closed field $k$ of characteristic $p$.
Assume that $p \geq \delta(d)$. When $d = 2$ and $p = 3$,  we further assume that $S$ is not isomorphic to the surface listed in Theorem~\ref{theo:separablefamlowdegree}.(2).

Let $\beta \in N_1(S)_\bZ$ be a nef class such that $e:=-K_S \cdot \beta \geq 3$ and $\overline{M}^{bir}(S,\beta)$ is non-empty. Let  $q_1, \dots, q_{e-2}$ be general points in $S$ and let 
$B$ be the locus in $\overline{M}^{bir}(S,\beta)$ parametrizing stable maps whose images pass through $q_1, \dots, q_{e-2}$. 
Then $B$ is of dimension 1 and lies in the smooth locus of $\overline{M}_{0,0}(S,\beta)$. Furthermore only 
finitely many maps parametrized by $B$ have reducible domains.
\end{lemm}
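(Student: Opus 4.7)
The plan is to mirror the characteristic zero argument of Lemma \ref{curve-general} verbatim, with Theorem \ref{theo:separabilitylargechar} and Proposition \ref{prop: free} supplying the generic freeness that was automatic in characteristic zero. Under our hypotheses on $p$, Theorem \ref{theo:separabilitylargechar} ensures that every dominant component of $\overline{M}_{0,0}(S)$ parametrizing curves of anticanonical degree $\leq 3$ is separable with generic free member, and then Propositions \ref{prop:expecteddim} and \ref{prop: free} extend this to all dominant components of $\overline{M}_{0,0}(S)$: each has the expected dimension and generically parametrizes free curves. Any component $M$ of $\overline{M}^{bir}(S,\beta)$ containing a stable map through $e-2$ general points must be dominant (non-dominant components parametrize curves confined to a proper subvariety of $S$ and so miss general points), hence $\dim M = e-1$ and the stratum $B \cap M$ is $1$-dimensional.

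For $[f] \in B$ with irreducible domain, a generic choice of the $q_i$ forces $f$ to be a generic member of $M$, therefore free; the short exact sequence $0 \to T_Z \to f^*T_S \to N_f \to 0$ combined with $H^1(Z, f^*T_S) = 0$ then gives $H^1(Z, N_f) = 0$, so $[f]$ is a smooth point of the moduli space. For $[f] \in B$ with reducible domain, the dimension count from Lemma \ref{curve-general} carries over unchanged — a free component $f_i$ contributes at most $e_i - 1$ general points, a non-free one contributes none — and forces the same dichotomy: either (a) $m = 2$ with both $f_1, f_2$ free, or (b) $m = 2$, $f_1$ free of anticanonical degree $e-1$, and $f_2$ a non-free $-K_S$-line (necessarily a $(-1)$-curve, since the degree-$1$ quasi-elliptic pencil of $-K_S$-lines is ruled out by $p \geq 11$). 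In either case the general-point conditions plus the finiteness of $-K_S$-lines determine $(f_1, f_2)$ up to finitely many choices, giving finitely many reducible maps in $B$.

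Smoothness at each reducible $[f] \in B$ will reduce, via Theorem \ref{thm-normalBundleNodalCurve} and Proposition \ref{prop-vanH1NodalCurve}, to showing that $f$ is a local immersion at the node; concretely that $f_1(Z_1)$ and $f_2(Z_2)$ meet transversally at the node-image and that neither $f_i$ is ramified there. The argument for this is a genericity one: tangency to a fixed curve is a codimension-$\geq 1$ condition on a positive-dimensional free family, cut out by finitely many constraints as the node-image varies over the finite intersection, so a general choice of the $q_i$ avoids non-transverse pairs. In the delicate degree-$1$ case the relevant transversality against $-K_S$-lines and other $-K_S$-conics is supplied by Lemma \ref{lemm: line_conic} and Lemma \ref{lemm: conic_conic}, and an entirely analogous intersection-number bound combined with $p \geq 11$ handles higher-degree free $f_1$. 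Once transversality is in place, the free part of $N_f|_{Z_1}$ has degree $\geq e_1 - 1 \geq 1$, hence is globally generated with $H^1 = 0$, and the free part of $N_f|_{Z_2}$ has degree $\geq -1$, hence also has $H^1 = 0$ on $\mathbb{P}^1$; Proposition \ref{prop-vanH1NodalCurve} then yields $H^1(Z, N_f) = 0$ as required.

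The principal new obstacle compared to Lemma \ref{curve-general} is the transversality step: in characteristic zero \cite[Proposition 2.8]{BLRT20} gives it essentially for free, whereas here we must rule out the pathological possibility that every member of the free family of $f_1$'s meets the fixed $f_2$ tangentially, which is where the bounds $p \geq \delta(d)$ (and Lemmas \ref{lemm: line_conic}–\ref{lemm: conic_conic}) are used. All cohomological and dimension-counting steps are insensitive to characteristic once generic freeness of dominant components is secured by Theorem \ref{theo:separabilitylargechar} and Proposition \ref{prop: free}.
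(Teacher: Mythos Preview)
Your proposal has a genuine gap in the treatment of irreducible $[f] \in B$. You write ``a generic choice of the $q_i$ forces $f$ to be a generic member of $M$, therefore free,'' but this is the characteristic-zero argument and it does not transfer. In characteristic $0$ any irreducible rational curve through a single general point is free (by generic smoothness of the evaluation map), so every irreducible $[f]\in B$ is automatically free. In characteristic $p$ that fails: the non-free locus in $M$ is a proper closed subset by Proposition~\ref{prop: free}, but it could have codimension $1$ and still dominate $S$, so the $1$-dimensional locus $B$ could meet it even for general $q_i$. The paper closes this gap by an \emph{inductive} argument: assuming some irreducible $[f]\in B$ is singular in $\overline{M}_{0,0}(S,\beta)$, the singular locus has a codimension-$1$ component $V$ generically parametrizing irreducible birational maps; one then fixes $e-3$ general points, takes a $1$-dimensional locus in $V$ through them, and breaks it via Bend-and-Break into one of three explicit types of reducible maps, each of which is shown (using the induction hypothesis, Corollary~\ref{coro:separabilitylargechar}, and Lemmas~\ref{lemm: line_conic}--\ref{lemm: conic_conic}) to be a smooth point of $\overline{M}_{0,0}(S)$, a contradiction. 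Your proposal skips this entirely.

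A secondary error: in case (b) with $d=1$ you assert the $-K_S$-line $Z_2$ is ``necessarily a $(-1)$-curve.'' Ruling out the quasi-elliptic pencil (via $p\geq 11$) only eliminates a \emph{one-parameter family} of $-K_S$-lines; there remain finitely many isolated singular rational members of $|-K_S|$, which can be nodal or cuspidal. The paper treats these cases separately, computing $N_{f_2}=\mathcal O(-1)$ or $\mathcal O(-2)\oplus k(p)$ respectively and checking $H^1(Z,N_f)=0$ in each.
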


\begin{proof}
Proposition \ref{prop:expecteddim} shows that every component of $\overline{M}^{bir}(S,\beta)$ has the expected dimension.  Since general points will impose general conditions on a family of curves, we see that $B$ has dimension $1$.

We prove the remaining statements by induction on $e$.
When $e = 3$, let $f: Z \to S$ be a stable map of degree $3$ passing through a general point.
If $Z$ is irreducible, then $(Z,f)$ is a smooth point of the moduli space by Corollary~\ref{coro:separabilitylargechar}.
Assume that $Z$ is reducible. Then $Z$ consists of a $-K_S$-conic $Z_1$ and a $-K_S$-line $Z_2$ with $f_i = f|_{Z_i}$.
Assume that $d \geq 2$. Then the image of $Z_2$ is a $(-1)$-curve, so it is smooth. It follows from Proposition~\ref{prop-vanH1NodalCurve} that $(Z,f)$ is a smooth point of the moduli space.
If $d = 1$, then it follows from Lemma~\ref{lemm: line_conic} that the images of $Z_1$ and $Z_2$ meet transversally. Thus we conclude that $f$ is a local immersion in an open neighborhood of the node of $Z$. If the image of $Z_2$ is a $(-1)$-curve or a nodal curve, 
then $N_{f_1}=\mathcal O$ and $N_{f_2}= \mathcal O(-1)$, so $N_f|_{Z_1} = \mathcal O(1)$, and $N_f|_{Z_2}=\mathcal O$. If the image of $Z_2$ has a cusp, then 
$N_{f_1} = \mathcal O$ and $N_{f_2}= \mathcal O(-2) \oplus k(p)$ where $p$ is the point at which $f$ is ramified. Therefore, $N_f|_{Z_1}=\mathcal O(1)$, and 
$N_f|_{Z_2} = \mathcal O(-1) \oplus k(p)$.  In both cases $(Z,f)$ is a smooth point of the moduli space.

We now prove the induction step.  Choose $e \geq 4$ and assume our assertion is true for stable maps of anticanonical degree $< e$.
Let $(Z, f)$ be a stable map parametrized by $B$. If $Z$ is irreducible, then we claim that $(Z,f)$ is a smooth point of the moduli space. Suppose otherwise, so that the singular locus of $\overline{M}_{0,0}(S)$ meets the curve $B$ at a point representing a map with irreducible domain.  As we vary the choice of $e-2$ general points $q_{1},\ldots,q_{e-2}$, the curves $B$ define a flat family of subvarieties of $\overline{M}^{bir}(S,\beta)$.   Since a flat family of subvarieties will intersect any other subvariety in the expected dimension, there must be a component $V$ of the singular locus of $\overline{M}_{0,0}(S)$ which has codimension $1$ in $\overline{M}_{0,0}(S)$ and generically parametrizes birational stable maps with irreducible domains.  
Pick general points $q_1', \cdots, q_{e-3}'$ and consider a $1$-dimensional locus of $V$ generically parametrizing irreducible curves passing through $q_1', \cdots, q_{e-3}'$. 
Arguing as in Lemma~\ref{lemm:weakmbbfordpsurfaces}, $f$ breaks into a stable map with reducible domain, and there are the following possible types of breaking curves $(Z', g)$, where the $Z'_{i}$ denote the irreducible components of $Z'$:
\begin{enumerate}
\item $Z' = Z'_1 \cup Z'_2$ with $-K_S \cdot Z'_1 = d_1>2$, $-K_S \cdot Z'_2 = d_2 > 1$ such that $Z'_1$ contains $d_1-2$ general points and $Z'_2$ contains $d_2 -1$ general points or;
\item $Z' = Z'_1 \cup Z'_2 \cup Z'_3$ with  $-K_S \cdot Z'_1 = d_1>1$, $-K_S \cdot Z'_2 = d_2 > 1$, and $-K_S \cdot Z'_3 = d_3 > 1$ such that each $Z'_i$ contains $d_i-1$ general points, or;
\item $Z' = Z'_1 \cup Z'_2 \cup Z'_3$ with  $-K_S \cdot Z'_1 = d_1>1$, $-K_S \cdot Z'_2 = d_2 > 1$, and $-K_S \cdot Z'_3 = 1$ such that $Z'_1$ contains $d_1-1$ general points and $Z'_2$ contains $d_2 -1$ general points. 
\end{enumerate}
In the first case, the induction hypothesis shows that $Z'_1$ and $Z'_2$ are smooth points of the moduli space. This implies that $h^1(Z'_i, g^*T_S|_{Z'_i}) = 0$. Furthermore, $Z'_2$ is general in its moduli so it must be free. Thus we conclude that $h^1(Z', g^*T_S) = 0$. Then $(Z', g)$ is a smooth point of $\overline{M}_{0,0}(S)$, a contradiction.
In the second case, the $Z_i$'s are general in moduli so they are free. Thus $(Z', g)$ is a smooth point of moduli space, a contradiction.
In the third case, $Z'_1$ and $Z'_2$ must be free. 
When $d \geq 2$, $Z'_3$ is a smooth curve. Hence it follows from Proposition~\ref{prop-vanH1NodalCurve} that $(Z', g)$ is a smooth point of the moduli space.
Assume that $d = 1$. We claim that the images of $Z'_1, Z'_2, Z'_3$ meet transversally with each other. When the degree of $Z'_1$ is greater than $2$, then $Z'_1$ is very free by the induction hypothesis. Thus transversality of $Z'_1$ with $Z'_2$ and $Z'_3$ is clear. Similarly for $Z'_2$, so without loss of generality we may assume that $Z'_1$ and $Z'_2$ are $-K_S$-conics.  
Then transversality follows from Lemma~\ref{lemm: line_conic}. Arguing as above, we conclude that $(Z', g)$ is a smooth point of the moduli space, a contradiction. Altogether, for a general choice of $q_1, \cdots, q_{e-2}$ points of the form $(Z, f)$ in $B$ with $Z$ irreducible are smooth points of the moduli space.

%((
Now suppose $Z$ is reducible. 
Let $Z_1, \dots, Z_m$, $m \geq 2$ be the non-contracted  irreducible components of $Z$, and let $e_i = \deg f|_{Z_i}$ and $f_i=f|_{Z_i}.$ 
Suppose $f_1, \dots, f_k$ are the maps containing at least one of the general points and $f_{k+1}, \dots, f_m$ are the maps containing none of the general points. Then for $1 \leq i \leq k$ the image of $Z_i$ can pass through at most $e_{i}-1$ general points. So $e-2 \leq e_1 + \dots +e_k -k$. On the other hand $\sum_{i=1}^m e_i =e$, so there are two possibilities: either 1) $m=k=2$ or 2) $k=1, m=2, e_1= e-1, e_2=1$.  And in either case there cannot be a contracted component. In the first case, the image of $Z_i$, $i=1,2$, has to pass through $e_i-1$ of the points, and so there are finitely many choices for each $f_i$. Since 
the images of $f_1$ and $f_2$ pass through the maximum number of general points, they are free and so $(Z,f)$ is a smooth point of the moduli space.

In the second case, the image of $Z_2$ is a $-K_S$-line, and the image of $Z_1$ passes through $q_1, \dots, q_{e-2}$, so $f_1$ is general in its moduli and $N_{f_1}=\mathcal O(e-3)$. 
If $d \geq 2$, then $(Z, f)$ is a smooth point of the moduli space as above.
Suppose that $d = 1$. Since there are finitely many lines on $S$ the images of $Z_1$ and $Z_2$ meet transversally.
Indeed, assume to the contrary that $Z_1$ is tangent to $Z_2$. Since $Z_1$ is general in its moduli, this is only possible when $Z_1$ is a $-K_S$-conic. However this contradicts with our assumption on $\mathrm{char}(k)$ and Lemma~\ref{lemm: line_conic}.

Thus we conclude that $f$ is a local immersion in an open neighborhood of the node of $Z$. If the image of $Z_2$ is a $(-1)$-curve or a nodal curve, 
then $N_{f_1}=\mathcal O(e-3)$ and $N_{f_2}= \mathcal O(-1)$, so $N_f|_{Z_1} = \mathcal O(e-2)$, and $N_f|_{Z_2}=\mathcal O$. If the image of $Z_2$ has a cusp, then 
$N_{f_1} = \mathcal O(e-3)$ and $N_{f_2}= \mathcal O(-2) \oplus k(p)$ where $p$ is the point at which $f$ is ramified. Therefore, $N_f|_{Z_1}=\mathcal O(e-2)$, and 
$N_f|_{Z_2} = \mathcal O(-1) \oplus k(p)$.  In both cases $(Z,f)$ is a smooth point of the moduli space. 
\end{proof}

\begin{theo} \label{theo:charpbirorempty}
Let $S$ be a smooth del Pezzo surface of degree $d$ over an algebraically closed field $k$ of characteristic $p$, and let $\beta$ be a nef curve class of anti-canonical degree $e \geq 3$.  We assume that $p \geq \delta(d)$. When $d = 2$ and $p = 3$, we further assume that $S$ is not isomorphic to the surface listed in Theorem~\ref{theo:separablefamlowdegree}.(2).
Then $\overline{M}^{bir}(S,\beta)$ is irreducible or empty. 
\end{theo}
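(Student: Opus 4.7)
The plan is to follow the skeleton of Theorem~\ref{c-irreducible}: assuming $\overline{M}^{bir}(S,\beta)$ is non-empty, Propositions~\ref{prop:expecteddim} and \ref{prop: free} imply that every irreducible component of $\overline{M}^{bir}(S,\beta)$ has the expected dimension $e-1$ and generically parametrizes free curves, hence sweeps out $S^{e-2}$ under the evaluation map. Therefore, for $e-2$ general points $q_1,\dots,q_{e-2}$, the incidence locus $B \subset \overline{M}^{bir}(S,\beta)$ meets every such component in a $1$-dimensional subscheme, and by Lemma~\ref{curve-general_p} the whole of $B$ lies in the smooth locus of $\overline{M}_{0,0}(S,\beta)$. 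Consequently, if $B$ is connected then all these components must coincide: two distinct components would have to meet along $B$ by connectedness, but such a meeting point would be a singular point of $\overline{M}_{0,0}(S,\beta)$, contradicting Lemma~\ref{curve-general_p}. Thus the whole theorem reduces to showing $B$ is connected.

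For this I would lift to characteristic zero. Choose a complete DVR $R$ of mixed characteristic $(0,p)$ with residue field $k$, and let $\mathcal S \to \Spec R$ be a smooth projective lift of $S$; such a lift exists because del Pezzo surfaces, viewed as blow-ups of $\mathbb P^2$ or $\mathbb P^1\times\mathbb P^1$, have unobstructed deformations. The class $\beta$ lifts via the specialization isomorphism on Picard groups, and by Proposition~\ref{prop:existfreecurve} together with openness of freeness we obtain $\overline{M}^{bir}(\mathcal S_\eta,\beta_\eta)\neq\emptyset$. After possibly passing to an unramified extension of $R$, lift each $q_i$ to an $R$-section $\sigma_i:\Spec R \to \mathcal S$ whose generic fiber point is also general on $\mathcal S_\eta$ in the sense of Proposition~\ref{connected-curve}; this is possible by a standard spreading-out argument since genericity is open on each fiber. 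Set $\mathcal M := \overline{M}_{0,0}(\mathcal S/R,\beta)$ and let $\mathcal B \subset \mathcal M$ be the incidence subscheme cut out by the $\sigma_i$, which is proper over $\Spec R$ with special fiber $\mathcal B_s = B$ and generic fiber $\mathcal B_\eta$ connected by Proposition~\ref{connected-curve}.

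The crucial technical step is to show that $\mathcal M$ is smooth over $R$ along $B$. Lemma~\ref{curve-general_p} classifies the points $[f]$ of $B$: a general point is a free irreducible curve (for which relative smoothness is automatic), and the finitely many remaining points have reducible domains of the two explicit combinatorial types whose relevant $H^1$ obstruction is shown to vanish via Theorem~\ref{thm-normalBundleNodalCurve} and Proposition~\ref{prop-vanH1NodalCurve}. Since those normal-sheaf computations are compatible with flat base change, they remain valid when the target is the relative surface $\mathcal S/R$, yielding relative smoothness of $\mathcal M/R$ along $B$. This implies $\mathcal B$ is flat over $R$ near $B$, so every irreducible component of $B$ is the specialization of a component of $\mathcal B$ dominating $\Spec R$. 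Taking the Stein factorization $\mathcal B \to \Spec R' \to \Spec R$, the connectedness of $\mathcal B_\eta$ forces $R' \otimes_R K$ to be local; since every component of $B$ comes from a flat component of $\mathcal B$, the finite $R$-algebra $R'$ has no component supported only on the closed point, and therefore $R'\otimes_R k$ is also local, proving $B$ is connected. The main obstacle is this relative smoothness at the reducible-domain points of $B$: one must confirm that the normal-sheaf vanishing underlying Lemma~\ref{curve-general_p} can be run verbatim in the family $\mathcal S/R$, which is the point where the combinatorial case analysis of that lemma enters the argument. A subsidiary, more mechanical difficulty is arranging the compatible genericity of the sections $\sigma_i$ on both fibers.
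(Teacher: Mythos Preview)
Your proposal is correct and takes essentially the same route as the paper: lift $S$ to mixed characteristic, arrange compatibly general point-sections on both fibers, show the incidence locus over the generic fiber is connected via Proposition~\ref{connected-curve}, descend connectedness to the special fiber by Stein factorization over the normal base, and finish with the smoothness from Lemma~\ref{curve-general_p}. The one difference worth noting is that the paper replaces your relative-smoothness step by a direct dimension count---Koll\'ar's lower bound $\dim\widetilde M \geq 1 + \dim R$ combined with the fiberwise upper bound $\dim\widetilde M_t \leq 1$ from Lemma~\ref{curve-general_p}---to show every component of the incidence locus dominates $\Spec R$; this sidesteps what you flagged as the ``main obstacle,'' since it only requires the special-fiber smoothness already established in Lemma~\ref{curve-general_p} rather than rerunning those normal-sheaf computations over $R$.
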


\begin{proof}
Suppose  $\overline{M}^{bir}(S,\beta)$ is non-empty.  We may assume that $S$ is defined over a subfield $k' \subset k$ such that $k'$ is finitely generated over the prime field $\mathbb F_p$ and $N^1(S)_\bZ = N^1(S\otimes k)_\bZ$.  After replacing $k'$ by a finite extension inside $k$ we may assume that $S$ is $k'$-rational and in particular that $S(k')$ is Zariski dense in $S$.  After taking another finite extension of $k'$ inside $k$ if necessary, there is a normal complete local ring $R$ which is of finite type over $\mathbb Z_p$ with residue field 
$k'$ and generic point $\eta$ and a smooth surface $\mathcal S$ over $\Spec \, R$ such that $\mathcal  S \otimes_R k' = S$.  Let $F_1 \subset S^{e-2}$ be a proper closed subset containing all sets of $e-2$ closed points which fail to be general in the sense of Lemma~\ref{curve-general_p} when applied to $S_{\overline{k'}}$.
Next let $F_2 \subset (\mathcal S\otimes_R K(\eta))^{e-2}$ be a proper closed subset containing all sets of $e-2$ closed points which fail to be general in the sense of Lemma~\ref{curve-general_p} when applied to $\mathcal{S} \otimes_{R} \overline{K(\eta)}$.
We take the Zariski closure $\mathcal F_2 \subset \mathcal S \times_R \cdots \times_R \mathcal S$ of $F_2$.
We define $\mathcal U$ as the Zariski open subset of $\mathcal S \times_R \cdots \times_R \mathcal S$ which is the complement of $F_1 \cup \mathcal F_2$.

Choose points $q_1, \dots, q_{e-2}$ defined over $k'$ in $S$ whose product lies in $\mathcal{U}$.  Since $S$ is smooth, we may apply Hensel's lemma to find sections $\tilde q_1, \dots, \tilde q_{e-2}$ of $\mathcal S \to \Spec \, R$ such that $\tilde{q}_i \otimes_R k' = q_i$.  By construction the product of the $e-2$ points $\tilde{q}_i\otimes_R K(\eta)$'s in $\mathcal S\otimes_R K(\eta)$ is contained in $\mathcal U$.

We will write $\beta_{\mathcal{S}} \in N_1(\mathcal S/\Spec \, R)_{\bZ}$ for the image of $\beta$ under the pushforward $N_1(S)_\bZ \to N_1(\mathcal S/\Spec \, R)_\bZ$.   Let $\widetilde M$ be the locus in $\overline{M}_{0,0}(\mathcal S/\Spec \, R,\beta_{S})$ parametrizing stable maps $f$ whose images meet with the images of  $\tilde q_1, \dots, \tilde q_{e-2}$.  Then by \cite[II.1.7 Theorem]{Kollar} the dimension of $\widetilde M$ is greater than or equal to
\begin{equation*}
1 + \dim \, R.
\end{equation*}
Indeed, \cite[II.1.7 Theorem]{Kollar} implies that a component $N$ of $\overline{M}_{0,0}(\mathcal S/\Spec \, R,\beta_{\mathcal{S}})$ which contains a component of $\widetilde{M}$ has dimension greater than or equal to $e -1 + \dim \, R$. We consider a component $N^{(e-1)} \subset \overline{M}_{0,e-1}(\mathcal S/\Spec \, R,\beta_{\mathcal{S}})$ above $N$ and the evaluation map $\mathrm{ev}_{e-1}: N^{(e-1)} \to \mathcal{S}^{e-1}$. Then $\widetilde{M}$ is the preimage of the product of the images of $\tilde{q}_1, \cdots, \tilde{q}_{e-2}$.  We conclude that $\widetilde{M}$ has dimension greater than or equal to $1 + \dim \, R$.
On the other hand every fiber of $\widetilde M \to \Spec \, R$ has at most dimension $1$ because of Lemma~\ref{curve-general_p}. Altogether, we have shown that every component of $\widetilde{M}$ dominantly maps to $\Spec \, R$.

By Proposition \ref{connected-curve}, the geometric generic fiber of $\widetilde M \to \Spec \, R$, i.e., $\widetilde M \otimes_R \overline{K(\eta)}$, is connected.  Since $\Spec \, R$ is normal, the Stein factorization of  $\widetilde M \to \Spec \, R$ is trivial so that all the geometric fibers are connected.  In particular the geometric fiber of $\widetilde{M}$ over the closed point of $R$ is connected.  Lemma~\ref{curve-general_p} shows that every point of this fiber is contained in the smooth locus of $\overline{M}_{0,0}(\mathcal{S}_{\overline{k'}})$.  Then the same argument as in Theorem \ref{c-irreducible} shows that $\overline{M}^{bir}(\mathcal{S}_{\overline{k'}},\beta)$ is irreducible.
Since $\mathcal{S}_{\overline{k'}}$ is constructed from $S$ by a base change of the ground field, our assertion follows.
\end{proof}

\begin{theo}
Let $S$ be a smooth del Pezzo surface of degree $d$ over an algebraically closed field of characteristic $p$.  Assume that $p \geq \delta(d)$. When $d = 2$ and $p = 3$,  we further assume that $S$ is not isomorphic to the surface listed in Theorem~\ref{theo:separablefamlowdegree}.(2).

Let $\beta$ be a nef class on $S$ satisfying $-K_S \cdot \beta \geq 3$.  Then:
\begin{enumerate}
\item If $\beta$ is not a multiple of a $-K_{S}$-conic, then there is a unique component $M$ of $\overline{M}_{0,0}(S,\beta)$ generically parametrizing stable maps with irreducible domains.  The general map parametrized by $M$ is a birational map onto a free curve.
\item If $\beta$ is a multiple of a smooth rational conic, then there is a unique component $M$ of $\overline{M}_{0,0}(S,\beta)$ generically parametrizing stable maps with irreducible domains.  The general map parametrized by $M$ is a finite cover of a smooth conic.
\item If $d=2$ and $\beta$ is a multiple of $-K_{S}$, or if $d = 1$ and there is a contraction of a $(-1)$-curve $\phi: S \to S'$ such that $\beta$ is a multiple of the pullback of $-K_{S'}$, then there are exactly two components of $\overline{M}_{0,0}(S,\beta)$ parametrizing stable maps with irreducible domains.  One component generically parametrizes birational maps onto free curves, the other generically parametrizes multiple covers of $-K_S$-conics.
\item If $d=1$ and $\beta$ is a multiple of $-2K_{S}$ then there are at least two components of $\overline{M}_{0,0}(S,\beta)$ parametrizing stable maps with irreducible domains.  There is a unique component generically parametrizes birational maps onto free curves, and the others generically parametrize multiple covers of $-K_S$-conics.
\end{enumerate}
\end{theo}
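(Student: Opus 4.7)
The plan is to combine three ingredients already in hand: Theorem~\ref{theo:charpbirorempty} for irreducibility of the birational locus, Proposition~\ref{prop:existfreecurve} for existence of a birational free representative, and Lemma~\ref{lemm:degree2curvessep} to classify non-birational components. The hypothesis $p \geq \delta(d)$, together with the exclusion of the surface in Theorem~\ref{theo:maintheorem2}.(2), puts us within the scope of Theorem~\ref{theo:separablefamlowdegree} and Theorem~\ref{theo:separabilitylargechar}, so all the separability hypotheses needed below are in force; in particular Proposition~\ref{prop: free} applies to every dominant birational component.

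First I would invoke Theorem~\ref{theo:charpbirorempty} to conclude that $\overline{M}^{bir}(S,\beta)$ is irreducible or empty, giving at most one component generically parametrizing birational stable maps with irreducible domain; when such a component exists its general map is free by Proposition~\ref{prop: free}. Next, for any component $M$ generically parametrizing non-birational stable maps from an irreducible domain, suppose the general map has degree $b \geq 2$ onto a curve of anticanonical degree $m$. Then $\dim M = (m-1) + (2b-2)$, and comparison with the expected dimension $mb - 1$ forces $m = 2$. Hence $M$ parametrizes $b$-fold covers of a family of $-K_S$-conics, and by Lemma~\ref{lemm:degree2curvessep} the relevant conic families are fibers of a conic fibration, members of $|-K_S|$ when $d=2$, pullbacks of $|-K_{S'}|$ via a blow-down $\phi : S \to S'$ when $d=1$, or members of $|-2K_S|$ when $d=1$.

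Finally I would case-split on $\beta$. If $\beta$ is not a multiple of a $-K_S$-conic class, no non-birational component exists, and case (1) follows from Proposition~\ref{prop:existfreecurve} combined with Theorem~\ref{theo:charpbirorempty}. If $\beta = n\alpha$ with $\alpha^2 = 0$ the class of a smooth conic fiber, then no irreducible rational curve has class $n\alpha$ for $n \geq 2$ (it would have to dominate the base of the fibration), so the birational locus is empty and the unique multi-cover component gives case (2). In cases (3) and (4) both kinds of components exist: the non-birational component comes from multi-covers of the corresponding $-K_S$-conic family, while the birational component is produced by smoothing a chain of $n$ general free $-K_S$-conics in $|\alpha|$ as in the final steps of the proof of Proposition~\ref{prop:existfreecurve}. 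In case (3) there is exactly one non-birational component because the moduli of conics in $|-K_S|$ (respectively $|-K_{S'}|$) is irreducible under our hypotheses and exclusions; in case (4) the pencil $|-2K_S|$ may carry several dominant subfamilies of rational conics, which accounts for the weaker ``at least two'' statement.

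The main technical obstacle is the chain-smoothing construction in cases (3) and (4): one must verify that the resulting comb or chain lies in the smooth locus of $\overline{M}_{0,0}(S,\beta)$, so that smoothing produces an irreducible free birational curve. This is handled by the same normal-bundle analysis used at the end of Proposition~\ref{prop:existfreecurve}, making essential use of $p \geq 11$ in degree $1$ to exclude the pathological normal bundle $\mathcal{O}(-3) \oplus k[t]/(t^2)$ and of Lemmas~\ref{lemm: line_conic} and \ref{lemm: conic_conic} to ensure transverse intersections among the components of the chain.
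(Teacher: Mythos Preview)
Your approach is essentially the same as the paper's: combine Proposition~\ref{prop:existfreecurve}, Theorem~\ref{theo:charpbirorempty}, and the dimension count $(m-1)+(2b-2)\geq mb-1$ to force $m=2$ for non-birational components, then case-split on $\beta$ and produce the birational component in cases (3) and (4) by gluing and smoothing free conics. Two minor corrections are worth noting. First, Lemma~\ref{lemm:degree2curvessep} only treats del Pezzo surfaces of degree $\geq 2$; for the $d=1$ conic classes (pullbacks of $|-K_{S'}|$ and members of $|-2K_S|$) the correct reference is Lemma~\ref{lemm:degree2curves}. Second, your final paragraph overstates the difficulty of the smoothing step: in cases (3) and (4) the conics you are chaining are already free by Theorem~\ref{theo:separabilitylargechar}, so smoothing is immediate from vanishing of $H^1$ on a chain of free curves and does not require Lemmas~\ref{lemm: line_conic}, \ref{lemm: conic_conic}, or the exclusion of the normal sheaf $\mathcal{O}(-3)\oplus k[t]/(t^2)$ --- those tools were needed elsewhere (Proposition~\ref{prop:existfreecurve} when attaching a non-free $-K_S$-line, and Lemma~\ref{curve-general_p}), not here.
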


The proof is essentially the same as the proof of Theorem \ref{theo:dp1char0classification}.

\begin{proof}
By Proposition \ref{prop:existfreecurve} we know that $\overline{M}_{0,0}(S,\beta)$ is represented by a stable map with irreducible domain.  Let $M$ be a component generically parametrizing stable maps with irreducible domains.  When the general map parametrized by $M$ is not birational, we argue just as in the proof of Theorem \ref{theo:dp1char0classification}.  In particular this proves that such maps can only exist when $\beta$ is the multiple of a $-K_{S}$-conic.

It only remains to classify the irreducible components of $\overline{M}^{bir}(S,\beta)$.  By Theorem \ref{theo:charpbirorempty} $\overline{M}^{bir}(S,\beta)$ is either irreducible or empty.  If $\beta$ is not a multiple of the class of a conic, then Proposition \ref{prop:existfreecurve} shows that $\beta$ is represented by a stable map with irreducible domain and the previous paragraph shows that this map must be birational.  Thus $\overline{M}^{bir}(S,\beta)$ is non-empty, hence irreducible.  The case when $\beta$ is a multiple of a smooth rational conic is the same as in the proof of Theorem \ref{theo:dp1char0classification}.  If $d = 2$ and $\beta$ is a multiple of $-K_{S}$, recall that $|-K_{S}|$ is represented by a free curve by Theorem \ref{theo:separabilitylargechar}.  By gluing and smoothing a chain of such curves we find an irreducible rational curve of class $\beta$.  If $d=1$ and $\beta$ is a pullback under a map $\phi$ then we can find an irreducible rational curve of class $\beta$ by appealing to the degree $2$ case.  If $d = 1$ and $\beta$ is a multiple of $-2K_{S}$, then $|-2K_{S}|$ is represented by a free rational curve by Theorem \ref{theo:separabilitylargechar}.
\end{proof}

\section{The Fujita invariant for surfaces in characteristic $p$}
\label{sec: fujita}

In this section we study the Fujita invariant (which we will also call the $a$-invariant) for surfaces in characteristic $p$.  Our goal is to prove a classification theorem and to control the behavior of the Fujita invariant under finite covers.  Throughout we work over an algebraically closed field $k$ of characteristic $p$.

\begin{defi}\cite[Definition 2.2]{HTT15}
\label{defi: Fujita invariant}
Let $X$ be a smooth projective variety and let $L$ be a big and nef $\mathbb{Q}$-divisor on $X$. 
The {\it Fujita invariant} (which we will also call the $a$-invariant) is
$$
a(X, L) := \min \{ t\in \bR \mid t[L] + [K_X] \in \Eff^{1}(X) \}.
$$
If $L$ is nef but not big, we set $a(X,L) = \infty$.
\end{defi}

By \cite[Proposition 2.7]{HTT15}, $a(X, L)$ does not change when pulling back $L$ by a birational map between smooth varieties.  Thus, when $X$ is a singular projective variety which admits a resolution of singularities, we define the Fujita invariant for $X$ by pulling back to a smooth birational model $\phi : \widetilde{X} \ra X$:
$$
a(X, L):= a(\widetilde{X}, \phi^*L).
$$
This definition does not depend on the choice of $\phi$.

\begin{rema}
Suppose that $X$ is a smooth projective variety and $L$ is a big and nef divisor on $X$.  Then $a(X,L) > 0$ if and only if $X$ admits a dominant family of rational curves satisfying $K_{X} \cdot C < 0$.  This follows from the following theorem:
\end{rema}

\begin{theo}[\cite{MM86} Theorem 1, \cite{BDPP13} 0.3 Corollary, \cite{Das} Theorem 1.6]
\label{theo:Das}
Let $X$ be a smooth projective variety over an algebraically closed field.  Then $K_{X}$ is not pseudo-effective if and only if $X$ admits a dominant family of rational curves satisfying $K_{X} \cdot C < 0$.
\end{theo}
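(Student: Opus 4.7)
For the easy direction, I would first observe that a dominant family of rational curves $C$ with $K_{X} \cdot C < 0$ forces $K_{X}$ to be non-pseudo-effective. Indeed, the numerical class $[C]$ lies in the cone of movable curve classes, because a general member of such a family avoids any prescribed proper closed subset. Any pseudo-effective divisor pairs non-negatively against every movable class (by approximating by effective divisors and observing that a general member of the family meets each effective divisor non-negatively). Hence if $K_{X}$ were pseudo-effective we would have $K_{X} \cdot [C] \geq 0$, contradicting $K_{X} \cdot C < 0$.

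For the harder direction, the plan is to break the argument into two stages. First, from the hypothesis that $K_{X}$ is not pseudo-effective I would produce a covering family of curves $\{C_{t}\}_{t \in T}$ on $X$ with $K_{X} \cdot C_{t} < 0$. This uses the duality between the pseudo-effective cone of divisors $\Eff^{1}(X)$ and the closed cone of movable curves $\overline{\mathrm{Mov}}_{1}(X)$, a duality valid in arbitrary characteristic. Since $K_{X}$ is assumed to lie outside $\Eff^{1}(X)$, by Hahn--Banach separation there is a movable class $\alpha$ with $K_{X} \cdot \alpha < 0$, and after scaling one may assume $\alpha$ is represented by an actual covering family of irreducible curves. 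Second, I would upgrade this family of curves to a family of rational curves with the same negativity property by invoking the characteristic-$p$ Miyaoka--Mori uniruledness criterion applied to this covering family.

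The main obstacle is this second step in positive characteristic. The hard part will be that one cannot simply pass to a high-genus cover to amplify deformations, as is done in characteristic zero. Instead, one precomposes a general map $f \colon C \to X$ in the family with a high power of the absolute Frobenius $F_{C}^{e} \colon C \to C$; this multiplies the $-K_{X}$-degree by $p^{e}$ while keeping the genus of $C$ fixed, so Mori's parameter count yields arbitrarily many first-order deformations of the map fixing two general points. Mori's bend-and-break then forces a degeneration whose image contains a rational component, and a careful bookkeeping of anticanonical degrees across the components ensures that at least one rational component $R$ satisfies $K_{X} \cdot R < 0$. Finally, varying the base point, I would verify that the family of such $R$ dominates $X$, completing the proof.
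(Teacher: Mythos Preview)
The paper does not give its own proof here; the statement is quoted from \cite{Das} and used as a black box, so there is nothing to compare against beyond the citation. Your outline follows the standard characteristic-zero route --- cone duality produces a covering family with $K_X \cdot C_t < 0$, and bend-and-break with Frobenius precomposition upgrades this to a dominant family of rational curves --- and both the easy direction and the bend-and-break stage are correctly described.

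The gap is the cone duality itself. You assert that the duality between the pseudo-effective cone of divisors and the closed cone of movable curves is ``valid in arbitrary characteristic,'' but this is the theorem of Boucksom--Demailly--P\u{a}un--Peternell, whose original proof is analytic and specific to characteristic zero. Producing a characteristic-free argument for this duality in arbitrary dimension is itself substantial --- it is essentially the content one must supply to obtain the cited result, and is part of what \cite{Das} does. As written, your first stage assumes the hard part of the theorem; if you mean to invoke an independent positive-characteristic version of BDPP, you need a precise reference and should verify that its proof is not downstream of the statement in question.
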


The rationality of the Fujita invariant is proved in characteristic $0$ for threefolds by Batyrev in \cite{Bat92} and for higher dimensional varieties in \cite{BCHM}.  For varieties of low dimension in characteristic $p$, it follows from the work of \cite{Das}.

\begin{theo}[\cite{Das}]
\label{theo:rationality}
Let $X$ be a smooth projective variety of dimension $\leq 3$ and $L$ be a big and nef $\mathbb Q$-divisor on $X$. We also assume that the characteristic $p$ of the ground field $k$ is $> 5$ when the dimension of $X$ is $3$.  Then $a(X, L)$ is rational.
\end{theo}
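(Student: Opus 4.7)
The plan is to use the Minimal Model Program. If $K_X$ is already pseudo-effective then $a(X,L) = 0$ and we are done, so we may assume $K_X$ is not pseudo-effective. Theorem~\ref{theo:Das} then implies that $X$ is uniruled, and the MMP is available in our setting by the references cited in the preliminaries, namely \cite{Mumford}, \cite{BM77}, \cite{BM76} in dimension $2$, and \cite{HX15}, \cite{CTX15}, \cite{Bir16}, \cite{BW17} in dimension $3$ when $p > 5$.

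The first step is to run a $K_X$-MMP with scaling by $L$. Starting from $\lambda_0 = \inf\{t \geq 0 : K_X + tL \text{ is nef}\}$, I would observe that each scaling parameter $\lambda_i$ is rational, as an application of the cone theorem: the $(K_{X_i} + \lambda_i L_i)$-trivial $K_{X_i}$-negative extremal ray has rational slope, expressible as $-K_{X_i} \cdot C_i / L_i \cdot C_i$ for the extremal rational curve $C_i$. At each step one either contracts a divisor or performs a flip, producing $(X_{i+1}, L_{i+1})$ with $\lambda_{i+1} \leq \lambda_i$. A key point, which follows from \cite[Proposition 2.7]{HTT15} together with the observation that pseudo-effective cones of divisors are identified under small modifications, is that the Fujita invariant is preserved throughout the MMP: $a(X_{i+1}, L_{i+1}) = a(X_i, L_i) = a(X, L)$.

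By termination of the MMP with scaling in our dimension range (in dimension $3$ this is the delicate ingredient requiring $p > 5$, provided by \cite{Bir16} and \cite{BW17}), the process halts at some $(X_N, L_N)$. There are then two cases. Either $K_{X_N} + \lambda_N L_N$ is nef, in which case $\lambda_N = a(X,L)$ by the monotonicity and is rational by construction; or the final step produces a Mori fiber space $X_N \to Z$ on which $K_{X_N} + \lambda_N L_N$ is numerically trivial on fibers, so that $\lambda_N = a(X,L)$ is computed on a general fiber $F$ as a ratio $-K_F \cdot C \, / \, L_N|_F \cdot C$ of intersection numbers against a generator $C$ of the extremal ray, which is again rational. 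The main obstacle I anticipate is the termination statement in dimension $3$, which is exactly the reason for the hypothesis $p > 5$; everything else is a formal consequence of the rationality of extremal rays and the birational invariance of $a$.
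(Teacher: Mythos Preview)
Your approach of running the $K_X$-MMP with scaling by $L$ has a concrete gap: since $L$ is only assumed big and nef rather than ample, the nef threshold $\lambda_0 = \inf\{t \geq 0 : K_X + tL \text{ is nef}\}$ may be $+\infty$. Whenever there is a $K_X$-negative curve $C$ with $L \cdot C = 0$ one has $(K_X + tL) \cdot C = K_X \cdot C < 0$ for every $t$, so $K_X + tL$ is never nef and the scaling procedure never gets off the ground; your formula $\lambda_i = -K_{X_i} \cdot C_i / L_i \cdot C_i$ is then a division by zero. A concrete instance is $X = \Bl_p\mathbb{P}^2$ with $L$ the pullback of $\mathcal{O}_{\mathbb{P}^2}(1)$, where the exceptional curve $E$ has $L \cdot E = 0$ and $K_X \cdot E = -1$. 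Your invocation of \cite[Proposition 2.7]{HTT15} also only covers birational morphisms between smooth models along which $L$ is pulled back, whereas a divisorial contraction in an MMP with scaling by $L$ typically has $L_i \cdot C_i > 0$ on the contracted ray, so $L_i \neq \phi^* L_{i+1}$ and the invariance of $a$ is not immediate.

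The paper avoids both issues by first passing to a log pair. Lemma~\ref{lemm:terminalpair} produces an effective $L' = A' + \epsilon E \sim_{\mathbb{Q}} L$ with $(X, a(X,L)L')$ terminal; after rescaling so that $a := a(X,L) > 1$, one works in the two-dimensional real vector space $V$ spanned by $A'$ and $E$, perturbs $A'$ to an ample $A \sim_{\mathbb{Q}} A'$ keeping $K_X + A + (a-1)A' + a\epsilon E$ terminal, and then quotes \cite[Theorem~1.2]{Das} to conclude that the pseudo-effective polytope $\mathcal{E}_A(V)$ is rational. Since $K_X + A + (a-1)A' + a\epsilon E \sim_{\mathbb{Q}} K_X + aL$ lies on the boundary of this rational polytope, $a$ is rational. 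The replacement of $L$ by an effective $\mathbb{Q}$-linearly equivalent boundary is exactly what absorbs the $L$-trivial $K_X$-negative rays that obstruct your argument, and the polytope statement from \cite{Das} already packages the MMP-with-scaling input you were aiming to reproduce by hand.
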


To derive this statement from \cite{Das}, we will need a well-known lemma:

\begin{lemm}
\label{lemm:terminalpair}
Let $X$ be a smooth projective variety of dimension $\leq 3$ and $L$ be a big and nef $\mathbb Q$-divisor on $X$. Let $a$ be any positive real number. Then  there exists an effective $\mathbb Q$-divisor $0 \leq L' \sim_{\mathbb Q} L$ such that $(X, aL')$ is a terminal pair.
\end{lemm}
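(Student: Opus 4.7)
The plan is to apply Kodaira's lemma and then arrange things carefully on a log resolution. Since $L$ is big, Kodaira's lemma produces an ample $\mathbb Q$-divisor $A$ and an effective $\mathbb Q$-divisor $E_0$ with $L \sim_{\mathbb{Q}} A + E_0$. For any $\epsilon \in (0, 1]$ we may then write $L \sim_{\mathbb{Q}} \epsilon E_0 + A_\epsilon$ where $A_\epsilon := (1-\epsilon)L + \epsilon A$ is ample (since $L$ is nef and $A$ is ample). Choose $m$ large enough that $m A_\epsilon$ is very ample and Cartier. Because $\dim X \leq 3$, resolution of singularities provides a log resolution $\pi \colon Y \to X$ of the pair $(X, E_0)$. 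By Bertini, a general member $H \in |m A_\epsilon|$ is smooth, and by taking $H$ sufficiently general, its strict transform $H' := \pi^{-1}_* H$ is also smooth and meets the SNC divisor $\pi^{-1}_* E_0 + \mathrm{Exc}(\pi)$ transversally. Setting $L' := \epsilon E_0 + \tfrac{1}{m} H$ yields an effective $\mathbb Q$-divisor with $L' \sim_{\mathbb{Q}} L$, and it remains to verify that $(X, a L')$ is terminal.

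Write $K_Y = \pi^* K_X + \sum_j b_j F_j$ with $b_j \geq 1$, and $\pi^* E_0 = \pi^{-1}_* E_0 + \sum_j c_j F_j$ with $c_j \geq 0$, the sums taken over $\pi$-exceptional divisors. Because each $\pi(F_j)$ has codimension $\geq 2$ in $X$ while $H$ is a general member of a very ample linear system, $H$ does not contain any $\pi(F_j)$, so $\pi^* H = H'$ with no exceptional contribution. The discrepancy of $F_j$ with respect to $(X, a L')$ is thus $b_j - a\epsilon c_j$, which is strictly positive once $\epsilon$ is small. On $Y$ the pair $(Y, \pi^{-1}_*(a L') + \sum_j a\epsilon c_j F_j)$ has SNC support with every coefficient bounded by $\max(a\epsilon\, C,\, a/m)$ for a constant $C$ depending only on $E_0$ and $\pi$; choosing $\epsilon$ small and $m$ large makes every coefficient strictly less than $1/2$. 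At any stratum of codimension $r \geq 2$ of this SNC pair, at most $r$ components meet transversally, and the sum of their coefficients is then strictly less than $r - 1$. A standard SNC discrepancy computation shows that every exceptional divisor over $Y$ has positive discrepancy, so $(X, a L')$ is terminal.

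The only point requiring care in positive characteristic is the Bertini-type transversality on the log resolution. This is routine here: smoothness of a general member of a base-point-free linear system on a smooth variety holds in arbitrary characteristic, and since $|m A_\epsilon|$ is very ample no positive-codimensional subvariety of $X$ lies in a general $H$, which yields the desired transversality with $\pi^{-1}_* E_0 + \mathrm{Exc}(\pi)$ by a dimension count applied to the strata. Kodaira's lemma and the existence of log resolutions in dimension $\leq 3$ are both available in positive characteristic, so no further obstruction arises.
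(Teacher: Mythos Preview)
Your overall strategy matches the paper's: decompose $L$ as an ample piece plus a small multiple of an effective divisor, pass to a log resolution of the effective part, apply a Bertini argument to the ample piece to arrange simple normal crossings, and then read off terminality from the smallness of the coefficients. The one substantive difference is \emph{where} the general divisor is chosen, and this is exactly where your argument has a gap in positive characteristic.

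You choose $H$ downstairs on $X$, where $|mA_\epsilon|$ is very ample, and then pull back to the log resolution $Y$. To conclude terminality you need $H' + \pi^{-1}_* E_0 + \mathrm{Exc}(\pi)$ to be SNC on $Y$. Your justification rests on two assertions: that ``smoothness of a general member of a base-point-free linear system on a smooth variety holds in arbitrary characteristic,'' and that a dimension count on the images of strata gives transversality upstairs. The first assertion is simply false in characteristic $p$ (consider the sub-system $\langle x_0^p,\ldots,x_n^p\rangle \subset H^0(\bP^n,\cO(p))$). The linear system $\pi^*|mA_\epsilon|$ on $Y$ defines the birational, hence separable, map $Y \to X \hookrightarrow \bP^N$, so the situation is salvageable, but not by the principle you invoke. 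The second assertion is also insufficient as stated: knowing that no stratum image is contained in $H$ gives proper intersection, not transversality of tangent spaces; one would need an inductive argument along the sequence of blow-ups, checking at each step that $H$ is transverse to the new center (which does follow from very ampleness downstairs via a dual-variety dimension count, but you do not carry this out).

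The paper avoids this entirely by a small twist. Rather than pulling back the ample class, it subtracts a small effective $\beta$-ample exceptional divisor $F$, so that $\beta^*A_\epsilon - \epsilon F$ is ample \emph{on the resolution} $\widetilde X$. One then chooses a general $\bQ$-divisor $A'_\epsilon$ in that ample class on $\widetilde X$; very-ample Bertini (valid in every characteristic) immediately gives the required SNC with $\beta^*E + F$. The pushforward $L' = \beta_* A'_\epsilon + \epsilon E$ is then shown to satisfy $\beta^*L' = A'_\epsilon + \epsilon(\beta^*E + F)$ via the negativity lemma, and terminality follows from \cite[Corollary 2.32]{KM98}. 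Twisting by $-\epsilon F$ to force ampleness upstairs is precisely the device that makes the positive-characteristic Bertini step clean, and it is what your argument is missing.
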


\begin{proof}
It follows from \cite[Proposition 2.61]{KM98} that there exists an effective divisor $E$ such that for any rational number $0 < \epsilon \ll 1$, $A_\epsilon = L-\epsilon E$ is ample.
Let $\beta : \widetilde{X} \to X$ be a log resolution for $(X, E)$ whose existence is guaranteed by \cite{CP09} and \cite{Cut09} in dimension $3$.  Let $F$ be an effective exceptional divisor such that $-F$ is $\beta$-ample. 
Then we have $$\beta^*L = \beta^*A_\epsilon + \epsilon\beta^*E = \beta^*A_\epsilon - \epsilon F + \epsilon(\beta^*E  + F).$$
For $0 < \epsilon \ll 1$, $(\widetilde{X}, a\epsilon(\beta^*E  + F))$ is a terminal pair and $\beta^*A_\epsilon - \epsilon F$ is ample. Thus one can find a general ample $\mathbb Q$-divisor $A'_\epsilon \sim_{\mathbb Q} \beta^*A_\epsilon - \epsilon F$ such that the support of $A_\epsilon' + \epsilon(\beta^*E  + F)$ is a snc divisor and every coefficient of $aA'_\epsilon$ is strictly less than $1$. Let $L' = \beta_*A_\epsilon' + \epsilon E$. By the negativity lemma, we have $\beta^*L' = A_\epsilon' + \epsilon(\beta^*E  + F)$.  Thus when $\epsilon$ is sufficiently small, $(X, aL')$ is a terminal pair by \cite[Corollary 2.32]{KM98}.
\end{proof}

\begin{proof}[Proof of Theorem~\ref{theo:rationality}]
We only prove the case of dimension $3$.
After rescaling of $L$ we may assume that $a(X, L) > 1$.
We pick $L' = A'+ \epsilon E$ as in the proof of Lemma~\ref{lemm:terminalpair} with $a = a(X, L)$.
Let $V$ be the subspace of the space of $\mathbb R$-divisors which is generated by $A', E$.
Using the arguments in Lemma~\ref{lemm:terminalpair} one can find an ample $\mathbb Q$-divisor $A$ such that $A$ does not share any component with $A'$ and $E$, $A\sim_{\mathbb Q} A'$, and $K_X + A + (a-1)A' + a\epsilon E$ is terminal.
Then it follows from \cite[Theorem 1.2]{Das} that the pseudo-effective polytope $\mathcal E_{A}(V)$ in $V$ is a rational polytope.
Since $A + (a-1)A' + a\epsilon E + K_X$ is on the boundary of this polytope, we conclude that $a$ is rational.
\end{proof}

The following notion plays a central role in the study of Fujita invariants:
\begin{defi}
Let $X$ be a smooth projective variety of dimension $\leq 3$ such that $K_X$ is not pseudo-effective.
Let $L$ be a big and nef $\mathbb Q$-Cartier divisor on $X$. We say $(X, L)$ is adjoint rigid if $a(X, L)L + K_X$ has Iitaka dimension $0$.

When $X$ is singular and admits a resolution of singularities $\beta: \widetilde{X} \to X$, we say $(X, L)$ is adjoint rigid if $(\widetilde{X}, \beta^*L)$ is adjoint rigid.
This definition does not depend on the choice of $\beta$.  
\end{defi}

\begin{lemm} \label{lemm:ainvseparablecover}
Let $f: Y \to X$ be a dominant separable generically finite morphism of smooth varieties and let $L$ be a big and nef $\mathbb{Q}$-divisor on $X$.  Then $a(Y,f^{*}L) \leq a(X,L)$.
\end{lemm}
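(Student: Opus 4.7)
The plan is to use the ramification formula for separable morphisms together with the fact that pullback of pseudo-effective divisors under dominant morphisms remains pseudo-effective. This argument is characteristic-free and the separability hypothesis enters exactly at the step where we compare canonical divisors.

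First I would verify that $a(Y, f^{*}L)$ is well-defined, i.e.\ that $f^{*}L$ is big and nef on $Y$. Nefness is preserved under arbitrary pullback, and bigness is preserved under pullback by a dominant generically finite morphism (the top self-intersection scales by $\deg f$). So $f^{*}L$ is big and nef and $a(Y,f^{*}L)$ is a finite real number.

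Next, because $f$ is generically finite and separable, the sheaf map $f^{*}\Omega_{X}^{1} \to \Omega_{Y}^{1}$ is generically an isomorphism, so there is an effective ramification divisor $R$ on $Y$ with
\[
K_{Y} = f^{*}K_{X} + R.
\]
By definition of $a(X,L)$, the class $a(X,L) L + K_{X}$ is pseudo-effective on $X$. Pulling back under the dominant morphism $f$ preserves pseudo-effectiveness: if $a(X,L) L + K_{X}$ is a limit of effective $\mathbb{Q}$-classes $D_{n}$, then each $f^{*}D_{n}$ is effective and $f^{*}D_{n} \to f^{*}(a(X,L)L + K_{X})$. Thus $a(X,L) f^{*}L + f^{*}K_{X}$ is pseudo-effective on $Y$.

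Finally, adding the effective divisor $R$ gives
\[
a(X,L) f^{*}L + K_{Y} \;=\; \bigl(a(X,L) f^{*}L + f^{*}K_{X}\bigr) + R,
\]
which is a sum of a pseudo-effective class and an effective class, hence pseudo-effective. By the definition of the Fujita invariant this forces $a(Y, f^{*}L) \leq a(X,L)$, completing the proof. There is no real obstacle here; the only subtlety worth noting is that the argument genuinely uses separability of $f$ (otherwise $\Omega_{Y/X}^{1}$ need not be torsion and one gets no effective ramification divisor), which is why the analogous statement fails for inseparable covers and produces the breaking maps studied elsewhere in the paper.
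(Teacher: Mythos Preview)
Your proof is correct and follows essentially the same route as the paper: use the Riemann--Hurwitz formula $K_Y = f^*K_X + R$ with $R$ effective (which requires separability), so that $K_Y + a(X,L)f^*L = f^*(K_X + a(X,L)L) + R$ is pseudo-effective. You include a few more details (well-definedness of $a(Y,f^*L)$, why pullback preserves pseudo-effectiveness), but the argument is the same.
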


\begin{proof}
By the Riemann-Hurwitz formula there is an effective ramification divisor $R$ such that $K_{Y} = f^{*}K_{X} + R$.  Thus
\begin{equation*}
K_{Y} + a(X,L)f^{*}L = f^{*}(K_{X} + a(X,L)L) + R
\end{equation*}
is pseudo-effective, proving the desired inequality.
\end{proof}

Note that the result of Lemma \ref{lemm:ainvseparablecover} may fail for inseparable maps.  A well-known example is given by a unirational parametrization of a K3 surface: a smooth rational surface has positive $a$-invariant with respect to any polarization but a K3 surface has $a$-invariant $0$.

\subsection{Surfaces with large $a$-invariant}

We next classify the pairs of a smooth projective surface $S$ and a divisor $L$ such that $a(S,L) > 1$.  Since the minimal model program works just as in characteristic $0$, there are essentially no differences in the characteristic $p$ situation.  For completeness we will include a quick proof of every assertion.

\begin{prop}[\cite{LTT14} Proposition 5.9] \label{prop:surfaceconstants}
Let $S$ be a smooth uniruled projective surface over $k$ and let $L$ be a big and nef $\mathbb{Q}$-divisor on $S$.
\begin{enumerate}
\item Suppose that $\kappa(K_{S} + a(S,L)L) = 1$.  Let $F$ be a general fiber of the canonical map for $(S,a(S,L)L)$.  Then
\begin{equation*}
a(S,L) = a(F,L) = \frac{2}{L \cdot F}.
\end{equation*}
\item Suppose that $\kappa(K_{S} + a(S,L)L) = 0$.  Then there is a birational morphism $\phi: S \to S'$ where $S'$ is a smooth weak del Pezzo surface such that $-K_{S'} \sim_{\mathbb{Q}} a(S,L)\phi_{*}L$.
\end{enumerate}
\end{prop}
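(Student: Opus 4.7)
The plan is to analyze the divisor $D := K_S + a(S,L)L$, which lies on the boundary of the pseudo-effective cone by the very definition of $a(S,L)$. The core tool is the Minimal Model Program for surfaces, which is available in arbitrary characteristic as recalled at the start of Section \ref{sec: fujita}. Since $S$ is uniruled, Theorem \ref{theo:Das} gives that $K_S$ is not pseudo-effective, so $a(S,L) > 0$, and $a(S,L)$ is rational by Theorem \ref{theo:rationality}.

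For part (1), when $\kappa(D) = 1$, after passing to a birational model if necessary (which does not change any relevant invariant) the Iitaka fibration for $D$ is a morphism $\pi : S \to B$ to a smooth curve, and a general fiber $F$ satisfies $D|_F \sim_{\mathbb{Q}} 0$. Since $F^2 = 0$, adjunction gives $K_F = K_S|_F$, so $K_F + a(S,L)\, L|_F \sim_{\mathbb{Q}} 0$ on $F$. Taking degrees yields $\deg K_F + a(S,L)(L \cdot F) = 0$. Because $L$ is big and nef while $F$ is nonzero of self-intersection zero, the Hodge Index Theorem forces $L \cdot F > 0$; combined with $a(S,L) > 0$, this makes $\deg K_F < 0$, so $F \cong \mathbb{P}^1$ and $\deg K_F = -2$. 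Hence $a(S,L) = 2/(L \cdot F)$, and the same equality read on $\mathbb{P}^1$ identifies this with $a(F, L|_F)$.

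For part (2), when $\kappa(D) = 0$, I would run the $D$-MMP. Any $D$-negative irreducible curve $C$ must satisfy $K_S \cdot C < 0$ (since $a(S,L)L$ is nef), and adjunction then gives that $C$ is rational with $C^2 \geq -1$. If $C^2 \geq 0$, then $C$ deforms in a family covering $S$, so its numerical class pairs non-negatively with any pseudo-effective class, contradicting $D \cdot C < 0$. Hence every $D$-negative curve is a $(-1)$-curve, and one can contract them in succession via $\phi : S \to S'$ to reach a smooth surface where $D' := K_{S'} + a(S,L) \phi_* L$ is nef. The pushforward $\phi_* L$ is still nef by the projection formula applied to each contraction, and $D'$ still has Iitaka dimension $0$. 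Abundance for nef divisors on smooth projective surfaces in arbitrary characteristic then yields $D' \sim_{\mathbb{Q}} 0$, i.e.\ $-K_{S'} \sim_{\mathbb{Q}} a(S,L) \phi_* L$. Since $\phi_* L$ is big and nef, $-K_{S'}$ is big and nef, so $S'$ is a smooth weak del Pezzo surface.

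The main obstacle is the step in (2) ruling out $D$-negative curves with $C^2 \geq 0$; this combines adjunction, the deformation theory of rational curves covering the surface, and the interpretation of $a(S,L)$ as the boundary of the pseudo-effective cone. In characteristic $p$ all these ingredients remain valid in dimension $2$, so the argument goes through essentially unchanged from characteristic $0$.
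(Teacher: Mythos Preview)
Your argument is correct and follows essentially the same route as the paper: both run the $(K_S + a(S,L)L)$-MMP, use that $L$ being big and nef forces every contracted curve to be a $(-1)$-curve, and then analyze the resulting semiample/nef divisor on $S'$ according to its Iitaka dimension. The only differences are organizational --- you treat cases (1) and (2) separately and spell out the $(-1)$-curve argument in more detail, whereas the paper runs the MMP once up front and then branches; and where you invoke abundance to get $D' \sim_{\mathbb{Q}} 0$, the paper simply asserts semiampleness (which in either presentation is the basepoint-free theorem applied with $D' - K_{S'} = a(S,L)\phi_*L$ big and nef).
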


\begin{proof}
We run the miminal model program for $(S,a(S,L)L)$ to obtain a birational morphism $\phi: S \to S'$.  Since $L$ is a big and nef divisor each birational step of the MMP is a contraction of a $(-1)$-curve and the end result $S'$ is smooth.  We know that $K_{S'} + a(S,L)\phi_{*}L$ is semiample but not big so that its Iitaka dimension must be $0$ or $1$.  When the Iitaka dimension is $0$, we obtain the desired statement.  When the Iitaka dimension is $1$ then by the classification of surfaces we know that the corresponding map must have general fiber isomorphic to $\mathbb{P}^{1}$. Indeed, let $\pi : S' \to B$ be the semiample fibration of $K_{S'} + a(S,L)\phi_{*}L$. Pick a sufficiently small $\epsilon > 0$ and perform a relative $(K_{S'} + (a(S,L)-\epsilon)\phi_{*}L)$-MMP over $B$. Then the outcome is a Mori fiber space so one may appeal to the classification of Mori fiber spaces in dimension $2$.
In particular, in this situation $K_{S'} + a(S,L)\phi_{*}L$ vanishes when restricted to a general fiber $F$, yielding the desired description of the $a$-invariant.
\end{proof}

\begin{coro}
Let $S$ be a smooth uniruled projective surface and let $L$ denote a big and nef divisor on $S$.  Then
\begin{equation*}
a(S,L) \in \left\{ \left. \frac{2}{n}  \right| n \in \mathbb{N} \right\} \cup \left\{ \left. \frac{3}{n}  \right| n \in \mathbb{N} \right\}
\end{equation*}
\end{coro}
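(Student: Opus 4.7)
The plan is to invoke Proposition~\ref{prop:surfaceconstants} and split into its two cases. Since $S$ is uniruled, Theorem~\ref{theo:Das} guarantees that $K_S$ is not pseudo-effective, so $a(S,L) > 0$; since $L$ is big, $a(S,L)$ is finite, and Theorem~\ref{theo:rationality} ensures it is rational. Running the $(S, a(S,L)L)$-MMP as in Proposition~\ref{prop:surfaceconstants}, the Iitaka dimension of $K_S + a(S,L)L$ is either $0$ or $1$ (it cannot be $2$, or $K_S + a(S,L)L$ would be big, contradicting minimality of $a(S,L)$).

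When the Iitaka dimension equals $1$, Proposition~\ref{prop:surfaceconstants}(1) directly gives $a(S,L) = 2/(L \cdot F)$ for a general fiber $F$ of the induced fibration. Since $F$ is an effective curve and $L$ is Cartier and nef with $L \cdot F > 0$, the intersection $L \cdot F$ is a positive integer $n$, so $a(S,L) = 2/n$.

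When the Iitaka dimension equals $0$, Proposition~\ref{prop:surfaceconstants}(2) produces a birational morphism $\phi \colon S \to S'$ to a smooth weak del Pezzo surface of some degree $d \leq 9$ satisfying $-K_{S'} \sim_\bQ a(S,L)\, \phi_*L$. Because $S'$ is a smooth rational surface, $\Pic(S') = \NS(S')$, so the Cartier divisors $-K_{S'}$ and $\phi_*L$ lie on a common ray of $\NS(S')_\bQ$. Let $D_0$ denote the primitive integral class on this ray; then $-K_{S'} = q D_0$ and $\phi_*L = r D_0$ for positive integers $q,r$, giving $a(S,L) = q/r$.

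The crux is the bound $q \leq 3$. Since $D_0$ is a Cartier divisor on the smooth surface $S'$, its self-intersection $D_0^2$ is an integer; since $D_0$ is proportional to the big divisor $-K_{S'}$, it is itself big, so $D_0^2 \geq 1$. The identity $d = (-K_{S'})^2 = q^2 D_0^2$ then forces $q^2 \leq d \leq 9$, so $q \in \{1,2,3\}$. Since $1/r = 2/(2r)$, we conclude $a(S,L) = q/r \in \{2/n : n \in \bN\} \cup \{3/n : n \in \bN\}$. The only substantive input beyond Proposition~\ref{prop:surfaceconstants} is this elementary divisibility bound coming from the fact that the degree of a weak del Pezzo surface is at most $9$.
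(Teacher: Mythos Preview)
Your proof is correct, and Case~(1) matches the paper exactly. In Case~(2), however, you take a genuinely different route. The paper argues geometrically: on any smooth weak del Pezzo surface $S'$ other than $\mathbb{P}^{2}$ there exists a curve $C$ with $-K_{S'}\cdot C = 2$ (e.g.\ a fiber of a conic fibration), while on $\mathbb{P}^{2}$ a line has $-K_{S'}\cdot C = 3$; intersecting the relation $-K_{S'} \sim_{\bQ} a(S,L)\,\phi_{*}L$ with such a $C$ immediately writes $a(S,L)$ as $2/n$ or $3/n$. Your argument is instead purely numerical: you pass to the primitive lattice generator $D_{0}$ on the common ray, write $-K_{S'} = qD_{0}$, and bound $q \leq 3$ via $q^{2}D_{0}^{2} = (-K_{S'})^{2} \leq 9$ together with $D_{0}^{2} \geq 1$. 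This avoids invoking any structural facts about curves on weak del Pezzo surfaces, using only the degree bound; on the other hand, the paper's approach makes transparent \emph{which} curve realizes the denominator, which feeds directly into the finer classification of Theorem~\ref{theo:largeainvsurfaces}.
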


\begin{proof}
In Case (1) of Proposition \ref{prop:surfaceconstants} we see directly that $a(S,L)$ has the form $2/n$.  In Case (2) of Proposition \ref{prop:surfaceconstants}, $S'$ will admit a curve of anticanonical degree $2$ unless $S' \cong \mathbb{P}^{2}$, in which case $S'$ will admit a curve of anticanonical degree $3$.  For such a curve $C$ we have
\begin{equation*}
a(S,L) = \frac{-K_{S'} \cdot C}{\phi_{*}L \cdot C}
\end{equation*}
and we deduce the desired expression.
\end{proof}

As a consequence we can easily classify the pairs $(S,L)$ with large $a$-invariant.

\begin{theo} \label{theo:largeainvsurfaces}
Let $S$ be a smooth uniruled projective surface and let $L$ denote a big and nef divisor on $S$.  If $a(S,L) > 1$ then
\begin{equation*}
a(S,L) \in \left\{3,2, \frac{3}{2} \right\}
\end{equation*}
Furthermore
\begin{enumerate}
\item If $a(S,L) = 3$ then there is a birational morphism $\phi: S \to \mathbb{P}^{2}$ such that $L = \phi^{*}\mathcal{O}(1)$.
\item If $a(S,L) = 2$ and $(S,L)$ is adjoint rigid then there is a birational morphism $\phi: S \to Q$ such that $Q$ is either a smooth quadric or a quadric cone in $\mathbb{P}^{3}$ and $L = \phi^{*}\mathcal{O}_{\mathbb{P}^{3}}(1)$.
\item If $a(S,L) = 2$ and $(S,L)$ is not adjoint rigid then there is a birational morphism $\phi: S \to S'$ where $S'$ is a ruled surface and $L$ is the pullback of a big and nef divisor with degree $1$ along the fibers of the ruling of $S'$. 
\item If $a(S,L) = 3/2$ then there is a birational morphism $\phi: S \to \mathbb{P}^{2}$ such that $L = \phi^{*}\mathcal{O}(2)$.
\end{enumerate}
\end{theo}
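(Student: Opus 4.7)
The plan is to combine Proposition~\ref{prop:surfaceconstants} with the classification of smooth weak del Pezzo surfaces by the divisibility of $-K$ in the Picard group. The restriction $a(S,L) \in \{3, 2, 3/2\}$ is immediate from the corollary preceding the theorem.

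First I would run a $(K_{S} + a(S,L)L)$-MMP to obtain a birational morphism $\phi : S \to S'$. As a preliminary reduction, I would verify that $L = \phi^{*}L'$ where $L' = \phi_{*}L$: the difference $L - \phi^{*}\phi_{*}L$ is an exceptional $\bQ$-divisor, and by the projection formula together with nefness of $L$ we have $(L - \phi^{*}\phi_{*}L)\cdot E \geq 0$ for every $\phi$-exceptional curve $E$, so the negativity lemma forces this difference to vanish. Proposition~\ref{prop:surfaceconstants} then provides two alternatives: either $(S',L')$ is adjoint rigid with $-K_{S'} \sim_{\bQ} a(S,L)L'$, or there is a fibration $S' \to B$ whose general fiber $F$ satisfies $L'\cdot F = 2/a(S,L)$.

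Next I would split into the three possible values of $a(S,L)$. When $a(S,L) = 3$ or $3/2$ the fibration alternative is ruled out since $L'\cdot F$ would be $2/3$ or $4/3$, so we must be in the adjoint-rigid case with $-K_{S'} \sim_{\bQ} 3L'$ or $\tfrac{3}{2}L'$.  Enumerating smooth weak del Pezzo surfaces whose anticanonical class is $3$-divisible in the Picard group yields only $\bP^{2}$, forcing $S' = \bP^{2}$ with $L' = \cO(1)$ or $\cO(2)$ respectively. When $a(S,L) = 2$ and $(S,L)$ is adjoint rigid, $-K_{S'}$ must be $2$-divisible in $\Pic(S')$; among smooth weak del Pezzo surfaces this occurs precisely for $\bP^{1}\times\bP^{1}$ (with $L' = f_{1} + f_{2}$) and for $\bF_{2}$ (with $L' = \sigma_{0} + 2f$), and in both cases $|L'|$ realizes the anticanonical image as a smooth quadric or quadric cone $Q \subset \bP^{3}$, so composing $\phi$ with the induced morphism $S' \to Q$ yields (2). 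The non-adjoint-rigid case with $a(S,L) = 2$ falls in the fibration alternative with $L'\cdot F = 1$, and standard arguments on the resulting $\bP^{1}$-fibration yield (3).

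The main subtlety is the enumeration of smooth weak del Pezzo surfaces whose anticanonical class has prescribed divisibility in $\Pic$. This is essentially combinatorial, since every smooth weak del Pezzo surface is obtained from $\bP^{2}$ or $\bP^{1}\times\bP^{1}$ by a sequence of blowups at distinct points, giving an explicit basis of the Picard group in which $-K$ is visible; only $\bP^{2}$ is $3$-divisible, and only $\bP^{1}\times\bP^{1}$ and $\bF_{2}$ are $2$-divisible. Since the classification of minimal rational surfaces and the two-dimensional MMP work identically in positive characteristic (as cited at the start of the preliminaries), no ingredients beyond those already referenced in the paper are required.
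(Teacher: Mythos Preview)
Your overall strategy matches the paper's: run the $(K_S + a(S,L)L)$-MMP to obtain $\phi: S \to S'$, show that $L$ is pulled back from $S'$, and then classify $(S', \phi_*L)$. Your case analysis (divisibility of $-K_{S'}$ in $\Pic$, the fibration case) fleshes out what the paper compresses into the single phrase ``using the classification of weak del Pezzo surfaces,'' and it is correct.

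However, your argument for $L = \phi^*\phi_*L$ via the negativity lemma is incomplete. Setting $D = L - \phi^*\phi_*L$, you observe that $D$ is $\phi$-exceptional and $D \cdot E = L \cdot E \geq 0$ for every exceptional curve $E$. The negativity lemma then yields only $-D \geq 0$, not $D = 0$: for instance, if $\phi$ is a single blow-up with exceptional curve $E$ and $L$ is nef with $L \cdot E > 0$, then $\phi^*\phi_*L = L + (L\cdot E)E \neq L$. Crucially, your argument at this step never uses the hypothesis $a(S,L) > 1$ or the integrality of $L$, and both are essential. The paper's argument is direct: each $(-1)$-curve $E$ contracted by the MMP satisfies $(K_S + a(S,L)L)\cdot E < 0$, hence $a(S,L)(L\cdot E) < 1$; since $a(S,L) > 1$ and $L \cdot E$ is a non-negative integer, this forces $L \cdot E = 0$. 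Repeating at each step of the MMP gives $L = \phi^*\phi_*L$.
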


\begin{proof}
Just as in Proposition \ref{prop:surfaceconstants} we run the MMP for $(S,a(S,L)L)$ and repeatedly contract $(-1)$-curves to obtain $\phi: S \to S'$.  Suppose that $E$ is the $(-1)$-curve contracted by the first step of the MMP.  Since $a(S,L) > 1$ and $(K_{S} + a(S,L)L) \cdot E < 0$, we see that $L \cdot E = 0$.  Thus $L$ is pulled back from the target of the first step of the MMP.  Repeating this logic inductively, we see there is some big and nef divisor $L'$ on $S'$ such that $L = \phi^{*}L'$.  Using the classification of weak del Pezzo surfaces we obtain the description of the theorem.
\end{proof}

\subsection{Covers which increase the $a$-invariant} \label{sect:increaseainv}

Suppose that $S$ is a weak del Pezzo surface.  As discussed in the introduction, we expect that the ``pathological'' dominant families of rational curves on $X$ are controlled by generically finite maps $f: Y \to S$ such that $a(Y,-f^{*}K_{S}) > a(S,-K_{S}) = 1$.  Our goal in this section is to classify the situations in which the $a$-invariant of $Y$ is strictly larger than that of $S$.

\begin{theo} \label{theo:dominantaclassificationsurfaces}
Let $S$ be a weak del Pezzo surface and suppose that $f: Y \to S$ is a dominant generically finite morphism such that $a(Y,-f^{*}K_{S}) > a(S,-K_{S})$.  Then we are in one of the following situations:
\begin{enumerate}
\item $(Y,-f^{*}K_{S})$ is not adjoint rigid, %i.e., the Iitaka dimension of $a(Y,-f^{*}K_{S})(-f^*K_S) + K_Y$ is $1$,
$a(Y,-f^{*}K_{S}) = 2$, and the image of a general fiber of the Iitaka fibration for $a(Y,-f^{*}K_{S})(-f^*K_S) + K_Y$ is a curve $C$ on $S$ satisfying $-K_{S} \cdot C = 1$.  In this case $f$ is birationally equivalent to the base-change of a quasi-elliptic fibration by a non-separable map to the base curve.  
\item $\ch(k)=2$, $S$ is a weak del Pezzo surface of degree $2$, and $f$ is birationally equivalent to a purely inseparable morphism of degree $2$ from $\mathbb{P}^{2}$ to the anticanonical model of $S$.
We have $a(Y, -f^*K_S) =  3/2$ in this case.
\item $\ch(k)=2$, $S$ is a weak del Pezzo surface of degree $1$, and $f$ is birationally equivalent to a purely inseparable morphism of degree $2$ from the quadric cone $Q$ to the anticanonical model of $S$.
We have $a(Y, -f^*K_S) = 2$ in this case.
\item $\ch(k)=2$, $S$ is a weak del Pezzo surface of degree $1$, and $f$ is birationally equivalent to a non-separable morphism of degree $4$ from $\mathbb{P}^{2}$ to the anticanonical model of $S$.
We have $a(Y, -f^*K_S) = 3/2$ in this case.
\end{enumerate}
\end{theo}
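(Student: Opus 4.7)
The plan is to apply the classification of surfaces with large Fujita invariant, Theorem~\ref{theo:largeainvsurfaces}, to the pair $(Y,L)$, where $L := -f^{*}K_{S}$.  First I would replace $Y$ by a smooth projective resolution, which is harmless since the Fujita invariant and the birational equivalence class of a generically finite map are preserved.  Since $a(S,-K_{S})=1$ and $a(Y,L)>1$, Lemma~\ref{lemm:ainvseparablecover} immediately forces $f$ to be non-separable, so $p := \ch(k)$ divides $\deg(f)$.  Applying Theorem~\ref{theo:largeainvsurfaces} then shows $a(Y,L)\in\{3,\,2,\,3/2\}$ and realizes $Y$ birationally over one of $\mathbb{P}^{2}$, a (smooth or singular) quadric in $\mathbb{P}^{3}$, or a ruled surface, with $L$ pulled back from an explicit line bundle on that model.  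The rest of the argument combines the intersection identity $L^{2} = \deg(f)\cdot d$, where $d := \deg(S)$, with the non-separability of $f$ to pin down $(\deg(f),d)$ and match the case to one of (1)--(4).

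The case $a(Y,L)=3$ is ruled out immediately: $L = \phi^{*}\mathcal{O}(1)$ on $\mathbb{P}^{2}$ gives $L^{2}=1$ and hence $\deg(f) = d = 1$, contradicting the strict increase in the $a$-invariant.  In the non adjoint rigid case with $a(Y,L)=2$, the birational model of $Y$ is a ruled surface on which $L$ restricts to degree $1$ on each fiber $F$ of the ruling; a short computation shows that $f|_{F}$ is birational onto a rational curve $C\subset S$ with $-K_{S}\cdot C = 1$, and these curves move in a one-parameter family.  Lemma~\ref{lemm:lines} then forces $d=1$ and identifies the family with the quasi-elliptic pencil $|-K_{S}|$; since $f$ is non-separable, the induced map between base curves is non-separable, producing case (1).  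The adjoint rigid case with $a(Y,L)=2$ puts $Y$ over a quadric $Q$ with $L^{2}=2$, so $\deg(f)\cdot d = 2$, and non-separability forces $\deg(f)=2$, $d=1$, and $p=2$.  The smooth quadric $\mathbb{P}^{1}\times\mathbb{P}^{1}$ is excluded because its two independent rulings would both push forward via $f$ into the unique quasi-elliptic pencil on $S$, an impossibility; hence $Q$ is the quadric cone and we are in case (3).  Finally, the case $a(Y,L)=3/2$ gives $L = \phi^{*}\mathcal{O}(2)$ on $\mathbb{P}^{2}$, so $L^{2}=4$ and $\deg(f)\cdot d = 4$; non-separability again rules out $\deg(f)=1$, leaving $(\deg(f),d)\in\{(2,2),(4,1)\}$, both forcing $p=2$, and matching cases (2) and (4) respectively.

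The main obstacle will be verifying in cases (2), (3), (4) that $f$ is birationally equivalent to the \emph{specific} inseparable cover of the anticanonical model $S'$ of $S$ described in the theorem, not merely to some inseparable cover of the correct degree.  The plan here is a function-field argument: the normal birational model of $Y$ (namely $\mathbb{P}^{2}$ or the quadric cone $Q$) is linked to $Y$ by a birational map, and composing with $f$ and the anticanonical contraction $S\to S'$ produces a rational map from $\mathbb{P}^{2}$ or $Q$ onto $S'$.  Identifying the integral closure of $\mathcal{O}_{S'}$ inside $k(Y)=k(\mathbb{P}^{2})$ or $k(Q)$ with the coordinate ring of the respective normal model---which is possible because a sufficiently positive multiple of $-K_{S'}$ pulls back to a multiple of $L$, extending the rational map to a morphism---yields the required finite cover onto $S'$ with the predicted degree and inseparability type.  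Together with the preceding case analysis, this exhausts the possibilities (1)--(4).
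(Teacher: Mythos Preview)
Your proposal is essentially correct and follows the same strategy as the paper: deduce non-separability of $f$ from Lemma~\ref{lemm:ainvseparablecover}, invoke Theorem~\ref{theo:largeainvsurfaces} to split into the possible values of $a(Y,L)$, and use the intersection identity $L^{2}=\deg(f)\cdot d$ together with $p\mid\deg(f)$ to pin down $(\deg f,\,d,\,p)$ in each case. Your treatment of the non-adjoint-rigid case via Lemma~\ref{lemm:lines} and your exclusion of $\mathbb{P}^{1}\times\mathbb{P}^{1}$ in the adjoint-rigid $a=2$ case also match the paper's argument.

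The one genuinely weak spot is the final paragraph. Your proposed justification for extending the rational map $Y'\dashrightarrow S'$ to a morphism---that a multiple of $-K_{S'}$ pulls back to the ample class $L'$---does not work as stated: ampleness of the pullback does not by itself rule out base points of the linear series at the indeterminacy locus, so the integral-closure identification cannot be made until after the morphism is known to exist. The paper handles this more directly and you should do the same. Each $\phi$-exceptional curve $E\subset Y$ satisfies $L\cdot E=0$, so $f(E)$ is either a point or a curve of $-K_{S}$-degree $0$; by Lemma~\ref{lemm:(-2)-curve} the latter must be a $(-2)$-curve, hence is contracted by the anticanonical map $\psi:S\to S'$. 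Thus $\psi\circ f:Y\to S'$ is constant on every fibre of the birational contraction $\phi:Y\to Y'$, and since $\phi$ has connected fibres the rigidity lemma produces a morphism $f':Y'\to S'$ with $f'\circ\phi=\psi\circ f$. Finiteness of $f'$ then follows because $-K_{Y'}$ is ample and $\mathbb{Q}$-proportional to $f'^{*}(-K_{S'})$. This replaces your function-field detour in one step.
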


\begin{proof}
First note that if $f: Y \to S$ is separable then $K_{Y} - f^{*}K_{S}$ is an effective divisor so that $a(Y,-f^{*}K_{S}) \leq a(S,-K_{S})$.  Since we are interested in situations where this inequality fails the map $f$ must be non-separable.

Theorem \ref{theo:largeainvsurfaces} classifies the situations where $a(Y,-f^{*}K_{S}) > 1$.  When $Y$ is not adjoint rigid, the rest of the properties in the first sentence of (1) are immediate from Theorem \ref{theo:largeainvsurfaces}.  Since $S$ carries a $1$-dimensional family of $-K_{S}$-lines, Lemma \ref{lemm:lines} shows that $S$ must have degree $1$ and that the curves $C$ on $S$ are singular members of $|-K_{S}|$.  Resolving this linear series, we see that the fibers of the Iitaka fibration on $Y$ map birationally to the fibers of a quasi-elliptic fibration on the blow-up of $S$.  Furthermore $f$ must be non-separable by Lemma \ref{lemm:ainvseparablecover}.  Altogether this proves the second sentence.  

Next we consider the case when $Y$ is adjoint rigid.  Let $\phi: Y \to Y'$ be the map to a weak del Pezzo surface constructed by Theorem \ref{theo:largeainvsurfaces} by running the $(K_{Y} -a(Y, -f^*K_S) f^{*}K_{S})$-MMP. 
Since each $(-1)$-curve we contract while running the MMP will have vanishing intersection against the pushforward of $-f^{*}K_{S}$, we see that $-f^{*}K_{S} = \phi^{*}L'$ for some divisor $L'$ on $Y'$.  This implies that any $\phi$-exceptional curve on $Y$ is either contracted by $f$ or is mapped to a $(-2)$-curve on $S$.  If we let $\psi: S \to S'$ denote the contraction of all the $(-2)$-curves on $S$, then there is a generically finite morphism $f': Y' \to S'$ forming a commuting diagram
\begin{equation*}
\xymatrix{ Y \ar[d]^{\phi} \ar[r]^{f} &  S \ar[d]^{\psi}  \\
Y' \ar[r]^{f'} & S' }
\end{equation*}
From the equation
\begin{equation*}
K_{Y'} \sim_{\mathbb{Q}} a(Y,-f^{*}K_{S})\phi_{*}f^{*}K_{S} \sim_{\mathbb{Q}} a(Y,-f^{*}K_{S})\phi_{*}f^{*}\psi^{*}K_{S'}
\end{equation*}
we see that $K_{Y'} \sim_{\mathbb{Q}} a(Y,-f^{*}K_{S})f'^{*}K_{S'}$.

Theorem \ref{theo:largeainvsurfaces} shows that there are three types of adjoint rigid surfaces with $a$-invariant larger than $1$.  We argue separately for each case:
\begin{itemize}
\item Case 1: $a(Y,-f^{*}K_{S}) = 3$.  Theorem \ref{theo:largeainvsurfaces} shows that there is a birational morphism $g: Y' \to \mathbb{P}^{2}$ such that $-f'^{*}K_{S'} \sim g^{*}H$ where $H$ is the hyperplane class on $\mathbb{P}^{2}$.  As explained above this divisor is also proportional to $K_{Y'}$.  Thus the only possibility is that $g$ is an isomorphism, $Y' \cong \mathbb{P}^{2}$, and $-f'^{*}K_{S'} \sim H$.  Then $\deg(f') \cdot (-K_{S'})^{2} = (-f'^{*}K_{S'})^{2} = 1$ so that $f'$ is birational, a contradiction.

\item Case 2: $a(Y,-f^{*}K_{S}) = 2$.  Theorem \ref{theo:largeainvsurfaces} shows that there is a birational morphism $g: Y' \to T$ where $T \cong \mathbb{P}^{1} \times \mathbb{P}^{1}$ or the quadric cone $Q$ such that $-f'^{*}K_{S'} \sim g^{*}H$ where $H$ is the restriction of the hyperplane class on $\mathbb{P}^{3}$.  As explained above this divisor is also proportional to $K_{Y'}$.  Thus we must have either $Y' \cong \mathbb{P}^{1} \times \mathbb{P}^{1}$ or $\mathbb{F}_{2}$ and $-2f'^{*}K_{S'} \sim K_{Y'}$. 
When $Y'$ is $\mathbb F_2$, we replace $Y'$ by the quadric cone $Q$. 
We see that $\deg(f') \cdot (-K_{S'})^{2} = (-f'^{*}K_{S'})^{2} = 2$.  The only possibility is that $f'$ has degree $2$ and that $S'$ is a singular degree $1$ weak del Pezzo.  In particular, our ground field must have characteristic $2$.

Suppose for a contradiction that $Y = \mathbb{P}^{1} \times \mathbb{P}^{1}$.  Then the calculation above shows that each family of lines on $Y$ maps to a one-dimensional family of rational curves on $S'$ of anticanonical degree $1$.  Furthermore these two families cannot coincide (since their numerical classes on $S'$ are different).  But by Lemma \ref{lemm:lines} it is only possible for $S$ to carry one such family, showing that such a map cannot exist.

\item Case 3:  $a(Y,-f^{*}K_{S}) = \frac{3}{2}$.  Theorem \ref{theo:largeainvsurfaces} shows that there is a birational morphism $g: Y' \to \mathbb{P}^{2}$ such that $-f'^{*}K_{S'} \sim g^{*}2H$ where $H$ is the hyperplane class on $\mathbb{P}^{2}$.  As explained above this divisor is also proportional to $K_{Y'}$. Thus the only possibility is that $g$ is an isomorphism, $Y' \cong \mathbb{P}^{2}$ and $-f'^{*}K_{S'} \sim 2H$.  Then $\deg(f') \cdot (-K_{S'})^{2} = (-f'^{*}K_{S'})^{2} = 4$.  Thus we see that $S'$ must be a singular weak del Pezzo surface of degree $2$ or $1$ with Picard rank $1$ and that $f'$ must be non-separable and must have degree $2$ or $4$ respectively.  In particular, our ground field must have characteristic $2$.
\end{itemize}
\end{proof}

Using our earlier classification of pathological del Pezzo surfaces, we can give an even more precise description of the possible dominant morphisms which increase the $a$-invariant.

\begin{coro} \label{coro:typesandcases}
Let $S$ be a weak del Pezzo surface and suppose that $f: Y \to S$ is a dominant generically finite morphism such that $a(Y,-f^{*}K_{S}) > a(S,-K_{S})$.
\begin{enumerate}
\item If we are in the setting of Theorem \ref{theo:dominantaclassificationsurfaces}.(1), then $S$ has Type 1.
\item If we are in the setting of Theorem \ref{theo:dominantaclassificationsurfaces}.(2), then $S$ has Type 2.
\item If we are in the setting of Theorem \ref{theo:dominantaclassificationsurfaces}.(3), then $S$ has Type 3.
\item If we are in the setting of Theorem \ref{theo:dominantaclassificationsurfaces}.(4), then $S$ has Type 1 or Type 3.
\end{enumerate}
\end{coro}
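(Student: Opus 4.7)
My plan is to prove the four parts by matching the geometric conclusions of Theorem~\ref{theo:dominantaclassificationsurfaces} with the equivalent characterizations of the three types recorded in Claims~\ref{clai:type1}, \ref{clai:type2}, and~\ref{clai:type3}. For parts (1)--(3) this will be a direct translation. In case (1), Theorem~\ref{theo:dominantaclassificationsurfaces}(1) already records that $S$ has degree $1$ and that resolving the basepoint of $|-K_{S}|$ yields a quasi-elliptic fibration, which is precisely condition (3) of Claim~\ref{clai:type1}. In case (2), Theorem~\ref{theo:dominantaclassificationsurfaces}(2) supplies the purely inseparable degree-$2$ morphism $\mathbb{P}^{2} \to S'$ with $-K_{S'}$ pulling back to $\mathcal{O}(2)$, which is condition (4) of Claim~\ref{clai:type2}. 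In case (3), Theorem~\ref{theo:dominantaclassificationsurfaces}(3) supplies the purely inseparable degree-$2$ morphism $Q \to S'$ from the quadric cone with $-K_{S'}$ pulling back to $\mathcal{O}_{Q}(1)$, which is condition (3) of Claim~\ref{clai:type3}.

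The main obstacle is case (4). Here Theorem~\ref{theo:dominantaclassificationsurfaces}(4) only produces a non-separable finite morphism $f' \colon \mathbb{P}^{2} \to S'$ of degree $4$ in characteristic $2$ with $(f')^{*}(-K_{S'}) \cong \mathcal{O}(2)$, and neither a purely inseparable factorization nor a direct map to the quadric cone is built into the data. My strategy will be to show that every member of $|-K_{S'}|$ is singular (as a scheme). By the correspondence between $|-K_{S}|$ and $|-K_{S'}|$ under the crepant contraction $S \to S'$, this will force every member of $|-K_{S}|$ to be singular, so by Claim~\ref{clai:type1}(2) the surface $S$ is of Type 1; in characteristic $2$, Proposition~\ref{prop:type3istype1} then also makes $S$ of Type 3, so either half of the disjunction in (4) is delivered.

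To see that every $E \in |-K_{S'}|$ is singular, I will use that $f'$ is automatically finite (any generically finite morphism from $\mathbb{P}^{2}$ to a normal projective surface is finite, because $\mathbb{P}^{2}$ admits no morphisms contracting curves). Then $(f')^{-1}(E)$ is a divisor on $\mathbb{P}^{2}$ of class $\mathcal{O}(2)$, i.e.~a (possibly reducible or non-reduced) plane conic, and its support surjects onto $E$. Consequently some reduced irreducible component $C \subseteq (f')^{-1}(E)_{\mathrm{red}}$ maps onto $E$, and every such $C$ is a line or an irreducible (possibly singular) conic whose normalization is $\mathbb{P}^{1}$. If $E$ were smooth, then having arithmetic genus $1$ it would be a smooth elliptic curve, so I would obtain a non-constant morphism $\mathbb{P}^{1} \to E$; any such morphism factors as an iterated absolute Frobenius $\mathbb{P}^{1} \to \mathbb{P}^{1}$ followed by a separable surjection $\mathbb{P}^{1} \to E$, and the latter is excluded by Riemann--Hurwitz. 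The remaining subtlety to verify carefully is that singularity of $E$ on $S'$ always transports to a singular member of $|-K_{S}|$ on the resolution, even when the only singular point of $E$ lies over a rational double point of $S'$ -- in that case the pullback acquires an exceptional $(-2)$-curve and is singular as a reducible scheme, so the conclusion of Claim~\ref{clai:type1}(2) still applies.
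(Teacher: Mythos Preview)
Your treatment of parts (1)--(3) matches the paper's proof: each case is handled by reading off the relevant characterization from Claims~\ref{clai:type1}, \ref{clai:type2}, \ref{clai:type3}.

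For part (4) your argument is correct but takes a genuinely different route from the paper. The paper observes that the lines in $\mathbb{P}^{2}$ push forward under $f'$ to a $2$-dimensional family of $-K_{S}$-conics on $S'$, and then invokes Corollary~\ref{coro:lowdegreeexpectdimanticanonical} to conclude that $S$ has Type~1, 2, or 3 (and degree~1 rules out Type~2). You instead pull back each anticanonical divisor $E \in |-K_{S'}|$ to a plane conic, find a rational component dominating $E$, and use the nonexistence of a separable cover $\mathbb{P}^{1} \to E$ to force $E$ to be singular; you then transport this to $|-K_{S}|$ and invoke Claim~\ref{clai:type1}(2) directly. Your approach is arguably cleaner in that it sidesteps any discussion of whether $f'|_{\ell}$ is birational onto its image and goes straight to the defining property of Type~1, whereas the paper's route reuses the machinery of Section~\ref{sect:expdimlowdegree}.

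One warning: drop the appeal to Proposition~\ref{prop:type3istype1}. That proposition is proved \emph{after} this corollary and its proof explicitly cites Corollary~\ref{coro:typesandcases}, so invoking it here is circular. Fortunately you do not need it: once you have shown $S$ has Type~1, the disjunction ``Type~1 or Type~3'' is already established, so simply delete that sentence.
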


\begin{proof}
Theorem \ref{theo:dominantaclassificationsurfaces} gives 4 possible situations.  In Case 1 (respectively Case 2, Case 3) it follows from Claim \ref{clai:type1} (resp.~Claim \ref{clai:type2}, Claim \ref{clai:type3}) that $S$ has Type 1 (resp.~Type 2, Type 3).  It only remains to consider Case 4.

Let $S'$ denote the anticanonical model of $S$ and suppose there is a non-separable degree $4$ morphism $f: \mathbb{P}^{2} \to S'$ such that $f^{*}(-K_{S}) \cong \mathcal{O}(2)$.  Then the image in $S'$ of the lines on $\mathbb{P}^{2}$ yields a $2$-dimensional family of $-K_{S}$-conics.  We then conclude by Corollary \ref{coro:lowdegreeexpectdimanticanonical}.
\end{proof}

To finish off the classification, we make one final remark:

\begin{prop} \label{prop:type3istype1}
The Type 3 surfaces are exactly the same as the Type 1 surfaces in characteristic $2$.
\end{prop}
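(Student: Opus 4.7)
The inclusion of Type 3 surfaces in the Type 1 surfaces (in characteristic $2$) has already been established in the paragraph following Claim \ref{clai:type3}. The plan is therefore to prove the converse: every Type 1 weak del Pezzo surface in characteristic $2$ is also Type 3.

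Let $S$ be such a surface and let $S'$ denote its anticanonical model. Write the defining equation of $S' \subset \mathbb{P}(1,1,2,3)$, with coordinates $x_0, x_1, z, w$ of weights $1, 1, 2, 3$, in the form
\[
w^2 + w F_3(x_0, x_1, z) + F_6(x_0, x_1, z) = 0.
\]
By Claim \ref{clai:type3}, $S$ is Type 3 if and only if the morphism $g : S' \to Q = \mathbb{P}(1,1,2)$ defined by $|{-2K_{S'}}|$ is purely inseparable, which in characteristic $2$ is equivalent to the vanishing $F_3 = 0$. So the goal reduces to deducing $F_3 = 0$ from the Type 1 hypothesis.

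By Claim \ref{clai:type1}, Type 1 says that after blowing up the basepoint of $|{-K_{S'}}|$ we obtain a quasi-elliptic fibration $\widetilde{S} \to \mathbb{P}^1$, and the exceptional divisor of the blowup serves as a section. I will then put this fibration into long Weierstrass form: restricting the equation of $S'$ to the fiber over $(1:t) \in \mathbb{P}^1$ and choosing the affine chart $x_0 = 1$, after a rescaling of $z$ and $w$ that normalises the coefficient of $z^3$, one obtains
\[
w^2 + a_1(t)\, z w + a_3(t)\, w = z^3 + a_2(t)\, z^2 + a_4(t)\, z + a_6(t),
\]
with $a_1(t) = c_{101} + c_{011} t$ and $a_3(t) = c_{300} + c_{210} t + c_{120} t^2 + c_{030} t^3$, where the $c_{ijk}$ are the coefficients of $F_3$ in the monomial basis $\{x_0^3,\, x_0^2 x_1,\, x_0 x_1^2,\, x_1^3,\, x_0 z,\, x_1 z\}$.

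Finally I will invoke the classical characterization of quasi-elliptic Weierstrass fibrations in characteristic $2$: such a family has cuspidal generic fiber if and only if $a_1 \equiv 0$ and $a_3 \equiv 0$ as polynomials in $t$. This follows from a direct smoothness computation, since in characteristic $2$ the partial derivative of the Weierstrass cubic with respect to $w$ is $a_1 z + a_3$, whose identical vanishing on each fiber is equivalent to both $a_1$ and $a_3$ vanishing on the base. Since the six polynomial coefficients of $a_1(t)$ and $a_3(t)$ together exhaust the six coefficients of $F_3$, this forces $F_3 = 0$. Hence $g$ is purely inseparable and $S$ is Type 3. The only subtlety in the argument is the correct setup of weighted projective coordinates and normalisations; once that is in hand, the computation is direct.
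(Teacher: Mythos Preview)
Your approach is correct and genuinely different from the paper's. The paper proves the converse inclusion by constructing an explicit breaking map: it base-changes the quasi-elliptic fibration $\widetilde{S} \to \mathbb{P}^1$ along Frobenius, obtains a smooth surface $Y$ with a $\mathbb{P}^1$-fibration and a section $D$ of self-intersection $-2$, and then checks via an $a$-invariant computation that $(Y, -f^*K_S)$ is adjoint rigid with $a(Y,-f^{*}K_{S})=2$. This forces the situation into Case~(3) of Theorem~\ref{theo:dominantaclassificationsurfaces}, whence $S$ is Type~3 by Corollary~\ref{coro:typesandcases}. Your argument instead works directly with the sextic equation in $\mathbb{P}(1,1,2,3)$ and reduces both the Type~1 and Type~3 conditions to the same explicit vanishing $F_3 = 0$. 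This is more elementary and self-contained --- it uses none of the $a$-invariant machinery of Section~\ref{sec: fujita} --- and could in principle be stated immediately after the Type definitions in Section~\ref{subsec:examples}. The paper's route, by contrast, illustrates how the classification of breaking maps feeds back into the structure theory, which is thematically aligned with the rest of Section~\ref{sect:increaseainv}.

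One point deserves tightening. Your one-line justification of the Weierstrass criterion is not quite complete as stated. The identical vanishing of $\partial_w F = a_1 z + a_3$ on a fiber is certainly \emph{sufficient} for that fiber to be cuspidal (the equation becomes $w^2 = \text{cubic}(z)$, whose unique singular point in characteristic~$2$ has tangent cone a perfect square), but you have not explained why it is \emph{necessary}: a priori a fiber could be singular with $\partial_w F$ vanishing only at the singular point rather than identically. The missing observation is that when $a_1 \neq 0$ the quadratic part of $F$ at the singular point contains the cross-term $a_1 uv$ and is therefore nondegenerate, so the singularity is nodal rather than cuspidal; and when $a_1=0$ one has $\Delta = a_3^4$, so singularity forces $a_3=0$. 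Alternatively you can simply cite this equivalence as classical (Tate's algorithm, or the treatment of genus-one fibrations in characteristic~$2$ in Bombieri--Mumford).
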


\begin{proof}
As discussed earlier, a Type 3 surface has Type 1.  Indeed, by definition a Type 3 surface $S$ has an anticanonical model $S'$ which admits a purely inseparable degree $2$ map $f: Q \to S'$ from the quadric cone.  The images of the lines on $Q$ are $-K_{S}$-lines on $S'$, and we conclude that $S$ has Type 1 by Lemma \ref{lemm:lines}.

Conversely, we show that every Type 1 surface in characteristic $2$ has Type 3.  Let $S$ be a Type 1 surface in characteristic $2$, let $\phi: \widetilde{S} \to S$ be the blow-up of the basepoint of $|-K_{S}|$ with exceptional divisor $E$, and let $\pi: \widetilde{S} \to \mathbb{P}^{1}$ be the resolution of the rational map defined by $|-K_{S}|$.  Consider the diagram
\begin{equation*}
\xymatrix{
Y \ar[d]_{p} \ar[r]^{g} &  \widetilde{S} \ar[d]_{\pi} \\
\mathbb{P}^{1} \ar[r]^{F} & \mathbb{P}^{1} }
\end{equation*}
where $F$ denotes the Frobenius map and $Y$ is the normalization of $\widetilde{S} \times_{\mathbb{P}^{1}} \mathbb{P}^{1}$.  Then $g$ is a purely inseparable degree $2$ morphism, $Y$ is smooth, and the general fiber of $p$ is isomorphic to $\mathbb{P}^{1}$.  Set $D = g^{*}E$.  Since $E$ is a section of $\pi$, $D$ is a section of $p$.  Note that $D^{2} = g^{*}E^{2} = -2$ so that $D$ is a $(-2)$-curve on $Y$.

Since the fibers of $p$ have intersection $-1$ against $f^{*}K_{S}$, we see that $a(Y,-f^{*}K_{S}) \geq 2$ where $f: Y \to S$ denote the composition of $g$ and the birational map to $S$.  Theorem \ref{theo:dominantaclassificationsurfaces} then shows that the equality must be attained.  Furthermore, note that $(K_{Y} - 2f^{*}K_{S}) \cdot D = 0$.  We conclude that $K_{Y} - 2f^{*}K_{S}$ is adjoint rigid.  Thus we must be in Case (3) of Theorem \ref{theo:dominantaclassificationsurfaces}.  Corollary \ref{coro:typesandcases} shows that $S$ has Type $3$.
\end{proof}

\subsubsection{Breaking maps and rational curves}

Finally let us remark that the existence of breaking maps implies the existence of families of rational curves with larger than the expected dimension.  In other words, the compatibility we have found between dominant covers with larger $a$-invariant and the presence of families with too large dimension is not just a coincidence.

\begin{prop}
\label{prop:higherthanexpected}
Let $X$ be a smooth weak Fano variety defined over $k$ and let $f : Y \to X$ be a breaking map from a smooth projective variety $Y$.  Suppose that there is a component $M$ of $\overline{M}_{0,0}(Y)$ generically parameterizing a dominant family of rational curves $g : \mathbb{P}^{1} \to Y$ such that $\deg(g^*(K_{Y} - a(Y, -f^*K_X)f^*K_X)) = 0$. Then the family of rational curves on $X$ obtained by applying $f$ to the stable maps in $M$ has higher than expected dimension.  
\end{prop}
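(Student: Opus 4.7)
The plan is to convert the breaking inequality $a := a(Y, -f^*K_X) > 1$ directly into a dimension excess, using the hypothesis on $\deg g^*(K_Y - af^*K_X)$. First I would let $\beta_Y$ denote the numerical class of a general curve parametrized by $M$ and set $\beta_X := f_*\beta_Y$; the projection formula together with the hypothesis gives the key identity
\[
-K_Y \cdot \beta_Y \;=\; a\,(-f^*K_X\cdot \beta_Y) \;=\; a\,(-K_X\cdot \beta_X).
\]
Since $f$ is generically finite, $\dim Y = \dim X$; and since $M$ is dominant on $Y$ and $f$ is surjective, the induced family of curves on $X$ is dominant, so the general curve of class $\beta_X$ is not contracted by the anticanonical morphism. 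As $-K_X$ is big and nef, this forces $-K_X \cdot \beta_X > 0$.

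Next I would combine the standard lower bound $\dim M \geq \dim Y + (-K_Y\cdot\beta_Y) - 3$ from deformation theory of the Kontsevich space with the identity above to obtain
\[
\dim M \;\geq\; \dim X + a\,(-K_X\cdot\beta_X) - 3.
\]
The composition rule $[g] \mapsto [f\circ g]$ defines a rational map $\Phi\colon M \dashrightarrow \overline{M}_{0,0}(X,\beta_X)$ that is well-defined on the dense open where $f\circ g$ is non-constant. The key technical step is to show $\Phi$ has generically finite fibers: any two lifts of a fixed $h\colon \mathbb{P}^1 \to X$ to $Y$ correspond to distinct sections of the generically finite morphism $\mathbb{P}^1\times_X Y \to \mathbb{P}^1$, and such sections are bounded in number by $\deg f$. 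Hence the closure $N$ of the image of $\Phi$ satisfies $\dim N = \dim M$, and $N$ sits inside some component of $\overline{M}_{0,0}(X,\beta_X)$ generically parametrizing a dominant family (since $M$ does and $f$ is surjective).

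Finally, comparing with the expected dimension on $X$,
\[
\dim N - \bigl(\dim X + (-K_X\cdot \beta_X) - 3\bigr) \;\geq\; (a-1)(-K_X\cdot \beta_X) \;>\; 0,
\]
which is exactly the asserted excess. The main (mild) obstacle is the generic finiteness of $\Phi$; the base-change argument sketched above works uniformly in any characteristic because one only needs finiteness, not étaleness, of $\mathbb{P}^1 \times_X Y \to \mathbb{P}^1$. Everything else is a clean dimension count driven by the hypothesis $\deg g^*(K_Y - af^*K_X)=0$ and the projection formula.
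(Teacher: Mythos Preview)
Your proof is correct and follows essentially the same argument as the paper: the expected-dimension lower bound for $M$ on $Y$, together with the identity $-K_Y\cdot\beta_Y = a\,(-K_X\cdot\beta_X)$ coming from the hypothesis, immediately yields $\dim M > (-K_X\cdot\beta_X) + \dim X - 3$. Your added verification that $\Phi$ is generically finite (via counting sections of $\mathbb{P}^1\times_X Y \to \mathbb{P}^1$) is more careful than the paper, which simply stops at the inequality for $\dim M$ and leaves the identification of the resulting family on $X$ implicit.
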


\begin{proof}
Since any component of $\overline{M}_{0,0}(Y)$ has at least the expected dimension, we have
\[
\dim \, M \geq \deg(-g^*K_Y) + \dim \, Y -3.
\]
On the other hand since we have $$\deg(-g^{*} K_Y) =  a(Y, -f^*K_X)\deg(-g^{*}f^*K_X)$$ and $a(Y, -f^*K_X) > 1$, we conclude that
\[
\dim \, M  > \deg(-g^*f^*K_X) + \dim \, X -3.
\]
\end{proof}

Since for every breaking map $f: Y \to S$ in Theorem~\ref{theo:dominantaclassificationsurfaces} the surface $Y$ admits infinitely many families of free curves satisfying the assumption of Proposition~\ref{prop:higherthanexpected}, we see that each surface $S$ in the theorem admits infinitely many families of rational curves with higher than the expected dimension.

%\nocite{*}
\bibliographystyle{alpha}
\bibliography{balancedVII}

\end{document}